\newcommand{\SCC}{stable weighted graph }
\newtheorem{theorem}{Theorem}
\newtheorem{thm}{Theorem}[section]
\newtheorem{prop}[thm]{Proposition}
\newtheorem{cor}[thm]{Corollary}
\newtheorem{lem}[thm]{Lemma}
\newtheorem{defn}[thm]{Definition}
\newtheorem{remark}[thm]{Remark}
\def\endproof{\relax\ifmmode\expandafter\endproofmath\else
  \unskip\nobreak\hfil\penalty50\hskip.75em\hbox{}\nobreak\hfil\bull
  {\parfillskip=0pt \finalhyphendemerits=0 \bigbreak}\fi}
\def\endproofmath$${\eqno\bull$$\bigbreak}
\def\bull{\vbox{\hrule\hbox{\vrule\kern3pt\vbox{\kern6pt}\kern3pt\vrule}\hrule}}
\newcommand{\Aut}{\mathrm{Aut}}
\newcommand{\Q}{\mathbb{Q}}
\newcommand{\PP}{\mathrm{P}}
\newcommand{\Z}{\mathbb{Z}}
\newcommand{\Ccal}{\mathcal{C}}
\newcommand{\Acal}{\mathcal{A}}
\newcommand{\QQ}{\mathrm{Q}}
\newcommand{\sig}{\sigma}
\newcommand{\Pcal}{\mathcal{P}}
\newcommand{\ov}{\widehat}
\newcommand{\Det}{\mathrm{Det}}
\newcommand{\rank}{\mathrm{rank}}
\newcommand{\z}{\mathrm{\bf{z}}}
\newcommand{\w}{\mathrm{\bf{w}}}
\newcommand{\p}{\mathrm{\bf{q}}}
\newcommand{\q}{\mathrm{\bf{p}}}
\newcommand{\ka}{\kappa}
\newcommand{\kring}{\ka^*\left(\Modbar_{g,n}\right)}
\newcommand{\kringc}{\ka^*_c\left(\Modbar_{g,n}\right)}
\newcommand{\Mgnbar}{\Modbar_{g,n}}
\newcommand{\Mgn}{\Mod_{g,n}}
\newcommand{\lra}{\longrightarrow}
\newcommand{\ra}{\rightarrow}
\newcommand{\Sig}{\Sigma}
\newcommand{\Mod}{\mathcal{M}}
\newcommand{\Modbar}{{\overline{\mathcal{M}}}}
\newcommand{\Ker}{\mathrm{Ker}}
\begin{document}

\title{On the structure of the kappa-ring}%
\author{Eaman Eftekhary}%
\address{School of Mathematics, Institute for Research in Fundamental Science (IPM),
P. O. Box 19395-5746, Tehran, Iran}%
\email{eaman@ipm.ir}
\author{Iman Setayesh}%
\address{School of Mathematics, Institute for Research in Fundamental Science (IPM),
P. O. Box 19395-5746, Tehran, Iran}%
\email{setayesh@ipm.ir}

\begin{abstract}
We obtain  lower bounds on the
 rank of the kappa ring $\kring$ of the Delign-Mumford 
compactification of the moduli space of curves in different degrees. For this purpose,
we introduce a quotient $\kappa_c^*(\Mgnbar)$ of $\kring$, and show that
the rank of this latter ring in degree $d$ is bounded 
below by  $|\PP(d,3g-2+n-d)|$ where $\PP(d,r)$ denotes the set of partitions 
of the positive integer $d$ into at most $r$ parts. In codimension 
$1$ (i.e. $d=3g-4+n$) we show that the rank of $\kappa_c^*(\Mgnbar)$ 
is equal to $n-1$ for $g=1$, and 
is  equal to $$\left\lceil \frac{(n+1)(g+1)}{2}\right\rceil-1$$
for $g>1$. Furthermore, in codimension $e=3g-3+n-d$, the rank of 
$\kappa_c^*(\Mgnbar)$ (as $g$ and $e$ remain fixed and $n$ grows large)
is asymptotic to 
$$\frac{{{n+e}\choose e}{{g+e}\choose e}}{(e+1)!}.$$
\end{abstract}
\maketitle
\section{Introduction}\label{sec:intro}
Let $\epsilon=\pi_{g,n}^1 : \overline{\mathcal{M}}_{g,n+1} \to 
\overline{\mathcal{M}}_{g,n}$ 
denote the universal curve over the moduli space $\Mgnbar$ 
of stable 
genus $g$,  $n$-pointed curves. Throughout this paper, we will assume that 
$n>0$. The psi and kappa classes in the Chow ring of
 $\overline{\mathcal{M}}_{g,n}$ are defined as follows~\cite{M}.
Let $\mathbb{L}_{i} \to \overline{\mathcal{M}}_{g,n+1}$ 
denote the cotangent line bundle
 over $\Modbar_{g,n+1}$ whose fiber over a given point (which is an $(n+1)$-pointed 
 genus $g$ curve) is the cotangent space over the $i^{th}$ marked point. 
 The $i^{th}$ psi-class $\psi_i$ 
 over $\overline{\mathcal{M}}_{g,n+1}$ is then defined by
$$\psi_i= c_1(\mathbb{L}_{i})\in A^{1}(\overline{\mathcal{M}}_{g,n+1}).$$
Correspondingly, the $i$-th kappa-class $\kappa_i$ is defined via
$$\kappa_i = \epsilon_{*}(\psi_{n+1}^{i+1}) \in A^{i}(\overline{\mathcal{M}}_{g,n}).$$
The push forwards of the $\kappa$ and $\psi$ classes from the strata generates the 
tautological ring $R^*(\overline{\mathcal{M}}_{g,n})$ \cite{F,GP}.\\

The $\kappa$, $\psi$ and tautological classes over an open subset $\mathcal{U}$
of the moduli space  $\Mgnbar$, and in particular over  the smooth part $\Mgn$ and 
the moduli $\Mgn^c$ of curves 
of compact type, are defined by restricting the respective classes from 
$\Mgnbar$ to the corresponding open subset $\mathcal{U}\subset \Mgnbar$. 
The kappa ring $\kring$ of the moduli space of curves is  the subring of the 
tautological ring $R^*(\Mgnbar)$ generated by the kappa classes 
$\kappa_1,\kappa_2,...$ over $\Q$. One may define the kappa ring 
$\kappa^*(\mathcal{U})$ in a similar way.
It was observed by Pandharipande~\cite{Rahul-k1,Rahul-k2}
that if one restricts attention to the moduli space $\Mgn^c$, 
the structure of the kappa ring may be completely determined 
using a combination of combinatorial arguments and localization ideas.
Pandharipande shows, consequently, that the rank of the ring 
$\kappa^*(\Mgn^c)$ in degree $d$ is equal to $|\PP(d,2g-2+n-d)|$, where 
$\PP(d,r)$ denotes the set of partitions of $d$ into at most $r$ parts.\\

We will use some combinatorial observations to obtain a number of
 lower bounds on the rank of $\kring$. For this purpose,
associated with every stable weighted graph (see the second section for 
the definition) one may define a natural cycle in the Chow ring 
of  $\Mgnbar$. Correspondingly, we may define 
the {\emph{combinatorial tautological ring}} to be the quotient of the tautological 
ring obtained by setting trivial the classes which integrate trivially over all such  
{\emph{combinatorial cycles}}. It also makes sense to talk about the combinatorial
kappa rings. These will be denoted by $R^*_c(\Mgnbar)$ and 
$\kappa^*_c(\Mgnbar)$, respectively.
It is then implied from the work of Pandharipande that 
$\kappa^*_c(\Mgn^c)=\kappa^*(\Mgn^c)$. \\

Since $\kappa^*_c(\Mgnbar)$ is naturally a quotient of $\kring$, the rank of the 
former gives a lower bound on the rank of the latter. In this paper, we prove 
a number of theorems about the rank of $\kappa_c^*(\Mgnbar)$ in different degrees.
In particular we show
\begin{theorem}\label{thm:main-2}
The rank of the combinatorial kappa 
ring $\kappa^*_c(\Mgnbar)$  in degree $d$ is bounded below by $|\PP(d,3g-2+n-d)|$.
\end{theorem}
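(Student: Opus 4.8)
The plan is to compute the rank of $\kringc$ in degree $d$ directly as the rank of an intersection pairing, and to lower-bound that rank by exhibiting an explicit nonsingular submatrix indexed by partitions. By the very definition of $\kringc$ as the quotient of $\kring$ by the classes integrating trivially against all combinatorial cycles, the rank in degree $d$ equals the rank of the matrix pairing degree-$d$ kappa monomials against fundamental classes of stable weighted graphs. A codimension-$d$ class can pair nontrivially with a combinatorial cycle only if the cycle has dimension $d$, i.e. only if the underlying graph has exactly $e=3g-3+n-d$ edges. So the goal is to produce, for every partition $\lambda$ of $d$ into at most $3g-2+n-d$ parts, a kappa monomial $\kappa_\lambda=\prod_i\kappa_{\lambda_i}$ and a stable weighted graph $\Gamma_\lambda$ with $e$ edges, and to show the square matrix $M_{\lambda\mu}=\int_{[\Gamma_\mu]}\kappa_\lambda$ is nonsingular; since $\kringc$ is a quotient of $\kring$, this forces $\rk\kringc\ge|\PP(d,3g-2+n-d)|$.

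To evaluate the entries I would use the standard restriction formula under which $\kappa_a$ pulls back to a boundary stratum $\prod_v\Modbar_{g_v,n_v}$ additively, as $\sum_v\kappa_a^{(v)}$, together with the comparison $\kappa_a=\pi^*\kappa_a+\psi^a$ along forgetful maps. Expanding $\kappa_\lambda|_{[\Gamma_\mu]}$ and integrating then writes each $M_{\lambda\mu}$ as a sum of products of vertex integrals over the $\Modbar_{g_v,n_v}$, which on rational vertices reduce to the positive multinomial numbers $\int_{\Modbar_{0,m}}\psi_1^{a_1}\cdots\psi_m^{a_m}=\binom{m-3}{a_1,\dots,a_m}$ (automorphisms of $\Gamma$ contribute only harmless nonzero rational factors). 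The additivity of the kappa classes is exactly what lets each vertex absorb degree into its own parts, so the number of parts a given stratum can carry equals the sum over vertices of the per-vertex budgets $3g_v-2+n_v-d_v$.

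The combinatorial heart of the argument is the following bookkeeping, which I would establish first: for a connected stable graph $\Gamma$ with $V$ vertices and $E$ edges, these per-vertex budgets sum to $(3g-2+n-d)-b_1(\Gamma)$, where $b_1(\Gamma)=E-V+1$. Hence the full budget $3g-2+n-d$ is attained precisely on trees, and this is exactly the source of the gain over Pandharipande's compact-type bound $2g-2+n-d$: inside $\Mgnbar$ each vertex $\Modbar_{g_v,n_v}$ may be further degenerated using non-separating nodes, unavailable in compact type, and these recover the extra $\sum_v g_v=g$ parts available on a tree. Concretely I would take $\Gamma_\lambda$ to be a tree that distributes the parts of $\lambda$ one per vertex—splitting off a rational tail for each part while carrying the genus and the marked points along a spine—and prove nondegeneracy by ordering partitions by a suitable partial order (refinement/dominance) and checking that $M$ is triangular with each diagonal entry a product of nonzero genus-$0$ multinomials.

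To make the per-vertex budgets rigorous the argument should run by induction on $g$. The base case $g=0$ is exactly Pandharipande's computation of $\kappa^*(\Modbar_{0,n})$, of rank $|\PP(d,n-2-d)|=|\PP(d,3g-2+n-d)|$, and the inductive step would degenerate a positive-genus vertex through the non-separating boundary $\xi\colon\Modbar_{g-1,n+2}\to\Modbar_{g,n}$, along which kappa classes again restrict additively. The hard part will be precisely this step and its interaction with the partitions having the maximal number of parts $3g-2+n-d$: those partitions force $b_1(\Gamma)=0$, hence genuinely require genus on the vertices, yet the only way to realize a vertex's full budget is through an internal non-separating degeneration, which raises $b_1$ and costs one unit. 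Thus the induction yields $3g-3+n-d$ for free, and recovering the final unit—the last of the extra $g$ parts—will require simultaneously tracking the $\psi$-class at the glued node and the triangular structure of $M$. Verifying that the diagonal entries survive this recovery, i.e. the non-vanishing of the resulting multinomial combinations, is where the real work lies.
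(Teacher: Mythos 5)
Your overall skeleton coincides with the paper's (pair degree-$d$ classes against the cycles of stable weighted graphs with $e=3g-3+n-d$ edges, order partitions by refinement, observe the pairing matrix is triangular, and exhibit a nonzero diagonal), but there is a genuine gap at the decisive step: \emph{realizability}. You must produce, for every $\lambda\in\PP(d,3g-2+n-d)$, an admissible graph $\Gamma_\lambda$ whose positive-dimensional vertices have dimensions exactly the parts of $\lambda$, and your tree recipe cannot do this in general. On a tree $b_1=0$, so $\sum_v g_v=g$; but the only stable dimension-zero vertex type is $(0,3)$ (there is no stable positive-genus vertex of dimension zero), so all the genus must be absorbed by the $k$ part-carrying vertices, and a vertex of dimension $\lambda_i$ can carry genus at most $\lfloor(\lambda_i+2)/3\rfloor$. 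Take $g=2$, $d=1$, $\lambda=(1)$: the unique positive-dimensional vertex is $(0,4)$ or $(1,1)$ and carries genus at most $1$, so no tree with $e=n+2$ edges realizes $\lambda$; one is forced into graphs with $b_1>0$, i.e.\ non-separating edges. (Your bookkeeping identity, budgets summing to $(3g-2+n-d)-b_1(\Gamma)$, is correct, but it only shows that partitions with the \emph{maximal} number of parts force trees; it does not make trees sufficient for the other columns of your matrix, and in fact they are not.) This is exactly where the paper does its real work: Lemma 2.5 constructs, by induction on the number of vertices, a connected stable weighted graph with any prescribed number $e\geq m-1$ of edges from a weight multi-set with $\sum_i m_i\geq 2e$, and Proposition 3.1 distributes the genus — genus-one vertices when $d\leq n+2g-2$, and vertices of genus $\lfloor a_i/3\rfloor+1-\epsilon_i$ together with the counting inequality $\sum_i\lfloor a_i/3\rfloor\geq r$ when $d=2g-2+n+r$ with $0<r<g$. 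Your fallback, an induction on $g$ through the non-separating boundary, is left open precisely at this point, as you yourself concede ("where the real work lies"); as written the argument does not close.

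A secondary but real issue is the diagonal: your diagonal entries are \emph{not} products of genus-zero multinomials, since the vertices carrying the genus are positive-genus, and their top intersection numbers must be shown nonzero. The paper handles this cleanly by working in the basis $\psi(a_1,\dots,a_k)=(\pi^k_{g,n})_*\bigl(\prod_i\psi_{n+i}^{a_i+1}\bigr)$ rather than kappa monomials (Faber's formula makes the change of basis invertible and triangular in length), for which the diagonal is explicitly $\Lambda(\mathbf{p})=\prod_i \frac{1}{24^{g_i}\, g_i!}\neq 0$, coming from $\int_{\Modbar_{g,1}}\psi_1^{3g-2}=\frac{1}{24^g\, g!}$. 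Your kappa-monomial variant can be made to work — additivity of $\kappa$-restriction to boundary strata does give the same support-on-refinements triangularity — but you would still need the positive-genus nonvanishing, which your proposal never supplies. So the two missing ingredients are exactly the paper's Proposition 3.1 (realizing all of $\PP(d,3g-2+n-d)$, using graphs with $b_1>0$ where trees fail) and the evaluation of the diagonal entries at positive genus.
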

In codimension $1$, $\kappa^*_c(\Mgnbar)$ is relatively easy to describe,
as presented in the following theorem.

\begin{theorem} \label{thm:main-3}
For $d=3g-4+n$ and $g>1$
the rank of $\kappa^{d}_c(\Mgnbar)$ is equal to
\begin{displaymath}
\left\lceil \frac{(n+1)(g+1)}{2}\right\rceil -1,
\end{displaymath}
while for $g=1$ the rank is equal to $n-1$. 
\end{theorem}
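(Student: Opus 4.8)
The plan is to compute the rank of $\kappa^{d}_c(\Mgnbar)$ with $d=3g-4+n$ by realizing it as the rank of an explicit finite pairing matrix. Writing $N=3g-3+n=\dim\Mgnbar$, the degree-$d$ part of the kappa ring is spanned by the monomials $\kappa_\lambda=\kappa_{\lambda_1}\cdots\kappa_{\lambda_\ell}$ indexed by partitions $\lambda\vdash d$, and each such class lies in $A^{N-1}(\Mgnbar)=A_1(\Mgnbar)$. By definition of the combinatorial quotient, $\rank\,\kappa^{d}_c(\Mgnbar)$ equals the rank of the pairing of these monomials against the codimension-one combinatorial cycles, which (by a dimension count and the notion of stable weighted graph from Section 2) are the boundary divisors $\delta_{\mathrm{irr}}$ and $\delta_{h,S}$. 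Because every $\kappa_a$ is invariant under the $S_n$ permuting the marked points, the number $\int_{\Mgnbar}\kappa_\lambda\cdot\delta_{h,S}$ depends only on $h$ and $s=|S|$; hence the essentially distinct test cycles are the irreducible divisor together with the stable pairs $(h,s)$ taken modulo the involution $(h,s)\mapsto(g-h,n-s)$. So the first step is to set up this matrix.

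The second step is to evaluate the entries via the additivity of kappa classes under gluing, $\xi^*\kappa_a=\sum_v\kappa_a^{(v)}$, so that the pairing against a separating divisor decomposes as
$$\int_{\Modbar_{h,s+1}\times\Modbar_{g-h,n-s+1}}\prod_{i}\Big(\kappa_{\lambda_i}^{(1)}+\kappa_{\lambda_i}^{(2)}\Big),$$
which expands into a sum, over the ways of distributing the parts of $\lambda$ between the two factors, of products of top kappa-intersection numbers; the irreducible divisor contributes the single number $\int_{\Modbar_{g-1,n+2}}\kappa_\lambda$. Two features drive everything: a summand vanishes unless the parts assigned to a factor sum to its dimension, and every genuine top kappa number is strictly positive. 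Counting the stable pairs $(h,s)$ modulo the involution and adding the irreducible divisor gives exactly $\lceil (n+1)(g+1)/2\rceil-1$ distinct columns, which already yields the upper bound on the rank for $g>1$.

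For the lower bound I would prove these columns linearly independent by exhibiting an equal number of partitions making the square matrix nonsingular. The natural device is to order the divisor types by the maximum $D=\max(d_1,d_2)$ of the two factor dimensions and to test the type with maximal factor dimension $D$ against the partition $(D,1,\dots,1)$: the single large part $D$ cannot be placed on any factor of dimension smaller than $D$, so degree vanishing makes the matrix triangular with respect to this ordering, while positivity keeps the diagonal nonzero. The hard part will be separating the distinct divisor types that share the same dimension split $\{d_1,d_2\}$ but differ in how the genus is distributed (for instance $\delta_{\mathrm{irr}}$ and a separating divisor with a genus-zero factor can have the same $D$): here the entries are genuinely different top kappa numbers on moduli of different genera, and ruling out an accidental dependence requires either a nondegeneracy input on kappa-intersection numbers or an induction on $g$ and $n$ that peels off one factor at a time through the gluing formula. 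Establishing this nondegeneracy is the crux of the theorem.

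Finally, the genus-one case must be treated separately, since the stated answer $n-1$ is one less than the general count. Here $N=n$, the relevant factors are only $\Modbar_{0,\ast}$ and $\Modbar_{1,\ast}$, whose top kappa numbers are available in closed form, and the $n$ columns ($\delta_{\mathrm{irr}}$ together with the $n-1$ splittings carrying a genus-zero part) satisfy exactly one linear relation coming from the genus-one tautological relations on the elliptic factor (the dependence tying $\kappa_1$, $\lambda_1$ and the boundary). I would compute the genus-one matrix directly, exhibit this single relation, and check there are no others, giving rank $n-1$.
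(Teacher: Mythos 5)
Your outer skeleton does match the paper's strategy: in codimension one the combinatorial cycles are precisely the boundary divisor types (the irreducible one plus the splittings $(h,s)$ modulo the involution), counting these types gives $\lceil (n+1)(g+1)/2\rceil-1$, and the rank is computed from a block-triangular pairing matrix in which the outer triangularity comes from degree constraints and the diagonal blocks pair a fixed dimension split against a family of refinements. But the step you defer --- separating the divisor types that share a dimension split but differ in genus distribution --- is not an auxiliary lemma to be supplied later ``by a nondegeneracy input or an induction'': it is the entire content of the theorem, and your proposal contains no argument for it. In the paper this is Section 5.2: after the fine-assignment reduction, the diagonal blocks are (up to nonzero scalars) the $(g+1)\times(g+1)$ matrices $M(m;g)$ with entries $n_j(h,m)=24^h\,h!\int_{\Modbar_{h,m-3h+3}}\psi_1^{m-j}\psi_2^{j}$, where $h$ indexes the genus distributions and $j$ indexes the refinements $(a_1,a_2-j,j)$ of a length-two partition --- note that a single test partition per type, like your $(D,1,\dots,1)$, cannot suffice, since one needs $g+1$ independent functionals to separate $g+1$ genus splits. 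Proving $M(m;g)$ invertible requires showing $n_j(h,m)$ is a polynomial of degree $j$ in $(h,m)$ (String equation), the symmetry $n_j(h,m)=n_{m-j}(h,m)$ for $m\geq 2h$ (KdV equation), and finally the determinant identity that $d_g(m)=\mathrm{Det}\,M(m;g)$ is a nonzero constant times $(m-2)\prod_{i=g+3}^{2g+1}(m-i)$.

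That determinant formula also shows your worry about ``accidental dependence'' is not hypothetical: $d_g(m)$ genuinely vanishes at the integer values $m=2$ and $m=g+3,\dots,2g+1$, so positivity of top intersection numbers cannot rule out degeneration, and the paper must treat the small second parts $a_2\leq 4$ by a separate explicit $11\times 11$ determinant, whose value carries a factor $(g-1)^2$ --- vanishing exactly at $g=1$, consistent with the extra relation in genus one. On that genus-one case your proposal is likewise only a plan: the single relation $\frac{1}{12}\int_{[D_n]}\kappa=\sum_{i=1}^{n-1}\binom{n-2}{i-1}\int_{[D_i]}\kappa$ is not simply read off from $12\lambda_1=\delta_{\mathrm{irr}}$; the paper derives it from a closed-form evaluation of all top $\psi$-integrals on $\Modbar_{1,n}$ (proved by a String-equation induction), and then establishes the lower bound $n-1$ by exhibiting invertible $2\times 2$ minors computed from that same formula. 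So while your reduction to a nondegeneracy statement is the right picture, the proposal omits the KdV/String determinant computations and the explicit genus-one integral formula that constitute the actual proof.
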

 
The reason for the difference between genus $1$ and higher genus is that there 
is a relation between the combinatorial divisors of $\Modbar_{1,n}$ as far as 
the kappa classes are concerned. More precisely, let 
$D_i\simeq \Modbar_{1,n-i}\times \Modbar_{1,i+2},\ i=1,...,n-1$ 
denote the divisor in  $\Modbar_{1,n}$ which corresponds to a degeneration of an
$n$-pointed genus $1$ curve to an $(n-i)$-pointed genus $1$ curve and an 
$(i+2)$-pointed genus $0$ curve. Let $D_n\simeq \Modbar_{0,n+2}$ denote the
the divisor which corresponds to a degeneration of   an
$n$-pointed genus $1$ curve to  an $(n+2)$-pointed genus $0$ curve. 

\begin{theorem}\label{thm:main-4}
For every element $\kappa\in\kappa^{n-1}(\Modbar_{1,n})$
\begin{displaymath}
\frac{1}{12}\int_{[D_n]}\kappa=\sum_{i=1}^{n-1}{{n-2}\choose i-1}\int_{[D_i]}\kappa.
\end{displaymath}
\end{theorem}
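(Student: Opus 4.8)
The plan is to prove the stated identity as an equality of linear functionals on $\kappa^{n-1}(\Modbar_{1,n})$; by linearity it is enough to treat a single monomial $\kappa=\kappa_{b_1}\cdots\kappa_{b_k}$ with $b_1+\cdots+b_k=n-1$. The first step is to pull $\kappa$ back along the clutching maps that parametrize the boundary divisors and use the additivity of the kappa classes under gluing. For the separating clutching map $\xi_i\colon\Modbar_{1,n-i}\times\Modbar_{0,i+2}\to\Modbar_{1,n}$ one has $\xi_i^*\kappa_a=\kappa_a\otimes 1+1\otimes\kappa_a$, so expanding the monomial and using the dimension constraint splits $[D_i]$ into products of top intersections,
$$\int_{[D_i]}\kappa=\sum_{\substack{S\sqcup T=\{1,\dots,k\}\\ \sum_{j\in S}b_j=n-i}}\left(\int_{\Modbar_{1,n-i}}\prod_{j\in S}\kappa_{b_j}\right)\left(\int_{\Modbar_{0,i+2}}\prod_{j\in T}\kappa_{b_j}\right).$$
For the non-separating map $\xi_n\colon\Modbar_{0,n+2}\to\Modbar_{1,n}$ one has $\xi_n^*\kappa_a=\kappa_a$, and since $\xi_n$ is generically two-to-one onto $D_n$,
$$\int_{[D_n]}\kappa=\tfrac12\int_{\Modbar_{0,n+2}}\kappa_{b_1}\cdots\kappa_{b_k}.$$

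The second step is to reduce every genus $1$ factor to a genus $0$ computation. Each genus $1$ top intersection $\int_{\Modbar_{1,m}}\prod_{j\in S}\kappa_{b_j}$ is evaluated by pushing forward along the forgetful morphism $p\colon\Modbar_{1,m}\to\Modbar_{1,1}$, iterating the comparison formula $\pi^*\kappa_a=\kappa_a-\psi^a$ and the defining relation $\pi_*\psi^{a+1}=\kappa_a$, and finishing with the single number $\int_{\Modbar_{1,1}}\kappa_1=\tfrac1{24}$. In this way each such integral becomes $\tfrac1{24}$ times an explicit integer. The coefficient $\tfrac1{12}$ appearing in front of the $D_n$ term is exactly the shadow of this genus $1$ input: the $\tfrac12$ from the automorphism of the non-separating node combined with $\int_{\Modbar_{1,1}}\kappa_1=\tfrac1{24}$ (equivalently, the relation $12\lambda_1=\delta_{\mathrm{irr}}$ on $\Modbar_{1,1}$). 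Concretely, $\tfrac1{12}\int_{[D_n]}\kappa=\tfrac1{24}\int_{\Modbar_{0,n+2}}\kappa_{b_1}\cdots\kappa_{b_k}$, so after the reductions both sides are expressed entirely through genus $0$ kappa integrals $\int_{\Modbar_{0,m}}\prod\kappa_{b_j}$, which are governed by explicit multinomial formulas.

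At this stage the theorem has become a purely combinatorial identity among genus $0$ integrals, and the binomial coefficients $\binom{n-2}{i-1}$ must be produced intrinsically: they should arise as the number of ways of distributing the $n$ markings between the genus $0$ bubble of $D_i$, which carries $i+1$ of them, and the remaining component, once the two node branches are accounted for. I would organize the genus $0$ integrals into a generating function (an exponential in the variables dual to the kappa indices) and verify the identity at the level of generating functions, so that the weighted sum over the separating divisors collapses to the single integral over $\Modbar_{0,n+2}$ coming from $D_n$.

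The main obstacle is precisely this final genus $0$ identity: matching, uniformly for every partition $(b_1,\dots,b_k)$ of $n-1$, the one integral over $\Modbar_{0,n+2}$ against the binomially weighted sum of products of genus $0$ integrals attached to the $D_i$. Proving it for all monomials at once, rather than merely on a convenient spanning set, is the crux; the bookkeeping of which markings land on which component — and the emergence of exactly $\binom{n-2}{i-1}$ — is where the genus $1$ structure is genuinely used, since the class $12\lambda_1-\delta_{\mathrm{irr}}$ is nonzero on $\Modbar_{1,n}$ and only its pairing against kappa monomials vanishes. Once the combinatorial identity is established, summation over the splittings in the first displayed formula together with linearity in $\kappa$ yields the theorem.
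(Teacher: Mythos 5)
Your first two reduction steps are essentially correct and parallel what the paper does: restriction of kappa classes to a boundary divisor is additive, so $\int_{[D_i]}\kappa$ splits into sums of products of genus-$1$ and genus-$0$ top intersections; the non-separating gluing map is generically $2$-to-$1$ onto $D_n$; and every genus-$1$ top intersection equals $\frac{1}{24}$ times an explicit combinatorial number (the paper packages this as the closed formula of Theorem~\ref{thm:symmetry-2}, working with the spanning set of $\psi$-classes $\psi(\tilde\q)$ rather than kappa monomials, which is equivalent by Faber's formula). The genuine gap is that you stop exactly where the theorem lives. After your reductions, the statement \emph{is} the combinatorial identity you defer --- matching, for every partition, the single genus-$0$ integral coming from $D_n$ against the binomially weighted sum over $D_1,\dots,D_{n-1}$ --- and you offer no argument for it beyond the intention to ``organize the integrals into a generating function,'' while yourself labeling it the main obstacle and the crux. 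In the paper this step is the bulk of the proof: one defines the discrepancy $F(\q)=\int_{[G_n]}\psi(\tilde\q)-24\sum_{i=1}^{n-1}\binom{n-2}{i-1}\int_{[G_i]}\psi(\tilde\q)$, expands it via Theorem~\ref{thm:symmetry-2} into explicit binomial sums, and then proves by direct manipulation that $F$ satisfies a String-type recursion $\sum_{i=1}^{k}F(\q_i)=F(\q)$ and a Dilaton-type relation $F(\ov{\q})=(\sig(N)+1)F(\q)$ for $\ov{\q}=(a_1,\dots,a_k,1)$, which together reduce the vanishing of $F$ to the trivially checked case $k=1$, $a_1=1$. Nothing in your setup produces these recursions formally; they require the explicit genus-$1$ input and a substantial computation, neither of which appears in the proposal.

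A secondary problem is your proposed origin of the coefficients $\binom{n-2}{i-1}$: they cannot arise ``as the number of ways of distributing the $n$ markings,'' because each $D_i$ is a single fixed divisor and $\int_{[D_i]}\kappa$ is independent of which labels sit on which component (kappa monomials are $S_n$-invariant, and relabelings merely permute the divisors of a given topological type); no sum over label distributions occurs in the statement. What does get distributed are the $k$ auxiliary marked points used to write $\psi(\tilde\q)$ as a pushforward, and in the paper's computation the weights $\binom{n-2}{i-1}$ recombine with those distributions into the factors $\binom{\lambda(N)}{\lambda(I)}$ with $\lambda(I)=\sig(I)-|I|-1$ appearing inside the identity itself. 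So the binomial weights are an output of the combinatorial identity, not an input one can count one's way to. As written, your proposal is a correct reduction framework --- the same one the paper uses implicitly --- but it is not a proof, because its central claim is acknowledged rather than established.
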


The main tool for proving the above relation is an explicit formula for 
the integration of $\psi$ classes over $\Modbar_{1,n}$. Let $\sig_i(a_1,...,a_n)$ 
denote the $i$-th symmetric function in variables $a_1,...,a_n$. The formula is 
given by the following theorem.

\begin{theorem}\label{thm:main-5}
Suppose that the non-negative integers $a_1,...,a_n$ are given so that 
$a_1+...+a_n=n$. Then
\begin{displaymath}
\int_{\Modbar_{1,n}}\psi_1^{a_1}\hdots\psi_n^{a_n}=\frac{1}{24}{n\choose {a_1,\hdots,a_n}}
\left(1-\sum_{i=2}^n\frac{\sig_i(a_1,\hdots,a_n)}{i(i-1){n\choose i}}\right).
\end{displaymath}
\end{theorem}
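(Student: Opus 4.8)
The plan is to reduce the statement to the string and dilaton equations together with the single base value $\int_{\Modbar_{1,1}}\psi_1=\tfrac1{24}$. The key elementary observation is that in genus $1$ these already determine every top intersection: if $a_1+\dots+a_n=n$ and some $a_i=0$, one strips that marked point with the string equation, while if every $a_i\ge 1$ then necessarily $a_1=\dots=a_n=1$ and one strips a point with the dilaton equation; in either case the number of marked points drops by one, so an induction on $n$ (with base $n=1$) pins down all the integrals. Thus it suffices to check that the closed formula on the right-hand side satisfies these two recursions and the base case.

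Writing $F(a_1,\dots,a_n)$ for the right-hand side and $S=\sum_{i=2}^n \sig_i(a_1,\dots,a_n)/\big(i(i-1)\binom{n}{i}\big)$, I would first dispose of the dilaton equation $F(a_1,\dots,a_n,1)=n\,F(a_1,\dots,a_n)$. Here $\binom{n+1}{a_1,\dots,a_n,1}=(n+1)\binom{n}{a_1,\dots,a_n}$ and $\sig_i(a_1,\dots,a_n,1)=\sig_i+\sig_{i-1}$, so after cancelling the common multinomial the claim becomes the scalar identity $(n+1)(1-S')=n(1-S)$, where $S'$ denotes the corrected sum attached to the $(n+1)$-tuple $(a_1,\dots,a_n,1)$. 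Using $\binom{n+1}{i}=\tfrac{n+1}{n+1-i}\binom{n}{i}$ to re-expand $S'$ in terms of the $\binom{n}{i}$, and the normalization $\sig_1=n$ to evaluate the $j=1$ term, this collapses to $(n+1)S'=nS+1$, which is exactly what is needed. This part is routine bookkeeping with binomial coefficients.

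The main obstacle is the string equation $F(a_1,\dots,a_n,0)=\sum_{j:\,a_j>0}F(a_1,\dots,a_j-1,\dots,a_n)$, now under the constraint $\sig_1=\sum a_i=n+1$, because the right-hand side is a sum of shifted symmetric functions. The constant ($\sig$-free) parts match immediately since $\sum_j a_j\binom{n}{a_1,\dots,a_j-1,\dots,a_n}=(n+1)\binom{n}{a_1,\dots,a_n}$. For the correction terms, the crucial step is to control $\sum_{j}a_j\,\sig_i(a_1,\dots,a_j-1,\dots,a_n)$: writing $\sig_i(\dots,a_j-1,\dots)=\sig_i-\sig_{i-1}(a_1,\dots,\widehat{a_j},\dots,a_n)$ (omission of the $j$-th entry) and invoking the symmetric-function identity $\sum_j a_j\,\sig_{i-1}(a_1,\dots,\widehat{a_j},\dots,a_n)=i\,\sig_i$ (each degree-$i$ monomial is counted once per factor), one obtains $\sum_j a_j\,\sig_i(\dots,a_j-1,\dots)=(n+1-i)\,\sig_i$. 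Feeding this back and again using $\binom{n+1}{i}=\tfrac{n+1}{n+1-i}\binom{n}{i}$ makes the two sides of the string equation agree coefficient-by-coefficient in $\sig_i$, which completes the induction. I expect the extraction of the identity $\sum_j a_j\,\sig_{i-1}(\dots,\widehat{a_j},\dots)=i\,\sig_i$ and the careful handling of the $\sig_1=n+1$ normalization to be the only genuinely delicate points.
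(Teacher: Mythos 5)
Your proposal is correct and takes essentially the same route as the paper: the paper also proceeds by induction on $n$, checking that the right-hand side satisfies the String equation (via multinomial identities) and anchoring at the all-ones tuple, which it evaluates directly as $\frac{(n-1)!}{24}$ where you instead run a dilaton check down to $\int_{\Modbar_{1,1}}\psi_1=\frac{1}{24}$; your coefficient-by-coefficient verification in the $\sigma_i$, using $\sum_j a_j\,\sigma_{i-1}(a_1,\dots,\widehat{a_j},\dots,a_n)=i\,\sigma_i$ and hence $\sum_j a_j\,\sigma_i(a_1,\dots,a_j-1,\dots,a_n)=(n+1-i)\,\sigma_i$ together with $\binom{n+1}{i}=\frac{n+1}{n+1-i}\binom{n}{i}$, checks out. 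One typo to fix: the constant-term identity should read $\sum_j \binom{n}{a_1,\dots,a_j-1,\dots,a_n}=\binom{n+1}{a_1,\dots,a_n,0}$ (the spurious factor $a_j$ in your display double-counts, since the weight $\frac{a_j}{n+1}$ already emerges from the ratio of multinomials), after which the argument is complete.
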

 
When the codimension $e=3g-3+n-d$ is arbitrary, a similar theorem may be 
proved for the asymptotic behaviour of the rank of the combinatorial kappa ring 
$\kappa_c^d(\Mgnbar)$, as the number $n$ of the marked points grows large: 
\begin{theorem}\label{thm:main-6}
The  rank of 
the combinatorial kappa ring $\kappa_c(\Mgnbar)$ in codimension $e$, as the number 
$n$ of the marked points becomes large,
is asymptotic to 
$$\frac{{n+e \choose e}.{g+e\choose e}}{(e+1)!}.$$
\end{theorem}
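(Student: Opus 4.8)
The plan is to reinterpret the rank of $\kappa^d_c(\Mgnbar)$, with $d=3g-3+n-e$, as the rank of the pairing matrix whose rows are indexed by the codimension-$e$ combinatorial cycles $[\Gamma]$ and whose columns are indexed by the degree-$d$ kappa monomials $\kappa_{b_1}\cdots\kappa_{b_m}$, the entries being the integrals $\int_{[\Gamma]}\kappa_{b_1}\cdots\kappa_{b_m}$. The decisive structural input is the additivity of the kappa classes under the gluing map $\xi_\Gamma:\prod_v\Modbar_{g_v,n_v}\to\Mgnbar$ associated with a stable weighted graph $\Gamma$, namely $\xi_\Gamma^*\kappa_j=\sum_v\kappa_j^{(v)}$, where $\kappa_j^{(v)}$ is the $j$-th kappa class of the $v$-th factor. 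Combined with the product structure of the stratum, this shows that $\int_{[\Gamma]}\kappa_{b_1}\cdots\kappa_{b_m}$ is obtained by summing, over all ways of distributing the factors $\kappa_{b_j}$ among the vertices so that the total degree on each vertex $v$ equals $\dim\Modbar_{g_v,n_v}=3g_v-3+n_v$, the products $\prod_v\int_{\Modbar_{g_v,n_v}}(\cdots)$. In particular this number depends on $\Gamma$ only through the multiset $\{(g_v,n_v)\}_v$ of its vertices (and, up to a harmless scalar $1/|\Aut\,\Gamma|$). I will call this multiset the \emph{type} of $\Gamma$, so that the whole problem reduces to counting types and establishing independence of the associated functionals.

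For the upper bound I would note that the rank is at most the number of distinct types, since cycles of the same type give proportional rows and hence span a single line. A connected codimension-$e$ graph has $e$ edges and therefore at most $e+1$ vertices, with equality precisely for trees; as $n\to\infty$ the number of ways to distribute the $n$ legs among $V$ vertices grows like $n^{V-1}$, so the trees with $e+1$ vertices dominate while every graph with $h^1\geq 1$ contributes only $O(n^{e-1})$ types. For a tree a type is a multiset of pairs $(g_v,n_v)$, $v=1,\dots,e+1$, with $\sum_v g_v=g$ and $\sum_v n_v=n+2e$; equivalently a multiset of pairs $(g_v,D_v)$ with $D_v=3g_v-3+n_v$, $\sum_v g_v=g$ and $\sum_v D_v=d$. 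Counting \emph{ordered} tuples gives $\binom{g+e}{e}$ genus distributions and $\binom{d+e}{e}\sim n^e/e!$ dimension distributions, and these distribute independently; dividing by the generic symmetry factor $(e+1)!$ yields $\tfrac{1}{(e+1)!}\binom{g+e}{e}\tfrac{n^e}{e!}$, which is exactly the asymptotics of $\binom{n+e}{e}\binom{g+e}{e}/(e+1)!$. Hence the rank is bounded above by this quantity up to a lower-order error.

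For the matching lower bound I would exhibit a triangular block of the pairing matrix. Order the tree-types first by the dimension multiset $\mathbf D=\{D_v\}$ under the refinement (dominance) order, and, within a fixed $\mathbf D$, by the genus distribution. To separate distinct $\mathbf D$'s, pair a type with the monomial $\prod_v\kappa_{D_v}$ carrying one factor per vertex: since every type here has exactly $e+1$ parts, the only distributions of these $e+1$ factors respecting the vertex dimensions are bijective, so this monomial pairs nontrivially only with types having the same $\mathbf D$, which makes the matrix block-triangular. This step needs only that on each vertex one can choose a kappa monomial of the prescribed total degree with nonzero integral over $\Modbar_{g_v,n_v}$. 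The hard part will be separating, \emph{within a single block} $\mathbf D$, the $\binom{g+e}{e}$ genus distributions compatible with $\mathbf D$: here the dimensions are frozen and the genera must be detected from the numbers $\int_{\Modbar_{g_v,n_v}}(\cdots)$ alone. I would attack this by refining the probe monomials, splitting some $\kappa_{D_v}$ into $\kappa_a\kappa_{D_v-a}$, and exploiting the explicit genus dependence of such vertex integrals (of the kind recorded in Theorem~\ref{thm:main-5} for $g_v=1$, together with the classical genus-$0$ evaluations and positivity of top kappa numbers) to build on each vertex a family of functionals that is triangular in $g_v$; taking products over the $e+1$ vertices then resolves the full genus distribution and gives the desired full-rank block.

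Finally I would combine the two estimates. The relations among boundary strata that genuinely lower the rank---such as the genus-$1$ relation of Theorem~\ref{thm:main-4}, and the stability constraints $2g_v-2+n_v>0$ which fail only when some vertex is starved of legs---involve only $O(n^{e-1})$ of the types and so do not affect the leading term. Together with the upper bound, this shows that the rank of $\kappa^d_c(\Mgnbar)$ in codimension $e$ is asymptotic to $\binom{n+e}{e}\binom{g+e}{e}/(e+1)!$ as $n\to\infty$, as asserted.
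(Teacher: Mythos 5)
Your overall architecture coincides with the paper's: reduce everything to the pairing matrix between ``types'' (the paper's modified weight multi-sets $\QQ(d;g,n)$) and degree-$d$ monomials, observe the pairing depends on a stable weighted graph only through its type, get the upper bound from the asymptotic count of types, and get the lower bound from a block-triangular sub-matrix whose diagonal blocks must separate the ${g+e\choose e}$ genus distributions within a fixed dimension multiset. But the two steps you explicitly defer are exactly where the paper's work lies, and as stated both contain genuine gaps. The more serious one is the within-block step: you assert that by splitting $\kappa_{D_v}$ into $\kappa_a\kappa_{D_v-a}$ one can build per-vertex functionals ``triangular in $g_v$.'' There is no such triangularity available: the relevant vertex integrals are (up to normalization) the two-point numbers $n_j(h,m)=24^h\,h!\int_{\Modbar_{h,m-3h+3}}\psi_1^{m-j}\psi_2^{j}$, which by Lemma~\ref{lem:String} are polynomials of degree $j$ in the genus $h$ with generically \emph{all} entries nonzero, so no choice of splittings produces a triangular genus dependence. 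What your sketch needs, and what the paper actually proves, is that the $(g+1)\times(g+1)$ matrices of such integrals have nonvanishing determinant; this is the hard content of Section 5 (Lemma~\ref{lem:String}, the symmetry of Proposition~\ref{prop:symmetry-1}, and Theorem~\ref{thm:determinant}, proved via the KdV, String and Dilaton equations). Even then the determinant $d^k_g(m)$ is only a nonzero polynomial in $m$ with finitely many integer roots, so full rank fails for a sporadic set of types: the paper must choose the splitting sizes $b_i$ in disjoint windows $((i-1)h,ih]$ with $h=g+2$ and excise a bad set of lower-order size (Proposition~\ref{prop:asymptotic-lemma}). Genus-$0$ evaluations, positivity of top kappa numbers, and Theorem~\ref{thm:main-5} do not substitute for this nondegeneracy theorem.

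The second gap is that the block-triangularity you claim is false as stated. A refined probe with $e+2$ parts, say $(a_0,a_1-b_1,\dots,a_e-b_e,b_e,\dots,b_1)$, pairs nontrivially with \emph{every} type whose dimension multiset coarsens it --- for instance $\{a_1-b_1,\,b_1+D_2,\dots\}$ --- not only with the block $\mathbf{D}$ to which you assigned it, and these spurious coarsenings are not consistently on one side of the dominance or lexicographic order. This is precisely the failure of condition (2) of Definition~\ref{def:fine-assignment} that the paper confronts: it writes the relevant sub-matrix as $T(d;g,n)+\sum_{i}E_i(d;g,n)$ with $T$ triangular, and proves $\rank\,E_i=o\bigl({n+e\choose e}\bigr)$ by exhibiting massive row repetition in $E_i$ under the merge maps $f_i:\QQ(d;g,n)\to\QQ(d;g,n-1)$ (Proposition~\ref{prop:asymptotic-lemma-2}). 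Your proposal contains no mechanism for controlling these off-triangular entries. The upper bound (the type count) and the general reduction to the pairing matrix are correct and agree with the paper, but without the determinant nondegeneracy and the error-matrix estimate the lower bound, and hence the asymptotic, does not go through.
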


The paper is organized as follows. In Section 2 we introduce the combinatorial 
cycles and discuss the integration of the $\kappa$ and $\psi$ classes over them. 
Each $d$-dimensional combinatorial cycle $\Ccal\subset \Mgnbar$ gives 
a linear map $\int_\Ccal:\kappa^d(\Mgnbar)\ra \Q$. 
Section 3 is (naively speaking) devoted to finding a list of combinatorial cycles
$\Ccal_1,...,\Ccal_N$ such that there are no {\emph{trivial}} relations among $\int_{\Ccal_i}$.
This gives a $N\times |\PP(d)|$ matrix $R(d;g,n)$, and the rank $r(d;g,n)$ of 
this matrix gives 
the rank of the combinatorial kappa ring in degree $d$. In Section 4 a strategy 
for estimating $r(d;g,n)$ from below is described. A quick corollary of the aforementioned 
strategy is Theorem~\ref{thm:main-2}. \\

In Section 5 we study the kappa ring in codimension $1$, which corresponds to 
$d=3g-4+n$. Explicit combinatorial formulas for the 
 integration of $\kappa$ and $\psi$ classes 
over $\Modbar_{1,n}$ (Theorem~\ref{thm:main-5}) are used to prove that 
there is a non-trivial relation among $\int_{\Ccal_i}$ when $g=1$ and $d=n-1$,
which is given by Theorem~\ref{thm:main-4}. Furthermore, using Theorem~\ref{thm:main-5} 
we show that the rank of $\kappa_c^{n-1}(\Modbar_{1,n})$ is $n-1$.
For $g>1$ the computation of the rank of $\kappa_c^{3g-4+n}(\Mgnbar)$ is reduced,
by the results of Section 4, to the computation of the rank of certain $(g+1)\times (g+1)$
matrices with entries consisting of the integrals of $\psi$ classes. We study
the aforementioned matrices, compute their determinant using the KdV, String and 
Dilation  equations, and conclude that all of them are full rank. The results are used 
 to prove Theorem~\ref{thm:main-3}. The same matrices re-appear as we 
study the asymptotic behaviour of the rank of $\kappa_c^{3g-3+n-e}(\Mgnbar)$ as
$n$ goes to infinity in Section 6. Once again, the results of Section 5 are used to 
prove Theorem~\ref{thm:main-6}.

{\bf{Acknowledgement.}} We would like to thank Rahul Pandharipande for 
several helpful discussions. We are also grateful to Carl Faber for pointing a 
mistake in the original version of this paper, and for suggesting the study 
of the kappa ring in codimension $1$. Many of the computations 
of this paper are performed on IPM's Computer Cluster, with the kind assistance 
of M. Ashtiani and E. Pourjani, and we would like to present our gratitude for their help.\\

\section{Combinatorial cycles and the $\psi$ classes}
Let us first consider an alternative basis for the kappa ring of
$\Mgnbar$, instead of the kappa classes. Let
$$\pi_{g,n}^{k}:\Modbar_{g,n+k}\ra \Mgnbar$$
denote the forgetful map which forgets the last
$k$ marked points.
\begin{defn}\label{defn:psi-classes}
For every multi-set $\q=(a_1\geq a_2\geq ...\geq a_k)$ of positive integers
define
\begin{itemize}
\item $|\q|:=k$ and $d(\q):=\sum_{i=1}^k a_i$
\item $\psi(\q):=\psi(a_1,...,a_{k}):=\left(\pi_{g,n}^{k}\right)_*
\left(\prod_{i=1}^k\psi_{n+i}^{a_i+1}\right)\in\kring$
\item $\kappa(\q)=\kappa(a_1,...,a_k):=\prod_{i=1}^k\kappa_{a_i}\in\kring$.
\end{itemize}
\end{defn}
The classes $\psi(\q)$, for $\q$ a multi-set  corresponding to a
partition of $d=d(\q)$ in $\PP(d)$, generate the
kappa ring $\kring$ in degree $d$ as a module over $\Q$. In particular,
for every positive integer $d$, $\kappa_d=\psi(d)$ is the $\psi$
class corresponding to the multi-set consisting of the single element
$d$. In order to describe the full correspondence between $\kappa$ 
and $\psi$ classes,
let $\q=(a_1,a_2,...,a_k)$ be a multi-set.
 For $\sigma\in S_k$ a permutation over $k$ elements let
 $$\sigma=\tau_1\tau_2...\tau_r,$$
be the canonical cycle decomposition for $\sigma$, including the $1$-cycles.
Let $\sigma(\q)_i$, for $i=1,2,...,r$, denote the sum of the elements of
$\q$ whose indices  correspond to the $i$-th cycle $\tau_i$. Define
$$\kappa_{\sigma(\q)}:=\prod_{i=1}^r\kappa_{\sigma(\q)_i}.$$
The following general formula is due to Faber and is discussed in \cite{AC}.
\begin{lem}\label{lem:Faber-formula}
For every partition $\q\in \PP(d)$ as above we have
$$\psi(\q)=\sum_{\sigma\in S_k}\kappa_{\sigma(\q)}.$$
\end{lem}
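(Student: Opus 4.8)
The plan is to prove the identity by induction on $k=|\q|$, peeling off one marked point at a time along the forgetful maps $\pi_{g,m}^{1}:\Modbar_{g,m+1}\to\Modbar_{g,m}$. The engine of the argument will be the three standard comparison identities over the moduli space: for $i\le m$ one has $\psi_i=(\pi_{g,m}^{1})^*\psi_i+D_{i,m+1}$ and $\kappa_a=(\pi_{g,m}^{1})^*\kappa_a+\psi_{m+1}^{a}$ on $\Modbar_{g,m+1}$, together with the vanishing $\psi_{m+1}\cdot D_{i,m+1}=0$ (the cotangent line at a point of a three-pointed rational bubble is trivial) and the projection-formula consequence $(\pi_{g,m}^{1})_*\left(\psi_{m+1}^{a+1}\cdot(\pi_{g,m}^{1})^*\alpha\right)=\kappa_a\cdot\alpha$, which also recovers $\kappa(a)=\psi(a)$ and hence the base case $k=1$.

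The decisive first step is to forget the last point $n+k$. Writing $\pi_{g,n}^{k}=\pi_{g,n}^{k-1}\circ\pi_{g,n+k-1}^{1}$, each factor $\psi_{n+i}^{a_i+1}$ with $i<k$ splits as a pullback plus terms divisible by $D_{n+i,n+k}$; but the whole product carries $\psi_{n+k}^{a_k+1}$ with $a_k+1\ge 1$, and since $\psi_{n+k}\cdot D_{n+i,n+k}=0$ every correction term dies. This yields
$$\left(\pi_{g,n+k-1}^{1}\right)_*\Big(\prod_{i=1}^{k}\psi_{n+i}^{a_i+1}\Big)=\kappa_{a_k}\cdot\prod_{i=1}^{k-1}\psi_{n+i}^{a_i+1},$$
a class on $\Modbar_{g,n+k-1}$ in which one new kappa factor has appeared. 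This forces a statement slightly more general than the lemma, carrying a finite multiset of \emph{pending} kappa factors. Accordingly I would set $F^{\mathbf b}_n(\q):=\left(\pi_{g,n}^{k}\right)_*\Big(\prod_{s=1}^{\ell}\kappa_{b_s}\cdot\prod_{i=1}^{k}\psi_{n+i}^{a_i+1}\Big)$ for a multiset $\mathbf b=(b_1,\dots,b_\ell)$ and, applying the same one-point computation to the top point (now also expanding each pending factor via $\kappa_{b_s}=(\pi^{1})^*\kappa_{b_s}+\psi_{n+k}^{b_s}$), establish the recursion
$$F^{\mathbf b}_n(a_1,\dots,a_k)=\sum_{T\subseteq\{1,\dots,\ell\}}F^{\mathbf b_T}_n(a_1,\dots,a_{k-1}),$$
where $\mathbf b_T$ replaces the entries $\{b_s:s\in T\}$ by the single entry $a_k+\sum_{s\in T}b_s$ and keeps the rest. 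Choosing the $\psi_{n+k}^{b_s}$ summand for $s\in T$ merges $b_s$ into the part produced by $\psi_{n+k}^{a_k+1}$, while the $(\pi^{1})^*\kappa_{b_s}$ summands for $s\notin T$ survive as pending factors at the next level; again the $D$-vanishing removes all correction terms. The base case is $F^{\mathbf b}_n(\varnothing)=\prod_s\kappa_{b_s}$.

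It then remains to solve this purely combinatorial recursion and specialize to $\mathbf b=\varnothing$, where $F^{\varnothing}_n(\q)=\psi(\q)$ by definition. I would show that $W(\q):=\sum_{\sigma\in S_k}\kappa_{\sigma(\q)}$ obeys exactly the $\mathbf b=\varnothing$ instance of the recursion, namely the classical rule for building a permutation of $\{1,\dots,k\}$ from one of $\{1,\dots,k-1\}$: either declare $k$ a new fixed point, or splice $k$ into a cycle immediately before one of the $k-1$ existing elements. The first option contributes the fresh part $\kappa_{a_k}$, and the second, as the insertion point ranges, contributes the terms in which $a_k$ is absorbed into an existing cycle-sum; matching these against the $T$-sum, while tracking the pending factors the geometry produces, gives $\psi(\q)=W(\q)$.

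The main obstacle I anticipate is not the geometry—the comparison identities and the decisive $\psi_{m+1}\cdot D_{i,m+1}=0$ make each pushforward step mechanical—but the combinatorial bookkeeping in the last step: one must frame the generalized recursion with pending kappa factors precisely enough that its $\mathbf b=\varnothing$ specialization reproduces the cycle-insertion recursion for $S_k$ with the correct multiplicities, each block of size $m$ being realized by exactly $(m-1)!$ distinct cycles. Verifying this multiplicity matching, rather than any individual algebro-geometric computation, is where the care will be needed.
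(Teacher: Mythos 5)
Your proposal cannot be matched against an argument in the paper, because the paper gives none: Lemma~\ref{lem:Faber-formula} is quoted as a formula of Faber, with the reference [AC] cited for its discussion. What you have written is, in outline, a correct reconstruction of the standard proof (essentially the argument of Arbarello--Cornalba). All four geometric inputs you invoke are true and sufficient: $\kappa_a=\pi^*\kappa_a+\psi_{m+1}^a$, $\psi_i=\pi^*\psi_i+D_{i,m+1}$, $\psi_{m+1}\cdot D_{i,m+1}=0$, and $\pi_*\left(\psi_{m+1}^{a+1}\cdot\pi^*\alpha\right)=\kappa_a\cdot\alpha$; and since every part of $\q$ is positive, the factor $\psi_{n+k}^{a_k+1}$ is always present to annihilate the $D$-correction terms, so both your one-point pushforward and the recursion $F^{\mathbf{b}}_n(a_1,\dots,a_k)=\sum_{T}F^{\mathbf{b}_T}_n(a_1,\dots,a_{k-1})$ check out. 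One phrasing in your last step needs repair: $W(\q)$ cannot literally ``obey the $\mathbf{b}=\varnothing$ instance of the recursion,'' since the recursion does not preserve $\mathbf{b}=\varnothing$ (a single application already creates the pending factor $(a_k)$). The clean way to close the induction is to prove, for arbitrary $\mathbf{b}$, the closed form
\begin{displaymath}
F^{\mathbf{b}}_n(\q)=\sum_{\sigma\in S_k}\ \sum_{\phi}\ \prod_{\tau\ \mathrm{cycle\ of}\ \sigma}\kappa_{\sigma(\q)_\tau+\sum_{\phi(s)=\tau}b_s}\ \prod_{\phi(s)=\ast}\kappa_{b_s},
\end{displaymath}
where $\phi$ assigns each pending index $s$ either to a cycle of $\sigma$ or to the symbol $\ast$ (left free); this expression visibly satisfies your recursion via the classical cycle-insertion bijection for $S_k$, the $(m-1)!$ multiplicity per block of size $m$ emerges automatically (e.g.\ for $k=3$, $\mathbf{b}=\varnothing$, the coefficient of $\kappa_{a_1+a_2+a_3}$ comes out as $2$, matching the two $3$-cycles), and specializing to $\mathbf{b}=\varnothing$ yields the lemma. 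Compared with the paper's citation, your route buys a self-contained proof and in fact a stronger statement (the full comparison formula for pushforwards of mixed $\kappa$--$\psi$ monomials under forgetful maps), at the cost of the combinatorial bookkeeping you correctly identified as the delicate point.
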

An immediate consequence of the above formula is the following lemma, 
c.f. lemma 1 in \cite{Rahul-k2}.
\begin{lem}in $\Acal^d(\Mgnbar)$ defined by
$$\Big\{ \psi(\q)\ \big|\ \q\in\PP(d)\Big\},\ \ \ \
\Big\{ \kappa(\q)\ \big|\ \q\in\PP(d)\Big\}$$
are related by an invertible linear transformation independent of $g$ and $n$.
\end{lem}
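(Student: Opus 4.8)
The plan is to show that the two sets of classes $\{\psi(\q)\}_{\q\in\PP(d)}$ and $\{\kappa(\q)\}_{\q\in\PP(d)}$ are related by an invertible change of basis over $\Q$, where the transition matrix depends only on $d$ (and not on $g$ or $n$). The starting point is Faber's formula from Lemma~\ref{lem:Faber-formula}, which expresses each $\psi(\q)$ as a sum over permutations $\sigma\in S_k$ of products $\kappa_{\sigma(\q)}$. The first thing I would observe is that this formula is purely combinatorial: for a fixed partition $\q=(a_1,\dots,a_k)$, the coefficient of a given $\kappa(\p)$ (for $\p\in\PP(d)$) in $\psi(\q)$ is simply the number of permutations $\sigma\in S_k$ whose cycle type, after summing the parts of $\q$ within each cycle, yields the partition $\p$. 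Crucially, this count involves only the numbers $a_1,\dots,a_k$ and the symmetric-group combinatorics — it makes no reference to $g$ or $n$.

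Assembling these coefficients over all $\q\in\PP(d)$ produces a square matrix $M=M(d)$ of size $|\PP(d)|\times|\PP(d)|$, independent of $g$ and $n$, with $\psi(\q)=\sum_{\p\in\PP(d)}M_{\q,\p}\,\kappa(\p)$. The second step is to argue that $M$ is invertible. The key structural point is that the finest contribution comes from the identity permutation $\sigma=\mathrm{id}$, whose cycle decomposition consists of $k$ one-cycles, so that $\sigma(\q)_i=a_i$ and $\kappa_{\sigma(\q)}=\prod_i\kappa_{a_i}=\kappa(\q)$. Every other permutation merges at least two indices, producing a $\kappa(\p)$ with strictly fewer parts than $\q$. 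Thus, with respect to any linear ordering of partitions that refines the partial order by number of parts (more parts being ``larger''), the matrix $M$ is triangular: the diagonal entry $M_{\q,\q}$ equals $1$ (the unique contribution from the identity), while $M_{\q,\p}=0$ whenever $\p$ has strictly more parts than $\q$.

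A triangular matrix with all diagonal entries equal to $1$ is invertible over $\Q$, and its inverse is again a matrix with entries in $\Q$ determined entirely by $d$. This establishes both invertibility and the independence of the transformation from $g$ and $n$, completing the argument. The main point requiring care — though it is more bookkeeping than obstacle — is verifying the triangularity claim precisely: namely, that merging indices under a nontrivial permutation can only decrease the number of parts and never increase it, so that no off-diagonal entry violates the ordering. Once the partial order on $\PP(d)$ by number of parts is fixed and the identity permutation is identified as the sole source of the diagonal term, the invertibility is immediate and the fact that $M(d)$ depends only on $d$ is manifest from Faber's formula itself.
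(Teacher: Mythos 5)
Your proof is correct and takes essentially the same approach as the paper's: both derive the transition matrix from Faber's formula (Lemma~\ref{lem:Faber-formula}) and observe that, in the partial ordering of $\PP(d)$ by length, it is triangular with $1$'s on the diagonal (the identity permutation being the unique contribution preserving the number of parts), hence invertible, with coefficients manifestly independent of $g$ and $n$. Your write-up simply makes explicit the bookkeeping that the paper's two-line proof leaves implicit.
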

\begin{proof}
The transformation is given by Faber's formula from 
lemma~\ref{lem:Faber-formula} above.
In the partial ordering of $\PP(d)$ by length the transformation is 
triangular, with $1$'s on the diagonal, hence invertible.
\end{proof}

Our main tool for proving the independence of the generators in the kappa ring
is the integration against  the combinatorial cycles.

\begin{defn}
A {\emph{weighted graph}} $G$ is a finite connected 
graph with a set $V(G)$ of vertices and
a set $E(G)$ of edges, and a weight function
$$\epsilon=\epsilon_G:V(G)\ra \Z^{\geq 0}\times \Z^{\geq 0}.$$
For $i\in V(G)$ denote the degree of $i$by $d_i=d(i)$ and let $\epsilon(i)=(g_i,n_i)$.
A {\emph{\SCC}} is a weighted graph $G$
with the property that  for every vertex $i\in V(G)$, $2g_i+n_i+d_i>2$.
If $G$ is a \SCC define
\begin{itemize}
\item $g(G):=\left(\sum_{i\in V(G)}g_i\right)+|E(G)|-|V(G)|+1$
\item $n(G):=\sum_{i\in V(G)}n_i$.
\end{itemize}
\end{defn}
Suppose that $G$ is a \SCC as above. An automorphism $\phi$ 
of $G$ consists of a pair of bijective 
maps  $\phi_V:V(G)\ra V(G)$ and $\phi_E:E(G)\ra E(G)$ and a subset 
$E'\subset E(G)$ such that the following are satisfied. 
\begin{itemize}
\item For every edge $e\in E(G)$ connecting the vertices $i,j\in V(G)$, $\phi_E(e)$ is an edge 
connecting $\phi_V(i)$ and $\phi_V(j)$. 
\item For every $i\in V(G)$,  $\epsilon(i)=\epsilon(\phi(i))$. 
\item Every $e\in E'$ connects a vertex to itself.
\end{itemize}
The automorphisms of $G$ form a group which  is denoted by $\Aut(G)$. \\

Associated with a \SCC $G$
we may construct a cycle in the Chow ring of $\Modbar_{g(G),n(G)}$
as follows. Associated with a vertex $i\in V(G)$, let
$\Modbar(i)$ denote the moduli space $\Modbar_{g_i,n_i+d_i}$ where the labels of the last
$d_i$ marked points correspond to those edges in $E(G)$ which are adjacent to
$i$. Let $\Ccal(G)$ denote the product
$$\Ccal(G)=\prod_{i\in V(G)}\Modbar(i)=\prod_{i\in V(G)}\Modbar_{g_i,n_i+d_i}.$$
Any automorphism $\phi$ of of the \SCC $G$ gives an automorphism of 
the space $\Ccal(G)$. 
The group $\Aut(G)$ of automorphisms of $G$ thus acts on $\Ccal(G)$. 
There is a map $\imath_G$ from the product $\Ccal(G)$ to the 
moduli space  $\Modbar_{g(G),n(G)}$, which is defined as follows.
Choose a point $(\Sig_i,\z_i\cup \w_i)_{i\in V(G)}$  of $\Ccal(G)$, where
$\z_i$ and $\w_i$ are sets of $n_i$ and $d_i$ marked points on the curve 
$\Sig_i$ of genus $g_i$ respectively.
For an edge $e\in E(G)$ connecting $i,j\in V(G)$ glue the marked points in $\w_i$
and $\w_j$ corresponding to $e$ to each other. The result is a stable curve
$$\imath_G\left(\left(\Sig_i, \z_i\cup \w_i\right)_{i\in V(G)}\right)
\in \Modbar_{g(G),n(G)}.$$
The map $\imath_G$ respects the action of $\Aut(G)$, and gives an 
embedding of $\Ccal(G)/\Aut(G)$ in $\Modbar_{g(G),n(G)}$.
Thus, a \SCC $G$ determines a cycle
$$\left(\imath_G\right)_* \left[{\Ccal(G)}\right]=
{|\Aut(G)|}.{\left(\imath_G\right)_* \left[\frac{\Ccal(G)}{\Aut(G)}\right]}
\in \Acal_{d}\left(\Modbar_{g(G),n(G)}\right),$$
where $d=3g(G)-3+n(G)-|E(G)|$. This cycle 
is denoted by $[G]$,  by slight abuse of notation.\\

 For a \SCC $G$ let us assume that $n=n(G)$ and $g=g(G)$. Let 
 $\psi(\q)=\psi(a_1,...,a_k)$ be a $\psi$ class. If
$$d(\q)+|E(G)|=3g-3+n$$
we may integrate $\psi(\q)$ over
$[G]\in \Acal_{3g-3+n-|\q|}(\Mgnbar)$. The integral
$$\Big\langle\psi(\q)\ ,\ [G]\Big\rangle=\int_{[G]}\psi(\q)
=\int_{(\pi^{k}_{g,n})^*[G]}\left(\prod_{j=1}^k \psi_{n+j}^{a_j+1}\right)\in \Q$$
may be computed in terms of the integrals of the top degree $\psi$ classes
over the moduli spaces $\Mgnbar$ as follows.
 The components of
$(\pi^{k}_{g,n})^*[G]$ are indexed by the functions
$$j:\big\{1,2,...,k\big\}\lra V(G).$$
Here $j(p)$ corresponds to  the component $\Modbar(j(p))$ of $\Ccal(G)$ which
contains the image of the $(n+p)$-th marked point under the forgetful map.
Let us denote the space of all such maps by $[k,G]$.
For $j\in[k,G]$ let us denote the corresponding component of $(\pi^{k}_{g,n})^*[G]$
by $[G,j]$.
Define
$$\psi_i(\q,j):=\prod_{p\in j^{-1}(i)}\psi_{n+p}^{a_p+1},
\ \ \ \forall\ j\in[k,G],\ i\in V(G).$$
We may then compute
\begin{equation}\label{eq:int-over-cycles}
\begin{split}
\int_{(\pi^{k}_{g,n})^*[G]}\left(\prod_{j=1}^k \psi_{n+j}^{a_j+1}\right)&=
\sum_{j\in[k,G]}\int_{[G,j]}\left(\prod_{j=1}^k \psi_{n+j}^{a_j+1}\right)\\
&=\sum_{j\in[k,G]}\left(\prod_{i\in V(G)}
\int_{\left[\Modbar_{g_i,n_i+d_i+|j^{-1}(i)|}\right]}\psi_i(\q,j)\right).
\end{split}
\end{equation}

The degree of $\psi_i(\q,j)$ may be computed as
$$\deg\left(\psi_i(\q,j)\right)=\sum_{p\in j^{-1}(i)}(a_p+1)=
\left|j^{-1}(i)\right|+\sum_{p\in j^{-1}(i)}a_p 
.$$
The integrals in the last line of (\ref{eq:int-over-cycles})
are trivial unless the degree of $\psi_i(\q,j)$
matches the dimension of  $\Modbar_{g_i,n_i+d_i+|j^{-1}(i)|}$,
i.e. if and only if
$$\sum_{p\in j^{-1}(i)}a_p=3g_i-3+n_i+d_i.$$
Moreover, the value  computed in equation~\ref{eq:int-over-cycles}
does not depend on the graph $G$, and only depends on a  
modified weight multi-set. Let
$$Q=\Big\{(g,n)\in \Z^{\geq 0}\times \Z^+\ \big|\ 2g+n>2\Big\}.$$

If  $V(G)=\{1,...,m\}$, the modified weight multi-set associated with
$G$ is the multi-set
\begin{displaymath}
\begin{split}
&\p_G:=\Big(\theta_G(i)\in Q\ \big|\ i\in\{1,...,m\}\Big),\ \ \ \text{where}\\
&\theta_G(i):=(g_i,m_i=n_i+d_i),\ \ \ \forall\ 1\leq i\leq m.\\
\end{split}
\end{displaymath}
In other words, $\theta_G(i)$ records the genus and the total number of 
the marked points (i.e. marked points and nodes) on the component corresponding 
to the vertex $i$, and $\p_G$ records all the pairs corresponding to the vertices 
of $G$ (in a sense) regardless of the structure of $G$ as a graph.
We may assume that $3g_i-3+m_i>0$ for $i=1,...,k$ and $(g_i,m_i)=(0,3)$ for 
$i=k+1,...,m$.\\

Let $\q(\p)$ denote the multi-set 
$(3g_i-3+m_i)_{i=1}^k$, which corresponds to a partition of $d=\dim([G])$.
For a partition $\q\in \PP(d)$, let $\QQ(\q;g,n)$ denote the set of all 
multi-sets $\p=\{(g_i,m_i)\}_{i=1}^m$ of elements of 
 $Q$ such that $\q(\p)=\q$ and there is a \SCC $G$ with 
$$\p=\p_G,\ \ g=g(G),\ \ \text{and}\ \ n=n(G).$$
If $\q=(a_1\geq a_2\geq ...\geq a_k>0)$ is a partition of $d$ and 
$\p\in \QQ(\q;g,n)$,
after possible re-arrangement of the indices we have 
\begin{itemize}
\item $m\geq k,\ \ (g_i,m_i)=(0,3) \ \ \text{for}\ k<i\leq m$.
\item $0\leq g_i \leq \left\lfloor\frac{a_i+2}{3}\right\rfloor\ \ \text{for}\ 
1\leq i\leq k$.
\item $(k+d+2)-(2g+n)\leq \sum_{i=1}^k g_i \leq g$.
\end{itemize}
The last inequality follows since 
$$g=\left(\sum_{i=1}^kg_i\right)+|E(G)|-|V(G)|+1=
\left(\sum_{i=1}^kg_i\right)+(3g-3+n-d)-m+1,$$
and $m\geq k$.
\begin{lem}\label{lem:p-cycles}
For every $\q=(a_1\geq a_2\geq ...\geq a_k>0)\in P(d)$ and every multi-set $\p$ 
of $m$  elements $\{\theta_i=(g_i,m_i)\}_{i=1}^m$ in $Q$ the following are true.\\
\begin{enumerate}
\item The elements of 
$\QQ(\q;g,n)$ are in correspondence with the choice of genera $(g_1,...,g_k)$ 
satisfying the following two relations.
\begin{displaymath}
\begin{split}
&0\leq g_i \leq \left\lfloor\frac{a_i+2}{3}\right\rfloor\ \ \ \forall\ \ i=1,...,k,\\ 
&(d+k+2)-(2g+n)\leq \sum_{i=1}^k g_i \leq g .
\end{split}
\end{displaymath}
\item For every integer $e\geq m-1$ such that
\begin{displaymath}
\begin{split}
\sum_{i=1}^m m_i\geq 2e
\end{split}
\end{displaymath}
there is a corresponding connected \SCC $G=G(\p,e)$ with $e$ edges
such that $\p=\p_G$.
\end{enumerate}
\end{lem}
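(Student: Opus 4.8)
The plan is to prove Part~(2) first, as an explicit graph construction, and then to deduce Part~(1) from it by a bookkeeping argument resting on two identities. Throughout, for a \SCC $G$ with vertex set $V(G)=\{1,\dots,m\}$ and $e=|E(G)|$ edges, summing the vertex degrees gives $\sum_i d_i = 2e$, so that
\[
\sum_{i=1}^m m_i \;=\; \sum_{i=1}^m (n_i+d_i) \;=\; n(G) + 2e ,
\]
while the definition of $g(G)$ rearranges to $g(G) = \sum_i g_i + e - m + 1$. These two relations are the only structural inputs needed.

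For Part~(2), I would realize the prescribed multi-set $\p=\{(g_i,m_i)\}_{i=1}^m$ as $\p_G$ by first choosing vertex degrees $d_i$ with $1\le d_i\le m_i$ and $\sum_i d_i = 2e$ supported on a connected multigraph, and then setting $n_i:=m_i-d_i\ge 0$; stability is then automatic, since $2g_i+n_i+d_i = 2g_i+m_i>2$ because $(g_i,m_i)\in Q$. The graph I would build in two stages. First a spanning tree: choose a degree sequence $t_1,\dots,t_m$ with $1\le t_i\le\min(m_i,m-1)$ and $\sum_i t_i = 2m-2$, which exists because $\sum_i\min(m_i,m-1)\ge 2m-2$ (a short case distinction according to how many $m_i$ are at least $m-1$), and any positive integer sequence of sum $2m-2$ is the degree sequence of a tree on $m$ vertices. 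Second, the remaining $e-(m-1)$ edges: after the tree the residual capacities $m_i-t_i$ sum to $\sum_i m_i - (2m-2)\ge 2e-(2m-2)=2\big(e-(m-1)\big)$, so I add the leftover edges one at a time, each consuming two residual units (a loop where capacity is concentrated at one vertex, an ordinary edge otherwise). The degenerate case $m=1$ is immediate, all $e$ edges being loops at the single vertex. This produces a connected \SCC $G(\p,e)$ with $e$ edges and $\p_G=\p$, as required.

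For Part~(1), fix the partition $\q=(a_1\ge\cdots\ge a_k>0)$. Under the normalization $(g_i,m_i)=(0,3)$ for $i>k$ and $3g_i-3+m_i=a_i$ for $i\le k$, the relation $m_i=a_i+3-3g_i$ shows that an element $\p\in\QQ(\q;g,n)$ is recovered from the genera $(g_1,\dots,g_k)$ alone, and conversely these genera may be prescribed. Substituting $m_i=a_i+3-3g_i$ and $\sum a_i=d$ into the two identities of the first paragraph (with $g(G)=g$, $n(G)=n$) and solving yields
\[
e \;=\; 3g-3+n-d, \qquad m \;=\; (2g+n-2-d)+\textstyle\sum_{i=1}^k g_i ,
\]
so the number of edges is fixed while the number of vertices grows with $\sum g_i$. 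The two inequalities of Part~(1) are then exactly the constraints for this data to come from a graph: $(g_i,m_i)\in Q$ reads $g_i\le\lfloor (a_i+2)/3\rfloor$ (the condition $m_i\ge1$ being binding, stability automatic); connectivity $e\ge m-1$ reads $\sum g_i\le g$; and $m\ge k$ reads $\sum g_i\ge (d+k+2)-(2g+n)$. For the converse I read off $m,e$ from the displayed formulas, append $m-k$ vertices $(0,3)$, and note $\sum_i m_i - 2e = n>0$, whence $\sum_i m_i\ge 2e$ and $e\ge m-1$; Part~(2) then supplies a graph $G$ with $\p_G=\p$, and the two identities force $g(G)=g$, $n(G)=n$, i.e. $\p\in\QQ(\q;g,n)$.

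The routine portion is the bookkeeping of the last paragraph; the genuine difficulty is the connected realization in Part~(2). The subtle point is that merely having enough total capacity, $\sum_i m_i\ge 2e$, does not by itself guarantee a connected graph, since the capacity might be concentrated on too few vertices to string them all together; what rescues the spanning-tree stage is the sharper inequality $\sum_i\min(m_i,m-1)\ge 2m-2$. I expect the crux of the write-up to be establishing this inequality and verifying that the hypotheses $e\ge m-1$ and $\sum_i m_i\ge 2e$ indeed imply it.
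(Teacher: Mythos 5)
Your proposal is correct, and your Part~(1) is essentially the paper's own bookkeeping: the paper likewise derives the three constraints from $(g_i,m_i)\in Q$, $m\geq k$, and $e\geq m-1$ (via $g=\sum g_i+e-m+1$), and for the converse sets $m_i:=a_i-3g_i+3$, appends $(0,3)$'s, computes $\sum_{i=1}^m m_i=n+2e\geq 2e$, and invokes Part~(2) -- exactly your route. Where you genuinely diverge is the graph construction in Part~(2). The paper argues by induction on $m$ with a case split on the minimal weight $m_1$: if $m_1\geq 2$ it reserves two points per vertex to string all vertices along a path $1-2-\cdots-m$ and places the remaining $e-(m-1)$ edges as arbitrary disjoint pairs of leftover points; if $m_1=1$ it deletes that vertex, applies the inductive hypothesis with $e'=e-1$, and reattaches vertex $1$ by a single edge to some vertex with $n_i\geq 1$, whose existence requires the counting argument in the paper's equation~(\ref{eq:graph}). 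Your construction is direct and uniform: realize a positive integer sequence of sum $2m-2$ as the degree sequence of a spanning tree, then spend the residual capacity $\sum m_i-(2m-2)\geq 2(e-m+1)$ on loops or extra edges. This eliminates both the induction and the case split (vertices with $m_i=1$ are simply leaves of the tree), at the modest cost of the standard tree-realizability fact; both constructions rely on loops being admissible, which the paper's definition indeed permits. One remark: the step you flag as the crux is less subtle than you suggest. Since any positive sequence with sum $2m-2$ automatically has every entry at most $m-1$ (the other $m-1$ entries already contribute $m-1$), the cap $t_i\leq m-1$ is vacuous, and the existence of the $t_i$ needs only $m\leq 2m-2\leq \sum_i m_i$, immediate from $m_i\geq 1$ and $\sum_i m_i\geq 2e\geq 2m-2$ once $m\geq 2$ (with $m=1$ handled by loops, as you do); your inequality $\sum_i\min(m_i,m-1)\geq 2m-2$ is true but not actually needed.
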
 
\begin{proof}
We have already seen that for $\p\in \QQ(\q;g,n)$ 
the properties stated before Lemma~\ref{lem:p-cycles} are satisfied.
This immediately gives the genera $(g_1,...,g_k)$. On the other hand, 
if $(g_1,...,g_k)$ are given as above, one may set
$m_i:=a_i-3g_i+3$, for $i=1,...,k$, and $(g_i,m_i)=(0,3)$ for 
$k<i\leq m$, where 
$$m=2g-2+n+\sum_{i=1}^k(g_i-a_i)\geq k.$$
Consider the multi-set $\p=((g_i,m_i))_{i=1}^m$. The desired  
\SCC $G$ corresponding  to 
$\p$ should have $e=3g-3+n-d$ edges. Since
$$\sum_{i=1}^m m_i=3m+d-3\sum_{i=1}^kg_i=n+2e\geq 2e,$$
the second part of the lemma implies that there is a \SCC $G$ 
which corresponds to $\p$. It thus suffices to prove the second 
claim of the lemma.\\

 Suppose that a multi-set
$\p=(g_i,m_i)_{i=1}^m$ is given as above, and that 
$\sum_{i=1}^mm_i\geq 2e\geq (2m-2)$.
Assume that $m_1$ is the smallest of all $m_i$. If $m_1\geq 2$ let
$A$ be a set of $\sum_{i=1}^mm_i$ elements 
$$A=\{(i,j)|i=1,...,m, j=1,...,m_i\},$$
and $B$ denote a set of $e$ (disjoint) pairs of elements of $A$, where 
$m-1$ of the pairs are the following
$$((1,2),(2,1)),((2,2),(3,1)),...,((m-1,2),(m,1)).$$
This is possible since the total number of points in $A$ is at least $2e$, $e\geq m-1$,
and each $m_i$ is at least $2$. 
Let $G$ be the graph with vertices $V(G)=\{1,...,m\}$ and 
$E(G)$ be a set of $e$ edges, where for every pair
$((i,p_i),(j,p_j))\in B$ we draw an edge between $i$ and $j$ in $E(G)$.
Finally, let $\epsilon_G(i)=(g_i,m_i-d_i)$. It is straightforward to check that 
$G$, together with the weight function $\epsilon_G$ is a \SCC.\\

If $m_1=1$, let
$$\p'=(g_i,m_i)_{i=2}^m,\ \ \ e'=e-1.$$
Clearly, $e'\geq m-2$, and using an induction on the number of vertices,
there is a corresponding \SCC $G'$ with $e'$ edges and $\p'=\p_{G'}$.
Suppose that the vertex $i\in\{2,...,m\}$ has degree $d_i$ in $G'$ and
assume that $\epsilon_{G'}(i)=(g_i,n_i)$. Thus we have
$m_i=d_i+n_i$. Note that
\begin{equation}\label{eq:graph}
\begin{split}
&2e-2
=\left(\sum_{i=1}^mm_i\right)-
\left(\sum_{i=2}^mn_i\right)-1\geq 2e-\left(\sum_{i=2}^mn_i\right)-1\\
\Rightarrow\ \ \ &
\sum_{i=2}^m n_i\geq 1. 
\end{split}
\end{equation}

Let $G$ be the \SCC obtained by adding a vertex
$1$ to $G'$, assigning the genus $g_1$ to $1$, and letting $n_1=0$. We attach
one 
edge connecting $1$ to one of the vertices
corresponding to the non-zero values of $n_i,\ i=2,...,m$.
The last inequality in (\ref{eq:graph}) implies that this
is always possible. 
The stable weighted graph $G$ will then have the required properties.
This completes the argument in the second case (i.e. $m_1=1$) by induction on the
number $m$ of vertices.\\
\end{proof}

\section{Preliminaries on combinatorial cycles}
\label{sec:argument}
Let us assume that the genus $g$, the number $n$ of the marked points and 
the degree $d$ are given as before.
 Set $e=3g-3+n-d$, and let $\QQ(d;g,n)$ denote the union of 
 $\QQ(\q;g,n)$ for $\q\in \PP(d)$, and $\PP(d;g,n)$ denote the subset of
$\PP(d)$ consisting of $\q\in \PP(d)$ which are of the form $\q(\p)$ for
some $\p\in \QQ(d;g,n)$.

\begin{prop}\label{prop:rank-of-combinatorial-kappa-ring}
For $d\leq n+3g-3$ we have $\PP(d;g,n)=\PP(d,3g-2+n-d)$.
\end{prop}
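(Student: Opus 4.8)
The plan is to prove the set equality by establishing the two inclusions separately, using the explicit combinatorial characterization of $\PP(d;g,n)$ provided by the first part of Lemma~\ref{lem:p-cycles}. Recall that $\q=(a_1\geq\dots\geq a_k)\in\PP(d)$ lies in $\PP(d;g,n)$ precisely when there exists a choice of genera $(g_1,\dots,g_k)$ satisfying $0\leq g_i\leq\lfloor(a_i+2)/3\rfloor$ for all $i$ together with the inequality $(d+k+2)-(2g+n)\leq\sum_{i=1}^k g_i\leq g$. Meanwhile, by definition $\PP(d,3g-2+n-d)$ consists of those partitions of $d$ with at most $3g-2+n-d$ parts, i.e.\ those $\q$ with $k\leq 3g-2+n-d$. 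So the entire statement reduces to showing that, among partitions of $d$, the realizability condition on the $g_i$ is equivalent to the single bound $k\leq 3g-2+n-d$.

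First I would prove the inclusion $\PP(d;g,n)\subseteq\PP(d,3g-2+n-d)$, which is the easy direction. If $\q\in\PP(d;g,n)$ with $k$ parts, then some admissible $(g_1,\dots,g_k)$ exists, and in particular the left inequality $(d+k+2)-(2g+n)\leq\sum_{i=1}^k g_i$ holds. Since each $g_i\leq\lfloor(a_i+2)/3\rfloor\leq(a_i+2)/3$, summing gives $\sum_{i=1}^k g_i\leq(d+2k)/3$. Combining with the left inequality yields $(d+k+2)-(2g+n)\leq(d+2k)/3$, which I would rearrange to a bound on $k$; a cleaner route is simply to observe that the feasibility of the lower bound on $\sum g_i$ forces $k$ not to be too large. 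I expect the arithmetic to deliver exactly $k\leq 3g-2+n-d$ (perhaps after using $d\leq 3g-3+n$), showing $\q$ has at most $3g-2+n-d$ parts.

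For the reverse inclusion $\PP(d,3g-2+n-d)\subseteq\PP(d;g,n)$, I must show that any partition $\q$ of $d$ with $k\leq 3g-2+n-d$ parts admits an admissible genus assignment. The natural candidate is to set as many $g_i$ as possible to their maximum and the rest to zero, or conversely to take the all-zero assignment and check the lower bound. With all $g_i=0$ the upper bound $\sum g_i\leq g$ is automatic, so the only thing to verify is the lower bound $(d+k+2)-(2g+n)\leq 0$, i.e.\ $k\leq 2g+n-d-2$. This is weaker than $k\leq 3g-2+n-d$ only when $g\geq 0$, so the all-zero choice suffices exactly when $d+k+2\leq 2g+n$; when $k$ is larger (between $2g+n-d-1$ and $3g-2+n-d$) I would instead raise some of the $g_i$ above zero to meet the lower bound, using that the total available genus $\sum\lfloor(a_i+2)/3\rfloor$ is large enough. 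The hard part will be this raising step: I must check that for $\q$ with up to $3g-2+n-d$ parts, the maximum achievable $\sum_{i=1}^k\lfloor(a_i+2)/3\rfloor$ always reaches or exceeds the required lower threshold $(d+k+2)-(2g+n)$ while the target $g$ is simultaneously attainable, which amounts to a careful counting argument balancing the number of parts against the sizes $a_i$. I anticipate invoking $d\leq n+3g-3$ here to close the estimate, and handling the boundary cases (e.g.\ parts equal to $1$, where $\lfloor(a_i+2)/3\rfloor=1$) explicitly.
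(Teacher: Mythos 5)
Your reduction of the statement to the feasibility of the two constraints in Lemma~\ref{lem:p-cycles}(1) is legitimate and close in spirit to the paper's starting point, but both halves of your plan have problems, one minor and one fatal. In the forward direction, the arithmetic you propose does not close: combining $(d+k+2)-(2g+n)\le\sum_i g_i$ with $g_i\le (a_i+2)/3$ gives only $k\le 6g+3n-2d-6$, and since $d\le 3g-3+n$ this is weaker than the target $k\le 3g-2+n-d$ by the margin $3g+2n-d-4\ge n-1\ge 0$. You paired the lower bound with the wrong upper bound: pairing it instead with the \emph{other} constraint in the same display, $\sum_i g_i\le g$, yields $k\le 3g-2+n-d$ in one line, which is in effect what the paper does when it subtracts its inequalities (i) and (ii). This slip is easily repaired.

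The genuine gap is the reverse inclusion, whose entire content you defer to an unspecified ``careful counting argument'' that the available genus $\sum_i\lfloor (a_i+2)/3\rfloor$ reaches the threshold $(d+k+2)-(2g+n)$. No soft estimate of this kind can work, because in the regime $2g-2+n<d\le 3g-3+n$ the required feasibility is sharp and can fail outright: for $g=3$, $n=1$, $d=6$ and $\q=(3,3)$ one has $k=2=3g-2+n-d$, yet the caps give $\sum_i g_i\le\lfloor 5/3\rfloor+\lfloor 5/3\rfloor=2$ while the lower threshold is $(d+k+2)-(2g+n)=3$, so no admissible genus vector exists and $\q\notin\PP(d;g,n)$. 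Parts divisible by $3$ are exactly the danger zone: at this example the paper's own case-two construction $g_i=\lfloor a_i/3\rfloor+1-\epsilon_i$, $m_i=a_i-3\lfloor a_i/3\rfloor+3\epsilon_i$ produces $m_i=0$, i.e.\ a pair outside $Q$, so even the published argument glosses over this boundary. More to the point for your proposal: the paper's proof of the reverse inclusion is not a uniform estimate but a case split --- for $d\le 2g-2+n$ an explicit assignment ($g_i=1$ for $i\le\min\{g,k\}$, $g_i=0$ otherwise, with $m_i$ chosen accordingly and the graph supplied by Lemma~\ref{lem:p-cycles}(2) after checking $\sum_i m_i\ge 2e$), and for $d=2g-2+n+r$ with $0<r<g$ the separate claim $\sum_i\lfloor a_i/3\rfloor\ge r$, proved by a contradiction estimate, followed by an explicit choice of the $\epsilon_i$ --- and your sketch supplies none of these constructions. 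Without them, and without attention to the $3\mid a_i$ boundary where the needed inequality genuinely fails, the proof cannot be completed as outlined.
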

\begin{proof}
We  first show that $\PP(d;g,n)\subset\PP(d,3g-2+n-d)$.
Let $\q=\q(\p)$ for $\p=\p_G=(g_i,m_i)_{i=1}^m$ which corresponds to a 
\SCC $G$. We may assume that
\begin{displaymath}
\begin{split}
&a_i=3g_i-3+m_i,\ \ \ \ \text{for}\ \ 1\leq i\leq k,\\
&g_i=m_i-3=0,    \ \ \ \ \ \ \text{for}\ \ k<i\leq m.
\end{split}
\end{displaymath}
From here
\begin{displaymath}
\begin{split}
2g&=2\left(\sum_{i=1}^kg_i\right)+2|E(G)|-2|V(G)|+2\\
&=2\left(\sum_{i=1}^kg_i\right)+\left(\left(\sum_{i=1}^mm_i\right)-n\right)-2m+2.
\end{split}
\end{displaymath}
We thus have
\begin{displaymath}
\begin{split}
(i)\ \ \ \ &\sum_{i=1}^k(2g_i-2+m_i)=2g-2+n+(k-m)\leq 2g-2+n,\\
(ii)\ \ \ \ &\sum_{i=1}^k(2g_i-3+m_i)
=\left(\sum_{i=1}^k(3g_i-3+m_i)\right)-\left(\sum_{i=1}^kg_i\right)\geq d-g\\
&(i)-(ii)\ \Rightarrow\ \ k\leq 3g-2+n-d.
\end{split}
\end{displaymath}

For the reverse inclusion,  first suppose that $d\leq n+2g-2$, and that a partition
$\q=(a_1,...,a_k)$
of $d$ with $k\leq 3g-2+n-d$  is given. Define
\begin{displaymath}
\begin{split}
&r:=\min\{g,k\},\ \ \ m:=(2g-2+n-d)+\min\{g,k\}\\
&g_i:=\begin{cases}
1\ \ \ \ \ &\text{if }\ \ 1\leq i\leq r\\
0\ \ \ \ \ &\text{if }\ \ r< i\leq m
\end{cases},\ \ \& \ \ \
m_i:=\begin{cases}
a_i\ \ \ \ &\text{if }\ \ 1\leq i\leq r\\
a_i+3 &\text{if }\ \ r<i\leq k\\
3&\text{if }\ \ k<i\leq m
\end{cases},\\
& \p:=\left((g_i,m_i)\ \big|\ i\in\{1,...,m\}\right).
\end{split}
\end{displaymath}
The assumptions on $d$ and $k$ imply that $r\geq 0$, while $\q(\p)=\q$. Moreover, 
 \begin{displaymath}
 \sum_{i=1}^mm_i=d+3(m-r)=n+2(3g-3+n-d)\geq 2(3g-3+n-d).
 \end{displaymath}
 Then Lemma~\ref{lem:p-cycles} implies that $\p\in \QQ(d;g,n)$, and consequently,
$\q\in\PP(d;g,n)$.\\

Thus, it suffices to prove the inclusion for
$d=2g-2+n+r$, where $0<r<g$, i.e. to show that
$$\PP(d,g-r)\subset \PP(d;g,n).$$
Let  $\q=(a_1\geq ...\geq a_k>0)$
be an element of $\PP(d,g-r)$. We first claim that
$$\sum_{i=1}^k \left\lfloor\frac{a_i}{3}\right\rfloor\geq r.$$
If the above inequality is not satisfied 
\begin{displaymath}
\begin{split}
r-1&\geq \sum_{i=1}^k \left\lfloor\frac{a_i}{3}\right\rfloor \geq \sum_{i=1}^k\frac{a_i-2}{3}=\frac{d-2k}{3}\\
&\geq \frac{(2g-2+n+r)-2(g-r)}{3}=\frac{3r-2}{3}.
\end{split}
\end{displaymath}
This contradiction proves the claim. \\

Choose the
integers $0\leq \epsilon_i\leq \lfloor\frac{a_i}{3}\rfloor$, $i=1,...,k$
so that
$$\left(\sum_{i=1}^k \left\lfloor\frac{a_i}{3}\right\rfloor\right)
-\left(\sum_{i=1}^k \epsilon_i\right)=r.$$
Set
\begin{displaymath}
\begin{split}
&g_i:=\left\lfloor\frac{a_i}{3}\right\rfloor+1-\epsilon_i,\ \ \ \&\\
&m_i:=a_i-3\left\lfloor\frac{a_i}{3}\right\rfloor+3\epsilon_i,\\
\Rightarrow\ \ \ \ &3(g_i-1)+m_i=a_i,\ \ \ i=1,...,k.
\end{split}
\end{displaymath}
Note that $(g_i,m_i)\in Q$, and $\p=(g_i,m_i)_{i=1}^k$ is a multi-set with
$\q=\q(\p)$. For $e=(3g-3+n)-d=g-1-r$ we have
$n=n(\p,e)$ and $g=g(\p,e)$. Moreover, we have
\begin{displaymath}
\sum_{i=1}^{k}m_i=d-3r=2(g-1-r)+n\geq 2(g-1-r)=2e,
\end{displaymath}
and Lemma~\ref{lem:p-cycles} implies that $\p\in \QQ(d;g,n)$.
Thus $\q\in \PP(d;g,n)$, and the proof is complete.
\end{proof}

\section{The general strategy for obtaining lower bounds}
\label{sec:strategy}
Let us define a partial order on $\PP(d)$  by setting $\q_1\lhd \q_2$ if $\q_1$ refines
$\q_2$. Thus, $\langle \q,\p\rangle$ is non-zero only if $\q$ refines $\q(\p)$.
For $\p=((g_i,m_i))_{i=1}^m$  
\begin{displaymath}
\begin{split}
\Lambda(\p):=\langle \q(\p),\p\rangle=\prod_{i=1}^m\frac{1}{24^{g_i}\times g_i!}\neq 0
\end{split}
\end{displaymath}
The rank $(r(d;g,n)$ of  the matrix
$$R(d;g,n):=\left(\frac{1}{\Lambda(\p)}\Big\langle \q,\p\Big\rangle\right)_{
\substack{\q\in \PP(d)\\ \p\in \QQ(d;g,n)}}$$
is a lower bound for the rank of $\kappa^d(\Mgnbar)$.
Let $\langle \kappa_1,...,\kappa_{3g-3+n}\rangle^d_\Q$ denote the
free $\Q$ module (formally) generated by the $\kappa$ classes in degree $d$.
The matrix $R(d;g,n)$ gives a surjective linear map
\begin{displaymath}
R(d;g,n):\Big\langle \kappa_1,...,\kappa_{3g-3+n}\Big\rangle^d_\Q
\lra \Q^{r(d;g,n)},
\end{displaymath}
and  a surjection
\begin{displaymath}
\jmath_{g,n}^d:\kappa^d\left(\Mgnbar\right)\hookrightarrow
\frac{\Big\langle \kappa_1,...,\kappa_{3g-3+n}
\Big\rangle^d_\Q}{\Ker\left(R(d;g,n)\right)}.
\end{displaymath}
\begin{defn}
We call a class $\psi\in \Acal^*(\Mgnbar)$ {\emph{combinatorially trivial}}
if for every  \SCC $G$ 
$$\int_{[G]}\psi=0.$$
We denote the quotients of $\Acal^*(\Mgnbar)$ and $\kring$ by combinatorially trivial
tautological classes by $\Acal^*_c(\Mgnbar)$ and $\kringc$ respectively.
\end{defn}
We may summarize our considerations in the following proposition.
\begin{prop}\label{prop:main-1}
The  induced map
$$\jmath_{g,n}^d:\kappa^d_c\left(\Mgnbar\right)\ra
\frac{\Big\langle \kappa_1,...,\kappa_{3g-3+n}
\Big\rangle^d_\Q}{\Ker\left(R(d;g,n)\right)}$$
is an isomorphism.
\end{prop}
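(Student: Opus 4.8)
<br>

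The plan is to unwind the definitions so that Proposition~\ref{prop:main-1} becomes a purely formal statement about the linear map $R(d;g,n)$. The key point is that two things are being asserted simultaneously: first, that the kernel of the natural projection $\kring^d \to \kappa^d_c(\Mgnbar)$ (i.e.\ the space of combinatorially trivial kappa classes in degree $d$) coincides with the pullback of $\Ker(R(d;g,n))$ under the presentation $\langle\kappa_1,\dots,\kappa_{3g-3+n}\rangle^d_\Q \twoheadrightarrow \kring^d$; and second, that once this is established the induced map $\jmath_{g,n}^d$ is a well-defined isomorphism. So the first thing I would do is write down the commutative square relating the free module $\langle\kappa_1,\dots,\kappa_{3g-3+n}\rangle^d_\Q$, its quotient $\kring^d$, and the target $\Q^{r(d;g,n)}$, and reduce the proposition to the single kernel-equality claim.

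The heart of the argument is to show that a kappa class is combinatorially trivial \emph{if and only if} it lies in $\Ker(R(d;g,n))$. The reverse-to-forward directions both rest on the computation in equation~\eqref{eq:int-over-cycles}, which expresses $\int_{[G]}\psi(\q)$ entirely in terms of the modified weight multi-set $\p_G$ rather than the graph $G$ itself. Concretely, I would argue: by Lemma~\ref{lem:p-cycles}(2), every admissible multi-set $\p\in\QQ(d;g,n)$ is realized as $\p_G$ for \emph{some} \SCC $G$, and conversely every \SCC $G$ (with the right $g,n,d$) produces a $\p_G\in\QQ(d;g,n)$; moreover the integral $\int_{[G]}\psi(\q)$ depends only on $\p_G$. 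Hence the family of functionals $\{\int_{[G]}\}$ ranging over all \SCC's spans exactly the same subspace of the dual of $\kring^d$ as the family $\{\langle\,\cdot\,,\p\rangle\}_{\p\in\QQ(d;g,n)}$. Since the entries of $R(d;g,n)$ are precisely $\tfrac{1}{\Lambda(\p)}\langle\q,\p\rangle$ and $\Lambda(\p)\neq 0$, rescaling by the nonzero constants $\Lambda(\p)$ does not change the kernel, so the common kernel of all the $\int_{[G]}$ equals $\Ker(R(d;g,n))$. That is exactly the statement that combinatorial triviality in degree $d$ is detected by $R(d;g,n)$.

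With the kernel equality in hand, the remainder is bookkeeping. By definition $\kappa^d_c(\Mgnbar)$ is the quotient of $\kring^d$ by the combinatorially trivial classes, which we have just identified with the image of $\Ker(R(d;g,n))$; so $\kappa^d_c(\Mgnbar) \cong \langle\kappa_1,\dots,\kappa_{3g-3+n}\rangle^d_\Q / \Ker(R(d;g,n))$, and the induced map $\jmath_{g,n}^d$ is this very isomorphism. It is automatically surjective because $R(d;g,n)$ is surjective onto $\Q^{r(d;g,n)}$ by construction (its rank is $r(d;g,n)$), and injective because we have quotiented by exactly the kernel.

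I expect the main obstacle to be the first half of the kernel-equality step: carefully justifying that passing from the $g$- and $n$-dependent list of actual combinatorial cycles $[G]$ to the graph-independent set $\QQ(d;g,n)$ of modified weight multi-sets loses no information about the kernel. This requires invoking Lemma~\ref{lem:p-cycles} in both directions (realizability of each $\p$ by a graph, and the fact that $\QQ(d;g,n)$ exhausts all $\p_G$), together with the graph-independence of the integral established around equation~\eqref{eq:int-over-cycles}. The purely linear-algebraic conclusion afterwards is routine, but this translation between the geometric family $\{[G]\}$ and the combinatorial index set $\QQ(d;g,n)$ is where all the real content sits.
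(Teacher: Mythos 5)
Your proposal is correct and follows essentially the same route as the paper, which offers no separate argument beyond the sentence ``We may summarize our considerations in the following proposition'': the content is exactly what you spell out, namely that the integrals $\int_{[G]}\psi(\q)$ depend only on the modified weight multi-set $\p_G$ (equation~\eqref{eq:int-over-cycles}), that Lemma~\ref{lem:p-cycles} identifies the set of realizable multi-sets with $\QQ(d;g,n)$, and that rescaling by the nonzero constants $\Lambda(\p)$ leaves the kernel unchanged, so combinatorial triviality in degree $d$ is detected precisely by $\Ker(R(d;g,n))$. Your explicit reduction to the kernel-equality claim (implicitly using the invertible $\kappa$--$\psi$ change of basis of Lemma~\ref{lem:Faber-formula}) makes rigorous what the paper leaves as a summary, and nothing in it deviates from the paper's intended argument.
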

We would now like to describe a general strategy for achieving   
lower bounds for the rank of $\kappa^d(\Mgnbar)$. The strategy will be 
implemented in a few cases in the following sections.

\begin{defn}\label{def:fine-assignment}
Fix the genus $g$ and the number $n$ of the marked points for $\Mgnbar$,
as well as the degree $d$. Let $<$ denote a total ordering on 
$\PP(d,3g-2+n-d)$ which 
refines the partial order $\lhd$.
A \emph{fine assignment} (with respect to a subset $P$ of $\PP(d)$ 
and the total order $<$) is a function 
$f:P \ra \PP(d,3g-2+n-d)$ 
satisfying the following properties:\\
\begin{displaymath}
\begin{split}
&(1)\ \q\lhd f(\q)
\\
&(2)\ \q\lhd \q',\  \ \q'\in \PP(d,3g-2+n-d)\ \ \Rightarrow\ \ 
f(\q)<\q'. 
\\
\end{split}
\end{displaymath}
\end{defn}
Fix a fine assignment $f:P\ra \PP(d,3g-2+n-d)$. Consider a block decomposition of 
$R(d;g,n)$ where, for every $\q_0\in \PP(d,3g-2+n-d)$, the rows 
corresponding to all $\p\in \QQ(\q_0;g,n)$ belong to the same block, while the 
columns corresponding to all $\q\in f^{-1}(\q_0)$ belong to the same block as well.
In particular, associated with every such $\q_0$ we may introduce the matrix 
$$R_f(\q_0;g,n)=\left(
\frac{1}{\Lambda(\p)}\big\langle \q,\p\big\rangle
\right)_{\substack{\p\in \QQ(\q_0;g,n)\\ \q\in f^{-1}(\q_0)}},
$$
and will denote its rank by $r_f(\q_0;g,n)$. 
\begin{lem}\label{lem:rank-general}
Suppose that $f:P\ra \PP(d,3g-2+n-d)$ is a fine assignment as above. Then
$$r(d;g,n)\geq \sum_{\q\in \PP(d,3g-2+n-d)}r_f(\q;g,n).$$
\end{lem}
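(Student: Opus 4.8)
The plan is to realize the bound by block-triangularizing a suitable column-submatrix of $R(d;g,n)$ with respect to the total order $<$ on $\PP(d,3g-2+n-d)$, and then applying the elementary fact that the rank of a block-triangular matrix is at least the sum of the ranks of its diagonal blocks.

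First I would restrict $R(d;g,n)$ to the columns indexed by $P=\bigcup_{\q_0}f^{-1}(\q_0)$. Deleting columns can only decrease the rank, so the rank $r'$ of this submatrix $M$ satisfies $r(d;g,n)\geq r'$, and it suffices to bound $r'$ from below. Recall also that $\QQ(\q;g,n)=\emptyset$ unless $\q\in\PP(d;g,n)=\PP(d,3g-2+n-d)$ by Proposition~\ref{prop:rank-of-combinatorial-kappa-ring}, so the rows of $R(d;g,n)$, indexed by $\p\in\QQ(d;g,n)$, are exactly partitioned into the groups $\QQ(\q_0;g,n)$ as $\q_0$ ranges over $\PP(d,3g-2+n-d)$. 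Thus $M$ acquires a block structure: rows grouped by the value $\q(\p)=\q_0'$ and columns grouped by the value $f(\q)=\q_0$, with the diagonal block (row-group $\q_0$, column-group $\q_0$) being precisely $R_f(\q_0;g,n)$.

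The heart of the argument is to check that, after ordering both the row-groups and the column-groups by $<$, the matrix $M$ is block lower-triangular; that is, the block with row-group $\q_0'$ and column-group $\q_0$ vanishes whenever $\q_0>\q_0'$. For an entry in this block we have $\p$ with $\q(\p)=\q_0'$ and $\q$ with $f(\q)=\q_0$, and since $\langle\q,\p\rangle$ is non-zero only if $\q$ refines $\q(\p)=\q_0'$, I would assume $\q$ refines $\q_0'$. Because $\q_0'\in\PP(d,3g-2+n-d)$, the defining properties (1) and (2) of the fine assignment in Definition~\ref{def:fine-assignment} then give $f(\q)\leq\q_0'$, that is $\q_0\leq\q_0'$, with equality exactly in the diagonal case $\q_0=\q_0'=f(\q)$. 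Hence every non-zero block lies on or below the diagonal. The step I expect to require the most care is precisely this comparison, since it hinges on reading property (2) as the assertion that $f(\q)$ is the $<$-smallest coarsening of $\q$ inside $\PP(d,3g-2+n-d)$: any coarsening $\q_0'$ of $\q$ then satisfies $f(\q)\leq\q_0'$, with strict inequality unless $\q_0'=f(\q)$, which is what prevents the column-groups from collapsing to singletons.

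Finally I would invoke the block-triangular rank inequality. Listing the blocks as $\q_0^{(1)}<\cdots<\q_0^{(s)}$, I choose within each diagonal block $R_f(\q_0^{(t)};g,n)$ a set of $r_f(\q_0^{(t)};g,n)$ rows and columns spanning an invertible minor. Collecting all these chosen rows and columns yields a square submatrix of $M$ of size $\sum_t r_f(\q_0^{(t)};g,n)$; by the triangularity just established its above-diagonal blocks vanish and its diagonal blocks are the chosen invertible minors, so it is block lower-triangular with non-zero diagonal determinants and hence nonsingular. Therefore
\[
r(d;g,n)\ \geq\ r'\ \geq\ \sum_{\q\in\PP(d,3g-2+n-d)}r_f(\q;g,n),
\]
which is the desired inequality. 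The only genuinely non-formal input is the triangularity check above; everything else is bookkeeping with submatrices and the elementary rank estimate.
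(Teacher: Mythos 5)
Your proof is correct and follows essentially the same route as the paper: the paper's (two-sentence) argument likewise observes that the fine assignment induces a block decomposition of the column-submatrix of $R(d;g,n)$ indexed by $P$ that is triangular with respect to $<$, with the diagonal blocks being exactly the $R_f(\q;g,n)$, and then invokes the block-triangular rank bound. You have merely made explicit the details the paper leaves implicit --- the reduction to the column-submatrix, the reading of property (2) of Definition~\ref{def:fine-assignment} as saying $f(\q)$ is the $<$-minimal coarsening of $\q$ in $\PP(d,3g-2+n-d)$ (which is what makes the off-diagonal blocks on one side vanish), and the assembly of invertible minors into a nonsingular square submatrix --- all of which is sound.
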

\begin{proof}
The fine assignment $f$ determines a block decomposition of a sub-matrix of 
$R(d;g,n)$ which is upper triangular with respect to the order $<$.
Since the matrices $R_f(\q;g,n)$ correspond to the diagonal in this block form,
the above lemma follows.
\end{proof}
\begin{remark}\label{remark:1}
More generally, let $I$ be a totally ordered set with the order $<$ and $f_1:\PP(d;g,n)\ra I$
and $f_2:\QQ(d;g,n)\ra I$ be surjective functions so that 
\begin{itemize}
\item For $\q,\q'\in\PP(d;g,n)$ with $\q\lhd \q'$, $f_1(\q)<f_1(\q')$.
\item For $\p,\p'\in \QQ(d;g,n)$ with $\q(\p)\lhd \q(\p')$, $f_2(\p)<f_2(\p')$.
\item For $\q\in \PP(d;g,n)$ and $\p\in\QQ(d;g,n)$ with $\q\lhd \q(\p)$, 
 $f_1(\q)<f_2(\q)$.
\end{itemize}
Then $I$ determines a block decomposition of $R(d;g,n)$, and $R(d;g,n)$ is upper 
triangular with respect to this decomposition. Thus 
$$r(d;g,n)\leq \sum_{p\in P}\mathrm{rank}\left(R_{f_1,f_2}(p)\right)$$
where $R_{f_1,f_2}(p)$ is the sub-matrix of $R(d;g,n)$ determined by the columns 
corresponding to $f_1^{-1}(p)$ and the rows corresponding to $f_2^{-1}(p)$.  
\end{remark}
The restriction of every fine assignment to 
$P\cap \PP(d,3g-2+n-d)$ is the identity.
Theorem~\ref{thm:main-2} is now 
an immediate consequence of Lemma~\ref{lem:rank-general}.
\begin{cor}
The rank of $\kappa_c^d(\Mgnbar)$ is greater than or equal to $p(d,3g-2+n-d)$.
\end{cor}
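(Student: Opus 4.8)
The statement is essentially a restatement of Theorem~\ref{thm:main-2}, and the plan is to deduce it directly from Lemma~\ref{lem:rank-general} by choosing the most economical fine assignment. Assuming $d\leq 3g-3+n$ (otherwise $3g-2+n-d\leq 0$ and $p(d,3g-2+n-d)=0$, so the bound is vacuous), Proposition~\ref{prop:rank-of-combinatorial-kappa-ring} identifies $\PP(d;g,n)$ with $\PP(d,3g-2+n-d)$. In particular every $\q_0\in\PP(d,3g-2+n-d)$ equals $\q(\p)$ for some $\p$, so each set $\QQ(\q_0;g,n)$ is nonempty. I would then set $P:=\PP(d,3g-2+n-d)$ and take $f:P\ra\PP(d,3g-2+n-d)$ to be the identity; as noted just above, every fine assignment restricts to the identity on $P\cap\PP(d,3g-2+n-d)=P$, so this is the only---and a legitimate---choice.

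The point of this choice is that it makes each diagonal block a single column. For $\q_0\in\PP(d,3g-2+n-d)$ the column block $f^{-1}(\q_0)=\{\q_0\}$ is a singleton, so $R_f(\q_0;g,n)$ is the column vector with entries $\Lambda(\p)^{-1}\langle\q_0,\p\rangle$ indexed by $\p\in\QQ(\q_0;g,n)$. Since $\p\in\QQ(\q_0;g,n)$ means precisely that $\q(\p)=\q_0$, each such entry equals $\Lambda(\p)^{-1}\langle\q(\p),\p\rangle=\Lambda(\p)^{-1}\Lambda(\p)=1$. As $\QQ(\q_0;g,n)\neq\emptyset$, this column is nonzero, and hence $r_f(\q_0;g,n)=1$.

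Finally I would assemble the pieces. Lemma~\ref{lem:rank-general} gives
\[
r(d;g,n)\ \geq\ \sum_{\q_0\in\PP(d,3g-2+n-d)}r_f(\q_0;g,n)\ =\ \bigl|\PP(d,3g-2+n-d)\bigr|\ =\ p(d,3g-2+n-d),
\]
and the isomorphism of Proposition~\ref{prop:main-1} identifies $r(d;g,n)$ with the rank of $\kappa^d_c(\Mgnbar)$. There is no real obstacle here: all of the genuine work has already been carried out, in the triangular block estimate of Lemma~\ref{lem:rank-general}, in Proposition~\ref{prop:main-1}, and in the nonvanishing $\Lambda(\p)\neq 0$ recorded before the definition of $R(d;g,n)$. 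The only items requiring a line of verification---that the identity is a valid fine assignment, and that $\QQ(\q_0;g,n)$ is nonempty for each $\q_0\in\PP(d,3g-2+n-d)$---both follow immediately from Proposition~\ref{prop:rank-of-combinatorial-kappa-ring}.
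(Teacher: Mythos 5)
Your proof is correct and follows essentially the same route as the paper: the paper's own proof also takes $P=\PP(d;g,n)=\PP(d,3g-2+n-d)$, the identity fine assignment, and observes that $r_f(\q;g,n)=1$ for each $\q\in P$ before invoking Lemma~\ref{lem:rank-general}. Your version merely makes explicit the details the paper leaves implicit (the normalization $\Lambda(\p)^{-1}\langle\q(\p),\p\rangle=1$, the nonemptiness of $\QQ(\q_0;g,n)$ via Proposition~\ref{prop:rank-of-combinatorial-kappa-ring}, and the identification through Proposition~\ref{prop:main-1}), all of which are accurate.
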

\begin{proof}
Take $P=\PP(d;g,n)=\PP(d,3g-2+n-d)$, $f:P\ra P$ the identity map, 
and  $<$  any refinement of $\lhd$. Since $r_f(\q;g,n)=1$ for all
$\q\in P$, we are done.
\end{proof}

\section{The combinatorial kappa ring in codimension one}
In this section, we apply Lemma~\ref{lem:rank-general} to the study 
of the rank of $\kappa_c^{3g-4+n}(\Mgnbar)$. We will first handle the case
$g=1$ using explicit formulas for the integrals of the $\psi$ classes.\\
 
\subsection{The combinatorial kappa ring of $\Modbar_{1,n}$ in codimension one}
Denote the set of $k$-element subset of $N=\{1,...,n\}$ by 
${N\choose k}$. For every $n$-tuple of real numbers 
$a_1,...,a_n$, denote the $i$-th symmetric product of them by 
$\sig_i(a_1,...,a_n)$. In other words, $\sig_0(a_1,...,a_n)=1$ and
\begin{displaymath}
\sig_i(a_1,...,a_n)=
\sum_{1\leq j_1<j_2<...<j_i\leq n}\left(\prod_{p=1}^ia_{j_p}\right).
\end{displaymath}
\begin{thm}\label{thm:symmetry-2}
Suppose that the non-negative integers $a_1,...,a_n$ are given so that 
$a_1+...+a_n=n$. Then
\begin{equation}\label{eq:psi-integrals}
\int_{\Modbar_{1,n}}\prod_{j=1}^n \psi_j^{a_j}=\frac{1}{24}{n\choose {a_1,...,a_n}}
\left(1-\sum_{i=2}^n\frac{\sig_i(a_1,...,a_n)}{i(i-1){n\choose i}}\right).
\end{equation}
\end{thm}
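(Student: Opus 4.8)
The plan is to prove Theorem~\ref{thm:symmetry-2} by reducing the integral $\int_{\Modbar_{1,n}}\prod_j\psi_j^{a_j}$ to known data via the standard $\psi$-class recursion relations (the string and dilaton equations together with the genus-$1$ dilaton/KdV structure), and then to recognize the resulting combinatorial expression as the stated symmetric-function formula. First I would record the two base cases that anchor the whole computation: the genus-$1$ one-point integral $\int_{\Modbar_{1,1}}\psi_1=\tfrac{1}{24}$, and the string equation
\begin{displaymath}
\int_{\Modbar_{1,n+1}}\psi_1^{b_1}\cdots\psi_n^{b_n}\psi_{n+1}^{0}
=\sum_{j\,:\,b_j>0}\int_{\Modbar_{1,n}}\psi_1^{b_1}\cdots\psi_j^{b_j-1}\cdots\psi_n^{b_n},
\end{displaymath}
along with the dilaton equation for the $\psi_{n+1}^1$ insertion, which contributes a factor $(2g-2+n)=n$ in genus $1$. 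These are exactly the equations the introduction promises to use.

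The core of the argument is an induction on $n$. Given a tuple $(a_1,\dots,a_n)$ with $\sum a_j=n$, at least one exponent is either $0$ or $1$ (since the average exponent is $1$ and they cannot all equal $1$ unless $n=1$ or the all-ones case, which I would treat as the genuine base case). If some $a_j=0$, I would apply the string equation to strip off that marked point, expressing the integral over $\Modbar_{1,n}$ as a sum of integrals over $\Modbar_{1,n-1}$; if instead the minimal positive exponent is $1$ with no zeros present, I would relabel and apply the dilaton equation. In either case the inductive hypothesis supplies the closed form for the lower integrals, and the remaining task is purely algebraic: to verify that the proposed right-hand side
\begin{displaymath}
F_n(a_1,\dots,a_n)=\frac{1}{24}\binom{n}{a_1,\dots,a_n}
\left(1-\sum_{i=2}^n\frac{\sig_i(a_1,\dots,a_n)}{i(i-1)\binom{n}{i}}\right)
\end{displaymath}
satisfies the same string/dilaton recursions. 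The string step amounts to checking the identity
\begin{displaymath}
F_{n+1}(a_1,\dots,a_n,0)=\sum_{j\,:\,a_j>0}F_n(a_1,\dots,a_j-1,\dots,a_n),
\end{displaymath}
which unwinds into an identity among elementary symmetric polynomials $\sig_i$; the multinomial prefactors and the binomial denominators $\binom{n}{i}$ are engineered precisely so these match.

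\textbf{The main obstacle} I anticipate is this symmetric-function bookkeeping rather than any geometric input. When I lower one exponent $a_j\mapsto a_j-1$ inside $\sig_i(a_1,\dots,a_n)$, the elementary symmetric functions of the shifted arguments differ from those of the original by terms involving $\sig_{i-1}$ of the remaining variables, and summing over all $j$ with $a_j>0$ produces an expression I must collapse back into $\sig_i(a_1,\dots,a_n)$ and $\sum_j a_j=n$. Controlling the interaction between the multinomial coefficient (which transforms simply, by $\binom{n}{a_1,\dots}=\tfrac{n}{a_j}\binom{n-1}{\dots,a_j-1,\dots}$) and the $\tfrac{1}{i(i-1)\binom{n}{i}}$ weights is where the real work lies, and I expect to need the identity $\sum_{j}a_j\,\partial_{a_j}\sig_i=i\,\sig_i$ (Euler's relation for the homogeneous-degree-$i$ polynomial $\sig_i$) to force the telescoping. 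Once the two recursions are verified and the base case $\int_{\Modbar_{1,1}}\psi_1=\tfrac1{24}$ (equivalently $F_1(1)=\tfrac1{24}$, where the empty sum over $i$ leaves just the factor $1$) is confirmed, the theorem follows by induction, since every tuple with $\sum a_j=n$ is reachable by string/dilaton moves from the one-pointed base case.
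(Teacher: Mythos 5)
Your proposal is correct and follows essentially the same route as the paper: induction via the string equation, checking that the closed-form expression satisfies the same recursion, anchored at the all-ones base case $a_1=\cdots=a_n=1$. The only cosmetic difference is that the paper evaluates that base case directly from the known value $\int_{\Modbar_{1,n}}\psi_1\cdots\psi_n=(n-1)!/24$ (via the telescoping identity $\sum_{i=2}^n\frac{1}{i(i-1)}=1-\frac{1}{n}$), whereas you reach it by dilaton moves from $\int_{\Modbar_{1,1}}\psi_1=\frac{1}{24}$.
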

\begin{proof}
Suppose that $a_1\geq ...\geq a_m>0$ and $a_{m+1}=...=a_n=0$.
Set $M=\{1,...,m\}$ and $\q=(a_1,...,a_m)\in \PP(n)$.
If $m=n$, then $a_1=...=a_n=1$. In this case, 
\begin{displaymath}
\begin{split}
\frac{1}{24}{n\choose {1,...,1}}
\left(1-\sum_{i=2}^n\frac{\sig_i(1,...,n)}{i(i-1){n\choose i}}\right)
&=\frac{n!}{24} \left(1-\sum_{i=2}^n\frac{{n\choose i}}{i(i-1){n\choose i}}\right)\\
&=\frac{(n-1)!}{24}=\int_{\Modbar_{1,n}}\prod_{j=1}^n \psi_j.
\end{split}
\end{displaymath}

We may thus assume that $n>m$. The String equation may be applied to the 
left-hand-side of (\ref{eq:psi-integrals}). Thus,
\begin{equation}\label{eq:psi-integral-string}
\int_{\Modbar_{1,n}}\prod_{j=1}^m \psi_j^{a_j}=\sum_{p=1}^m
\int_{\Modbar_{1,n}}\psi_p^{a_p-1}\prod_{j\in\{1,...,m\}- \{p\}}\psi_j^{a_j}.
\end{equation}

For $I=(i_1<...<i_k)\in {M\choose k}$ and
$$J=\{j_1<...<j_{l}\}\subset I,\ \ \ \ I-J=\{j_1^\circ<...<j_{k-l}^\circ\}$$ 
we define 
\begin{displaymath}\begin{split}
&\sig(I):=a_{i_1}+...+a_{i_k},\ \ \ \ \ |I|:=k\\
&{\sig(I) \choose \q(I)}:={\sig(I)\choose {a_{i_1},...,a_{i_k}}}\\
&{{\sig(I)-|J|}\choose {\q_J(I)}}:={{\sig(I)-|J|}\choose
 {a_{j_1}-1,...,a_{j_l}-1,a_{j_1^\circ},...,a_{j_{k-l}^\circ}}}.
\end{split}\end{displaymath}

 The right-hand-side of (\ref{eq:psi-integrals}) may be re-written using
 \begin{equation}
 \begin{split}
\sum_{i=2}^n\frac{\sig_i(a_1,...,a_n)}{i(i-1){n\choose i}}
&=\frac{1}{n!}\sum_{i=2}^m\left((i-2)!\sum_{I\in {M\choose i}}
{{n-i}\choose \q_I(M)}\right).
 \end{split}
 \end{equation}
 Thus, the right-hand-side satisfies the String equation as well, since 
 $${{b_1+...+b_m}\choose {b_1,...,b_m}}=\sum_{k=1}^m
 {{b_1+...+b_m-1}\choose {b_1,...,b_{k-1},b_k-1,b_{k+1},...,b_m}}.$$
 Together with the verification of (\ref{eq:psi-integrals}) for 
 $a_1=...=a_n=1$, this completes the proof by induction.
\end{proof}

Consider the graphs shown in Figure~\ref{fig:graphs} together with the 
illustrated weight functions, which determine stable weighted graphs 
$G_i$ $i=1,...,n$.  With the notation of the introduction 
$D_i=[G_i]$ for $i=1,...,n-1$, while  $D_n=2[G_n]$, since $|Aut(G_n)|=2$.

\begin{thm}\label{thm:divisor-relation}
For every element $\kappa\in\kappa^{n-1}(\Modbar_{1,n})$
\begin{equation}\label{eq:genus-one-relation}
\frac{1}{24}\int_{[G_n]}\kappa=\sum_{i=1}^{n-1}{{n-2}\choose i-1}\int_{[G_i]}\kappa.
\end{equation}
\end{thm}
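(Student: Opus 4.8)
The plan is to reduce \eqref{eq:genus-one-relation} to the generating classes and then to evaluate every term explicitly. Since the classes $\psi(\q)$ with $\q\in\PP(n-1)$ span $\kappa^{n-1}(\Modbar_{1,n})$ over $\Q$, and both sides of \eqref{eq:genus-one-relation} are $\Q$-linear in $\kappa$, it suffices to prove the identity for each $\kappa=\psi(\q)$, where $\q=(a_1\geq\dots\geq a_k>0)$ is a partition of $n-1$. Throughout I abbreviate $c_p:=a_p+1\geq 2$ and $C:=\sum_{p=1}^k c_p=(n-1)+k$.

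First I would evaluate the right-hand integrals using the splitting formula \eqref{eq:int-over-cycles}. The graph $G_n$ has a single genus-$0$ vertex carrying a self-edge, so $(\pi^{k}_{1,n})^*[G_n]$ has a unique component and the formula collapses to one top genus-$0$ integral over $\Modbar_{0,n+2+k}$; as this is a multinomial, $\int_{[G_n]}\psi(\q)=C!/\prod_{p} c_p!$. For $G_i$ with $1\leq i\leq n-1$ the genus-$1$ and genus-$0$ vertices split $\{1,\dots,k\}$ into $T\sqcup S$, with $S$ the indices sent to the genus-$0$ vertex. The dimension constraints in \eqref{eq:int-over-cycles} force $\sum_{p\in S}a_p=i-1$ and $\sum_{p\in T}a_p=n-i$, so that $S$ alone determines $i=1+\sum_{p\in S}a_p$. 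The genus-$0$ factor is again a multinomial, while the genus-$1$ factor over $\Modbar_{1,\,n-i+|T|}$ is supplied by Theorem~\ref{thm:symmetry-2}; since the legs and the node carry $\psi$-exponent $0$, they drop out of the elementary symmetric functions and only the exponents $\{c_p\}_{p\in T}$ survive there.

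Substituting these evaluations into \eqref{eq:genus-one-relation} and cancelling the common factor $1/\!\left(24\prod_p c_p!\right)$ reduces the theorem to a single combinatorial identity. Writing $s=\sum_{p\in S}c_p$, $t=C-s$, and using $i-1=s-|S|$, it reads
\[
C!=\sum_{S\subseteq\{1,\dots,k\}}\binom{n-2}{\,s-|S|\,}\,s!\,t!\left(1-\sum_{l\geq 2}\frac{\sig_l\big(\{c_p\}_{p\in T}\big)}{l(l-1)\binom{t}{l}}\right).
\]
I would prove this by induction on $k$, mirroring the String-equation argument behind Theorem~\ref{thm:symmetry-2}. For $k=1$ only $S=\varnothing$ contributes, the term $S=\{1\}$ being killed by $\binom{n-2}{n-1}=0$, and the identity is immediate. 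For $k=2$ the two one-element subsets contribute $\big(\binom{n-2}{a_1}+\binom{n-2}{a_2}\big)c_1!\,c_2!$, which by Pascal's rule equals $\binom{n-1}{a_1}c_1!\,c_2!=(n-1)!\,c_1c_2$, exactly cancelling the $\sig_2$-correction carried by the $S=\varnothing$ term; this is the prototype of the general cancellation.

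The main obstacle is this last identity: the $\sig_l$-corrections from Theorem~\ref{thm:symmetry-2} must be controlled uniformly in $k$ and shown to telescope against the binomial weights $\binom{n-2}{s-|S|}$ and the factorials $s!\,t!$. The $k=2$ computation indicates that the mechanism is Pascal's rule together with the vanishing of the boundary terms (those with $s-|S|<0$ or $s-|S|>n-2$), so I expect the general case to organize as a binomial convolution solved either by a String-equation recursion — peeling off one part of $\q$ and checking that both sides transform identically — or, equivalently, by an exponential-generating-function computation in which the factor $1/\prod_p c_p!$ and the multinomials become products. Verifying that this recursion is compatible with the subset sum over $S$, and in particular that the symmetric-function corrections reassemble correctly after each reduction, is where the real work lies.
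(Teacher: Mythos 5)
Your reduction is carried out correctly and matches the paper's setup exactly: restricting to the classes $\psi(\q)$, evaluating $\int_{[G_n]}$ as a single genus-zero multinomial, evaluating $\int_{[G_i]}$ via \eqref{eq:int-over-cycles} together with Theorem~\ref{thm:symmetry-2}, and your bookkeeping $i-1=s-|S|$ agrees with the paper's weight $\binom{\sig(N)-|N|-1}{\sig(I)-|I|-1}=\binom{n-2}{i-1}$. Indeed, your target identity is precisely the statement $F(\q)=0$ for the function $F$ the paper introduces, and your $k=1$ and $k=2$ verifications are correct. But there is a genuine gap: the general-$k$ identity, which you explicitly defer (``where the real work lies''), is not a routine convolution and constitutes essentially the entire content of the paper's proof. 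Pascal's rule telescopes at $k=2$ only because a single correction $\sig_2(c_1,c_2)$ appears; for $k\geq 3$ every subset $T$ contributes corrections $\sig_l\big(\{c_p\}_{p\in T}\big)$ for all $2\leq l\leq |T|$, and these do not cancel pairwise against binomial shifts, so the mechanism you extrapolate from $k=2$ does not extend as stated.

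The paper closes this gap by a two-step functional-equation argument which your sketch gestures at but does not execute. First, $F$ is extended beyond the geometric range (relaxing the condition that all parts exceed $1$), which is necessary for any recursion that lowers exponents to make sense. Then two relations are proved by direct manipulation of the closed formula: a String-type relation $\sum_{i=1}^k F(\q_i)=F(\q)$, where $\q_i$ lowers the $i$-th part by one (equation \eqref{eq:combinatorial-string}), and a Dilaton-type relation $F\big(\ov{\q}\big)=(\sig(N)+1)F(\q)$ for appending a part equal to $1$. Note that your proposed induction on $k$ cannot run on the String recursion alone, since that recursion preserves the length $k$ and only lowers exponents; the length is reduced only through the Dilaton-type relation, and verifying it --- the long computation in the paper, organized by the five families of terms indexed by how $k+1$ sits inside $J\subset I\subset N$ --- is where the symmetric-function corrections must be shown to reassemble, exactly the point you flag as open. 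Together the two relations collapse everything to the trivial case $k=1$, $a_1=1$. So your proposal identifies the right target identity and even the right kind of recursion, but the decisive verification is missing, and without it the proof is incomplete.
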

\begin{proof}
It suffices to prove the theorem for all $\psi$ classes. Let $\tilde\q=(b_1,...,b_k)\in\PP(n-1)$
be a partition of $n-1$ of length $k$ and set $\q=(a_1,...,a_k)$ where 
$a_i=b_i+1$ for $i=1,...,k$. Set 
$$F(\q)=F(a_1,...,a_k)=\int_{[G_n]}\psi(\tilde{\q})-24\sum_{i=1}^{n-1}
{{n-2}\choose i-1}\int_{[G_i]}\psi(\tilde{\q}).$$
With $N=\{1,...,k\}$, $I^\circ=N-I$ for every $I\subset N$, 
and following the notation set in the proof of Theorem~\ref{thm:symmetry-2} 
\begin{displaymath}
\begin{split}
F(\q)&={{\sigma(N)}\choose \q(N)}-\sum_{I\subset N}{{\sigma(N)-|N|-1}\choose 
{\sigma(I)-|I|-1}}{{\sigma(I)}\choose{\q(I)}}{{\sigma(I^\circ)}\choose {\q(I^\circ)}}\\
&\ \ \ \ \ \ \ \ \ \ \ \ \ +\sum_{J\subset I\subset N}(|J|-2)!{{\sigma(N)-|N|-1}\choose 
{\sigma(I)-|I|-1}}{{\sigma_J(I)}\choose{\q_J(I)}}{{\sigma(I^\circ)}\choose {\q(I^\circ)}}.
\end{split}
\end{displaymath}
The above equation may be used to define the function $F$ for every 
partition $\q$ (relaxing the condition $a_i>1$ for $i=1,...,k$).
Set $$\q_i=(a_1,...,a_{i-1},a_i-1,a_{i+1},...,a_k).$$
Assuming $a_i>1$ for $i=1,...,k$, and setting 
$\lambda(I)={\sig(I)-|I|-1}$ 
\begin{displaymath}
\begin{split}
F(\q_i)=&\frac{a_i}{\sig(N)}{{\sig_i(N)}\choose{\q_i(N)}}-\sum_{i\in I\subset N}
\frac{a_i\lambda(I)}{\sig(I)\lambda(N)}{{\lambda(N)}\choose 
{\lambda(I)}}{{\sigma(I)}\choose{\q(I)}}{{\sigma(I^\circ)}\choose {\q(I^\circ)}}\\
& -\sum_{i\in I^\circ\subset N}
\frac{a_i(\lambda(I^\circ)+1)}{\sig(I^\circ)\lambda(N)}{{\lambda(N)}\choose 
{\lambda(I)}}{{\sigma(I)}\choose{\q(I)}}{{\sigma(I^\circ)}\choose {\q(I^\circ)}}\\
&+\sum_{\substack{J\subset I\subset N\\ i\in J}}
\frac{(a_i-1)\lambda(I)}{\sig_J(I)\lambda(N)}(|J|-2)!{{\lambda(N)}\choose 
{\lambda(I)}}{{\sigma_J(I)}\choose{\q_J(I)}}{{\sigma(I^\circ)}\choose {\q(I^\circ)}}\\
& +\sum_{\substack{J\subset I\subset N\\ i\in I-J}}
\frac{a_i\lambda(I)}{\sig_J(I)\lambda(N)}(|J|-2)!{{\lambda(N)}\choose 
{\lambda(I)}}{{\sigma_J(I)}\choose{\q_J(I)}}{{\sigma(I^\circ)}\choose {\q(I^\circ)}}\\
&+\sum_{\substack{J\subset I\subset N\\ i\in I^\circ}}
\frac{a_i(\lambda(I^\circ)+1)}{\sig(I^\circ)\lambda(N)}(|J|-2)!{{\lambda(N)}\choose 
{\lambda(I)}}{{\sigma_J(I)}\choose{\q_J(I)}}{{\sigma(I^\circ)}\choose {\q(I^\circ)}}.
\end{split}
\end{displaymath} 
Summing over $i=1,...,k$ we obtain
\begin{displaymath}
\begin{split}
\sum_{i=1}^kF(\q_i)=&{{\sig(N)}\choose{\q(N)}}-\sum_{I\subset N}
\frac{\lambda(I)+(\lambda(I^\circ)+1)}{\lambda(N)}{{\lambda(N)}\choose 
{\lambda(I)}}{{\sigma(I)}\choose{\q(I)}}{{\sigma(I^\circ)}\choose {\q(I^\circ)}}\\
&+\sum_{J\subset I\subset N}
\frac{\lambda(I)+(\lambda(I^\circ)+1)}{\lambda(N)}(|J|-2)!
{{\lambda(N)}\choose {\lambda(I)}}
{{\sigma_J(I)}\choose{\q_J(I)}}{{\sigma(I^\circ)}\choose {\q(I^\circ)}}.
\end{split}
\end{displaymath}
Thus
\begin{equation}\label{eq:combinatorial-string}
\sum_{i=1}^kF(\q_i)=F(\q).
\end{equation}
If  $\ov{\q}=(a_1,...,a_{k},1)$ is obtained by adding a $1$ to $\q$,
\begin{displaymath}
\begin{split}
F(\ov{\q})=&(\sig(N)+1){{\sig(N)}\choose{\q(N)}}
-\sum_{\substack{I\subset N\\ \ov{I}=I\cup\{k+1\}}}(\sig(I)+1)
{{\lambda(N)}\choose{\lambda(I)}}{{\sig(I)}\choose{\q(I)}}{{\sig(I^\circ)}\choose{\q(I^\circ)}}\\
&-\sum_{\substack{I\subset N\\ \ov{I}=I}}
(\sig(I^\circ)+1)
{{\lambda(N)}\choose{\lambda(I)}}{{\sig(I)}\choose{\q(I)}}{{\sig(I^\circ)}\choose{\q(I^\circ)}}\\
&+\sum_{\substack{J\subset I\subset N\\ \ov{J}=J\cup\{k+1\}, \ov{I}=I\cup\{k+1\}}}
\left((|J|-1)!{{\lambda(N)}\choose 
{\lambda(I)}}{{\sigma_J(I)}\choose{\q_J(I)}}{{\sigma(I^\circ)}\choose {\q(I^\circ)}}\right)\\
&+\sum_{\substack{J\subset I\subset N\\ \ov{J}=J, \ov{I}=I\cup\{k+1\}}}
(\sig_J(I)+1)\left((|J|-2)!{{\lambda(N)}\choose 
{\lambda(I)}}{{\sigma_J(I)}\choose{\q_J(I)}}{{\sigma(I^\circ)}\choose {\q(I^\circ)}}\right)\\
&+\sum_{\substack{J\subset I\subset N\\ \ov{J}=J, \ov{I}=I}}
(\sig(I^\circ)+1)\left((|J|-2)!{{\lambda(N)}\choose 
{\lambda(I)}}{{\sigma_J(I)}\choose{\q_J(I)}}{{\sigma(I^\circ)}\choose {\q(I^\circ)}}\right)\\
&+\sum_{\substack{J=\{i\}\subset I\subset N\\ \ov{J}=J\cup\{i,k+1\}, \ov{I}=I\cup\{k+1\}}}
\frac{a_i}{\sig(I)}{{\lambda(N)}\choose 
{\lambda(I)}}{{\sigma(I)}\choose{\q(I)}}{{\sigma(I^\circ)}\choose {\q(I^\circ)}}\\
=&(\sig(N)+1)F(\q)-\sum_{\substack{I\subset N}}
{{\lambda(N)}\choose{\lambda(I)}}{{\sig(I)}\choose{\q(I)}}{{\sig(I^\circ)}\choose{\q(I^\circ)}}\\
&+\sum_{\substack{J=\{i\}\subset I\subset N}}
\frac{a_i}{\sig(I)}{{\lambda(N)}\choose 
{\lambda(I)}}{{\sigma(I)}\choose{\q(I)}}{{\sigma(I^\circ)}\choose {\q(I^\circ)}}\\
&\Rightarrow\ \ \ \ \ \ \ \ F(\ov\q)=(\sig(N)+1)F(\q).
\end{split}
\end{displaymath}
The above computation, together with (\ref{eq:combinatorial-string}),
reduce the proof to the case where $k=1$ and $a_1=1$, which is straight forward.
\end{proof}
\begin{figure}
\def\svgwidth{13cm}\begin{center}
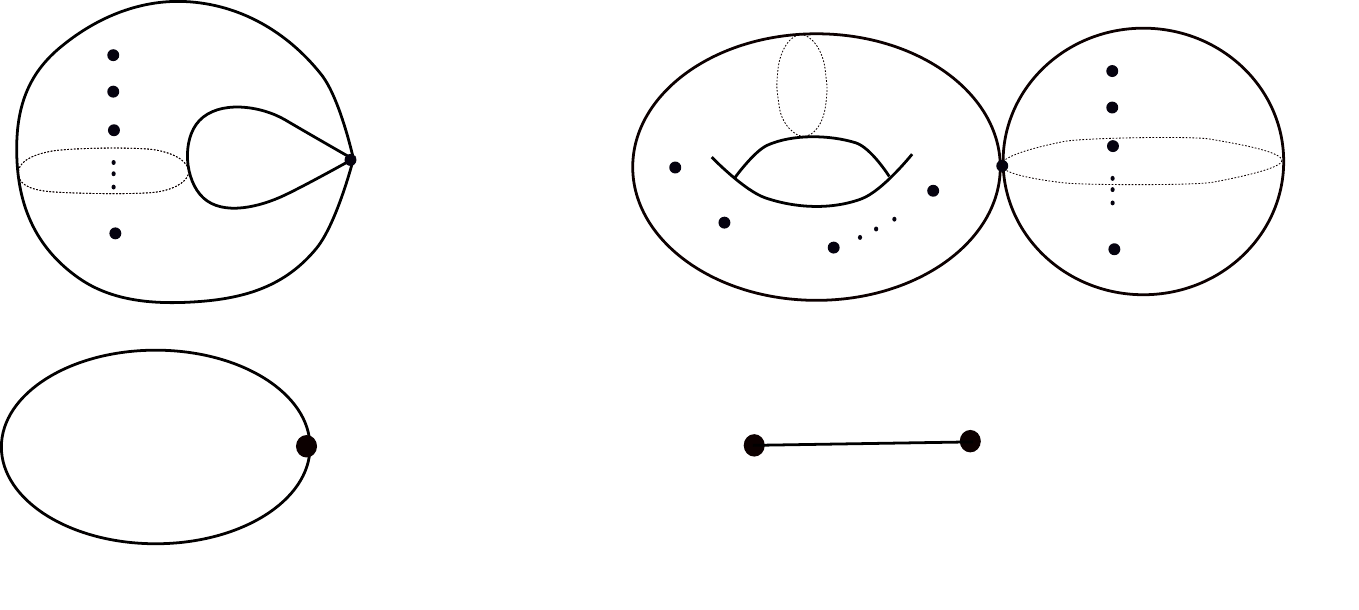
 \caption{\label{fig:graphs}\
{The \SCC $G_i$ for $i=1,...,n$ is illustrated. Each \SCC
corresponds to a divisor in $\Modbar_{1,n}$}}
\end{center}
\end{figure}
\begin{remark}
The above proof uses the combinatorial formulas for the terms appearing in $F(\q)$ 
to show that $F(\q)$ (in a sense) satisfies the String and the Dilation 
equations. One can present a purely geometric proof for these two equations, and 
obtain a proof of Theorem~\ref{thm:divisor-relation} which does not use 
Theorem~\ref{thm:symmetry-2}. 
\end{remark}
\begin{thm}\label{thm:genus-one-codim-one}
The rank of $\kappa_c^{n-1}(\Modbar_{1,n})$ is equal to $n-1$.
\end{thm}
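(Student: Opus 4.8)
The plan is to compute the rank of $\kappa_c^{n-1}(\Modbar_{1,n})$ by identifying it, via Proposition~\ref{prop:main-1}, with the rank of the matrix $R(n-1;1,n)$, and then to show that this rank equals $n-1$ by combining two facts: an upper bound coming from the relation of Theorem~\ref{thm:divisor-relation}, and a matching lower bound coming from Theorem~\ref{thm:main-2}. First I would determine the relevant combinatorial data. In codimension one we have $d=3g-4+n=n-1$ and $e=3g-3+n-d=1$, so every contributing \SCC $G$ has exactly one edge. For $g=1$ this forces $G$ to be one of the divisors $D_i=[G_i]$ described in the introduction and depicted in Figure~\ref{fig:graphs}: either a genus-$1$ and a genus-$0$ component joined at a node, or a single genus-$0$ component with a self-node. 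Thus $\QQ(n-1;1,n)$ is indexed exactly by the graphs $G_1,\dots,G_n$, giving $n$ rows, while by Proposition~\ref{prop:rank-of-combinatorial-kappa-ring} the columns are indexed by $\PP(n-1,3g-2+n-d)=\PP(n-1,n-1)=\PP(n-1)$.

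The next step is the upper bound. Theorem~\ref{thm:divisor-relation} exhibits an explicit linear relation
\begin{displaymath}
\frac{1}{24}\int_{[G_n]}\kappa=\sum_{i=1}^{n-1}{{n-2}\choose i-1}\int_{[G_i]}\kappa
\end{displaymath}
valid for \emph{every} $\kappa\in\kappa^{n-1}(\Modbar_{1,n})$. This says precisely that the row of $R(n-1;1,n)$ corresponding to $G_n$ is a fixed rational combination of the rows corresponding to $G_1,\dots,G_{n-1}$. Hence the $n$ rows span a space of dimension at most $n-1$, so $r(n-1;1,n)\le n-1$, and therefore $\dim\kappa_c^{n-1}(\Modbar_{1,n})\le n-1$.

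For the lower bound I would invoke Theorem~\ref{thm:main-2}, which gives $\dim\kappa_c^{d}(\Mgnbar)\ge|\PP(d,3g-2+n-d)|$. Specializing to $g=1$, $d=n-1$ yields $|\PP(n-1,n-1)|=|\PP(n-1)|$, which is far too large; so the crude partition bound does not suffice here, and I would instead argue directly that the $(n-1)$ rows indexed by $G_1,\dots,G_{n-1}$ are linearly independent. The cleanest route is to exhibit $n-1$ $\psi$-classes $\psi(\tilde\q)$ against which the pairing matrix $\big(\int_{[G_i]}\psi(\tilde\q)\big)$ is nonsingular; using the integration formula~(\ref{eq:int-over-cycles}) together with the closed form of Theorem~\ref{thm:symmetry-2}, each integral $\int_{[G_i]}\psi(\tilde\q)$ decomposes as a sum over distributions of the marked points between the genus-$1$ and genus-$0$ components, and one can choose the test classes (for instance the $\psi$-classes supported on partitions of a controlled length) so that the resulting matrix is triangular or otherwise visibly full rank. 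The main obstacle is precisely this independence argument: one must show that, \emph{after} removing the single relation of Theorem~\ref{thm:divisor-relation}, no further relations survive, which requires extracting enough genuinely distinct linear functionals from the $\psi$-integral formula rather than merely counting partitions. Once both bounds are in place, $r(n-1;1,n)=n-1$, and Proposition~\ref{prop:main-1} gives the theorem.
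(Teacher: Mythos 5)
Your upper bound is exactly the paper's: for $g=1$, $d=n-1$, $e=1$, the rows of $R(n-1;1,n)$ are the $n$ weight multi-sets realized by $G_1,\dots,G_n$, and Theorem~\ref{thm:divisor-relation} expresses the $G_n$-row as a rational combination of the others, so $r(n-1;1,n)\le n-1$. But two of your bookkeeping claims are wrong. With $g=1$ and $d=n-1$ one has $3g-2+n-d=2$, not $n-1$: Proposition~\ref{prop:rank-of-combinatorial-kappa-ring} gives $\PP(n-1;1,n)=\PP(n-1,2)$ (the columns of $R(n-1;1,n)$ are indexed by all of $\PP(n-1)$ simply by the definition of the matrix, not via that proposition), and the bound of Theorem~\ref{thm:main-2} specializes to $|\PP(n-1,2)|=\left\lfloor\frac{n-1}{2}\right\rfloor+1$, which is too \emph{small}, not ``far too large''. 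Had the general bound really produced $|\PP(n-1)|$, it would contradict your own upper bound $n-1$, which should have flagged the miscalculation.

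The genuine gap is the lower bound. You write that one ``can choose the test classes \dots so that the resulting matrix is triangular or otherwise visibly full rank,'' but you never choose them, and you never verify nonsingularity of any block; that choice and verification is the entire mathematical content of the paper's proof. The paper implements it through Lemma~\ref{lem:rank-general}: take $P=\PP(n-1,2)$ together with the partitions $(a_1\le a_2\le a_3)$ with $a_1+a_2\le a_3$, use the fine assignment $f(a_1,a_2,a_3)=(a_1+a_2,a_3)$, and for each block $\q=(a_1\le a_2)$ with $a_1>1$ pair the two rows $\QQ(\q;1,n)=\left\{\left((1,a_1),(0,a_2+3)\right),\left((0,a_1+3),(1,a_2)\right)\right\}$ against the two columns $\q$ and $\tilde\q=(1,a_1-1,a_2)$. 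Theorem~\ref{thm:symmetry-2} then evaluates the diagonal block explicitly as
\begin{displaymath}
M(\q)=\left(\begin{array}{cc}
1 & {{a_1+2}\choose 2}-a_1\\
1 & {{a_1+2}\choose 2}
\end{array}\right),
\end{displaymath}
which is nonsingular because the second-column entries differ by $a_1\neq 0$; summing the block ranks (with rank at least $1$ from the short blocks $(n-1)$, $(1,n-2)$, and $((n-1)/2,(n-1)/2)$ for odd $n$) produces the lower bound matching $n-1$. Nothing in your proposal substitutes for these evaluations: nonsingularity of the blocks is a genuine computation in which Theorem~\ref{thm:symmetry-2} is essential, not a formal consequence of the shape of~(\ref{eq:int-over-cycles}) — already for $n=4$ the needed $2\times 2$ determinant (rows $\QQ((1,2);1,4)$ against suitable columns) must be checked by hand. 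As written, your argument establishes only $r(n-1;1,n)\le n-1$ and correctly locates, but does not close, the independence half of the theorem.
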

\begin{proof}
Since the total number of rows in $R(n-1;1,n)$ is $n$, Theorem~\ref{thm:divisor-relation}
implies that $r(n-1;1,n)$ is at most $n-1$. In order to show the equality, let $P$ denote 
the union of $\PP(n-1,2)$ with the set of partitions $(a_1\leq a_2\leq a_3)$ of $n-1$
such that $a_1+a_2\leq a_3$. Define $f:P\ra \PP(n-1,2)$ by setting 
$f(a_1,a_2,a_3)=(a_1+a_2,a_3)$ and extending by the identity over $\PP(n-1,2)$. Thus,
$f$ defines a fine assignment with image in $\PP(n-1,2)$. For every 
$\q=(a_1\leq a_2)\in \PP(n-1;2)$ with $a_1>1$, 
$$\QQ(\q;1,n)=\left\{\left((1,a_1),(0,a_2+3)\right),\left((0,a_1+3),(1,a_2)\right)\right\}.$$
Both $\q$ and $\tilde{\q}=(1,a_1-1,a_2)$ correspond to the matrix $R_f(\q;1,n)$.
Thus, $r_f(\q;1,n)$ is greater than or equal to the rank of the $2\times 2$ matrix 
determined by $\QQ(\q;1,n)$, $\q$ and $\tilde{\q}$. Theorem~\ref{thm:symmetry-2}
may be used to compute the aforementioned matrix as 
\begin{displaymath}
M(\q)=\left(\begin{array}{cc}
1&24\int_{\Modbar_{1,a_1+2}}\psi_1^2\psi_2^{a_1}\\
1&{{a_1+2}\choose 2}
\end{array}\right)
=\left(\begin{array}{cc}
1&{{a_1+2}\choose 2}-a_1\\
1&{{a_1+2}\choose 2}
\end{array}\right).
\end{displaymath}
Thus, $r_f(\q;1,n)=2$ for every such $\q$. For $\q\in\{(n-1), (1,n-2)\}$ and for 
$\q=((n-1)/2,(n-1)/2)$ we have $r_f(\q;1,n)\geq 1$. These observations imply
that the rank of $R(n-1;1,n)$ is at least $n-1$.
\end{proof}

\subsection{The $\psi$ classes of length two}
In adapting a similar argument when $g>1$ we encounter matrices which 
play the role of the matrices $M(\q)$ encountered in the proof of 
Theorem~\ref{thm:genus-one-codim-one}. Dealing with these interesting matrices requires 
more work, as we will see in this subsection.\\

Fix $g>1$ and $m>3g-1$, and consider the matrix
\begin{displaymath}
\begin{split}
&M(m;g)=\left(n_j(h,m)\right)_{\substack{h=0,1,...,g\\ j=0,2,3,...,g+1}},\ \ \ 
\text{where}\\
&n_j(h,m):=24^h\times h!\times \int_{\Modbar_{h,m-3h+3}}\psi_1^{m-j}\psi_2^j
\ \ j=0,1,...,m.
\end{split}
\end{displaymath}
We begin our investigation with the study of the entries of this matrix. Let 
 $P_j(h)=n_j(h,3h-1)$.
\begin{lem}
$P_j$ is a polynomial of degree $j$, with the 
leading coefficient  $$\frac{6^j}{(2j+1)!!}.$$
\end{lem}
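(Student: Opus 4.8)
The plan is to translate the statement into one about two–point intersection numbers and then run the Witten--Kontsevich (KdV/DVV) recursion, isolating a single contribution that carries the top degree while all others are shown to be of strictly lower degree. First I would rewrite the quantity. Since $m-3h+3=2$ when $m=3h-1$, we have
$$P_j(h)=24^h\,h!\int_{\Modbar_{h,2}}\psi_1^{\,3h-1-j}\psi_2^{\,j}=24^h\,h!\,\langle\tau_{3h-1-j}\tau_j\rangle_h,$$
where $\langle\tau_{a_1}\cdots\tau_{a_k}\rangle_g:=\int_{\Modbar_{g,k}}\psi_1^{a_1}\cdots\psi_k^{a_k}$. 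The normalization factor $24^h h!$ is the natural one: the one–point value $\langle\tau_{3h-2}\rangle_h=1/(24^h h!)$ gives $24^h h!\,\langle\tau_{3h-2}\rangle_h=1$, which will serve as the base of the induction.

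Next I would apply the DVV recursion with distinguished index $d_0=j$ to $\langle\tau_j\tau_{3h-1-j}\rangle_h$. It produces a genus-preserving part, a genus-lowered part, and boundary–splitting products. The \emph{single} genus-preserving term comes from pairing $\tau_j$ with the only other insertion $\tau_{3h-1-j}$, yielding $\tfrac{(6h-3)!!}{(2j+1)!!\,(6h-2j-3)!!}\,\langle\tau_{3h-2}\rangle_h$. After multiplying by $24^h h!$ the one–point value cancels and this becomes
$$\frac{1}{(2j+1)!!}\prod_{l=0}^{j-1}\bigl(6h-(2l+3)\bigr),$$
a polynomial in $h$ of degree exactly $j$ whose leading coefficient is $6^j/(2j+1)!!$. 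This already reproduces the claimed degree and leading coefficient, so the content of the proof is entirely in bounding the remaining terms.

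To control those I would prove a more general statement by induction: for fixed bounded indices $c_1,\dots,c_r$ and $D=3h-2+r-\sum_i c_i$, the normalized correlator
$$\Phi_{c_1,\dots,c_r}(h):=24^h\,h!\,\langle\tau_{c_1}\cdots\tau_{c_r}\tau_D\rangle_h$$
is a polynomial in $h$ of degree $\sum_i c_i$ with leading coefficient $\prod_i 6^{c_i}/(2c_i+1)!!$; the lemma is the case $r=1$, $c_1=j$. The induction runs on $r+\sum_i c_i$, using the string equation to delete an index $c_i=0$, the dilaton equation to delete an index $c_i=1$ (which contributes the factor $2h-2+r$ of degree $1$ and leading coefficient $2=6/3!!$), and DVV with $d_0=c_i\ge 2$ otherwise. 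In every step the unique leading contribution pairs the distinguished index with the big index $\tau_D$, dropping $r$ by one and leaving the other small indices intact; the accompanying double-factorial coefficient behaves like a degree-$c_i$ polynomial in $h$ with leading coefficient $6^{c_i}/(2c_i+1)!!$, which multiplies the inductively known leading coefficient of $\Phi_{c_2,\dots,c_r}$. Every other term either strictly lowers the weight $\sum_i c_i$ with a bounded coefficient, or, in the splitting case, factors as a bounded genus-$g'$ correlator times a large-genus $\Phi$ of smaller weight; a dimension count (each bounded factor forces its own genus to be bounded) shows the product has degree strictly below $\sum_i c_i$. The normalization identity $24^h h!=\binom{h}{g'}\,24^{g'}g'!\,24^{g''}g''!$ only contributes a factor $\binom{h}{g'}$ of bounded degree, which does not disturb the estimate.

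The main obstacle is not the leading term, which is the one-line computation above, but organizing the induction so that all genus-lowered and boundary terms are simultaneously seen to be polynomials of strictly smaller degree; this is precisely why one must pass to the general $r$-point statement, since the two-point recursion already generates three-point numbers that cannot be reduced by string or dilaton alone. The delicate points are the dimension count that pins down which genera can actually occur in the splitting terms and the matching of the $24^{(\cdot)}(\cdot)!$ normalizations across the two factors.
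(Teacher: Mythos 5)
Your proof is correct, but it takes a genuinely different route from the paper's. The paper also works on the KdV side of Witten--Kontsevich, but in a form (recursion (\ref{eq:KdV}), obtained from the two-point KdV equation together with the string equation) that has the special feature of \emph{closing} on the quantities $P_j$ themselves: the quadratic terms of the hierarchy collapse into the explicit binomials $\binom{h}{j/3}+2\binom{h}{(j-1)/3}$, so $P_j(h)$ is expressed through $P_{j-1}(h)$, $P_{j-2}(h)$, those binomials, and $\frac{6h}{2j+1}\sum_{k=0}^{4}\binom{4}{k}P_{j-1-k}(h-1)$, and a few-line induction on $j$ gives both claims, the leading coefficient arising recursively from the single top term $\frac{6h}{2j+1}P_{j-1}(h-1)$ via $\frac{6}{2j+1}\cdot\frac{6^{j-1}}{(2j-1)!!}=\frac{6^{j}}{(2j+1)!!}$; no correlator with more than two insertions ever appears. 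You instead run the raw DVV recursion, which identifies the leading term in closed form at once (the pairing of $\tau_j$ with the large index, giving $\prod_{l=0}^{j-1}(6h-2l-3)/(2j+1)!!$), but whose genus-lowered and splitting terms leave the two-point family -- hence your auxiliary $r$-point lemma on $\Phi_{c_1,\dots,c_r}(h)$, whose induction is sound: string preserves weight and leading coefficient, dilaton contributes $2h-2+r$, the DVV pairing with $\tau_D$ contributes a degree-$c_i$ factor with leading coefficient $6^{c_i}/(2c_i+1)!!$, and the dimension count does bound $g'$ in the splittings so that $\binom{h}{g'}$ times the remaining factor has degree at most $\sum_i c_i-2$. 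One phrase needs repair: the genus-lowered term does \emph{not} carry a bounded coefficient, since the normalization $24^{h}h!=24h\cdot 24^{h-1}(h-1)!$ contributes a factor of degree one in $h$; because the weight simultaneously drops by two, its total degree is still at most $\sum_i c_i-1$, so your conclusion stands. In exchange for the extra bookkeeping, your route proves a stronger, reusable polynomiality statement (parallel in spirit to Lemma~\ref{lem:String} on $n_j(h,m)$) with a transparent closed-form leading term, whereas the paper's buys brevity by staying inside the two-variable family $P_j(h)$, at the price of the leading coefficient emerging only through the recursion.
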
 
\begin{proof}
The claim is trivial for $j=0$.
Note that by definition $$P_j(h)=\langle \tau_j\tau_{3h-1-j}\rangle.$$
Applying the KdV equation in the case $j\geq 1$ we get:
\begin{equation}\label{eq:KdV}
\begin{split}
P_j(h)=&\left(\frac{1}{2j+1}-2\right)P_{j-1}(h)+\left(\frac{1}{2j+1}-1\right)P_{j-2}(h)\\
&\ \ +\frac{1}{2j+1}\left({h \choose \frac{j}{3}}+2{h \choose \frac{j-1}{3}}\right)\\
&\ \ +\frac{6h}{2j+1} \left(\sum_{k=0}^4 {4\choose k}P_{j-1-k}(h-1)\right).
\end{split}
\end{equation}  
Both claims are then quick (inductive) implications of (\ref{eq:KdV}). 
\end{proof}
Let us assume that $P_j(h)=\sum_{k=0}^jA^j_{k}h^{j-k}$.
\begin{lem}\label{lem:String}
With the above notation fixed, $n_j(h,m)$ is a polynomial in the  variables $h$ and $m$
of degree $j$. If
$$n_j(h,m)=\sum_{\substack{p+q\leq i\\ p,q\geq 0}}A_j(p,q)h^pm^q,$$
then $A_j(p,q)\neq 0$ for $p+q=j$.
\end{lem}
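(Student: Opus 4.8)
The plan is to turn $n_j(h,m)$ into a recursion in $m$ by means of the String equation and to solve that recursion using the two-pointed integrals $P_\ell(h)=n_\ell(h,3h-1)$ as the base case. Writing the integral in intersection-number notation,
$$n_j(h,m)=24^h\,h!\,\big\langle \tau_{m-j}\,\tau_j\,\tau_0^{\,m-3h+1}\big\rangle_h,$$
so that $\Modbar_{h,m-3h+3}$ carries exactly $m-3h+1$ marked points of weight $0$. Applying the String equation $\langle\tau_0\prod_i\tau_{a_i}\rangle_h=\sum_i\langle\tau_{a_i-1}\prod_{k\neq i}\tau_{a_k}\rangle_h$ to one such weight-zero point (all the others contribute $\tau_{-1}=0$) and multiplying by $24^h h!$ yields the Pascal-type recursion
$$n_j(h,m)=n_j(h,m-1)+n_{j-1}(h,m-1),\qquad m\ge 3h,\ h\ge 1.$$

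Next I would iterate this recursion down to $m=3h-1$, where $n_\ell(h,3h-1)=P_\ell(h)$. Pascal's rule gives the closed form
$$n_j(h,m)=\sum_{\ell=0}^{j}\binom{m-3h+1}{\,j-\ell\,}\,P_\ell(h),$$
valid for all integers $m\ge 3h-1$, which exhibits $n_j(h,m)$ as a polynomial. By the preceding lemma $P_\ell$ is a polynomial in $h$ of degree $\ell$ with leading coefficient $6^\ell/(2\ell+1)!!$, while $\binom{m-3h+1}{j-\ell}$ is a polynomial in $(h,m)$ of total degree $j-\ell$; hence each summand has total degree at most $j$ and $n_j$ has total degree at most $j$. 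Reading off the degree-$j$ homogeneous part, and using that the leading part of $\binom{m-3h+1}{j-\ell}$ is $(m-3h)^{j-\ell}/(j-\ell)!$, this part equals $\sum_{\ell}\frac{6^\ell}{(2\ell+1)!!(j-\ell)!}\,h^\ell(m-3h)^{j-\ell}$. Expanding $(m-3h)^{j-\ell}$ and collecting the monomial $h^p m^{j-p}$ (via the identity $\frac{1}{(j-\ell)!}\binom{j-\ell}{p-\ell}=\frac{1}{(p-\ell)!(j-p)!}$) gives
$$A_j(p,j-p)=\frac{(-3)^p}{(j-p)!}\sum_{\ell=0}^{p}\frac{(-2)^\ell}{(2\ell+1)!!\,(p-\ell)!}.$$

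The whole statement therefore reduces to the nonvanishing of the scalar $S_p:=\sum_{\ell=0}^p\frac{(-2)^\ell}{(2\ell+1)!!\,(p-\ell)!}$, and this is the one genuinely non-routine step. I expect to settle it through an integral representation: substituting $\frac{1}{(2\ell+1)!!}=\frac{1}{2^\ell\,\ell!}\int_0^{\pi/2}\sin^{2\ell+1}\theta\,d\theta$ and applying the binomial theorem collapses the sum to
$$S_p=\frac{1}{p!}\int_0^{\pi/2}\sin\theta\,(1-\sin^2\theta)^p\,d\theta=\frac{1}{p!}\int_0^{\pi/2}\sin\theta\,\cos^{2p}\theta\,d\theta=\frac{1}{p!\,(2p+1)}.$$
In particular $S_p>0$, so $A_j(p,j-p)=\frac{(-3)^p}{p!\,(j-p)!\,(2p+1)}\neq 0$ for every $0\le p\le j$; since the degree-$j$ homogeneous part is nonzero, the total degree is exactly $j$. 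Finally, the remaining row $h=0$ is covered directly by the genus-zero evaluation $n_j(0,m)=\binom{m}{j}$, a degree-$j$ polynomial in $m$ consistent with the formula above, which completes the argument.
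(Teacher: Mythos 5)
Your proof is correct, and it shares the paper's skeleton up to the decisive step: both arguments use the String equation to produce the Pascal-type recursion $n_j(h,m)=n_j(h,m-1)+n_{j-1}(h,m-1)$ for $m\geq 3h$ and anchor it at the base case $P_j(h)=n_j(h,3h-1)$ from the preceding lemma, which yields polynomiality and the degree bound. Where you genuinely diverge is in proving non-vanishing of the top coefficients. The paper iterates the recursion one degree at a time, extracts a recursion for the leading coefficients $a_j(i)=A_j(j-i,i)$ of the form $a_j(i)=a_{j-i}/i!$ with $a_j=6^j/(2j+1)!!-\sum_{i=1}^j(3^i/i!)\,a_{j-i}$, and certifies $a_j\neq 0$ indirectly via a $2$-adic valuation argument, never identifying the numbers themselves. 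You instead solve the recursion in closed form, $n_j(h,m)=\sum_{\ell=0}^j\binom{m-3h+1}{j-\ell}P_\ell(h)$ (which I checked: it reproduces the base case at $m=3h-1$ and satisfies the recursion by Pascal's rule), and then evaluate the entire degree-$j$ homogeneous part exactly; your Wallis-type integral computation $S_p=1/(p!\,(2p+1))$ is correct, giving $A_j(p,q)=(-3)^p/\bigl(p!\,q!\,(2p+1)\bigr)$ for $p+q=j$. This is strictly more information than the paper obtains, and it is consistent with the paper's approach: substituting $a_j=(-3)^j/(j!\,(2j+1))$ into the leading-coefficient recursion reduces it to $\sum_{s=0}^j\binom{j}{s}(-1)^s/(2s+1)=\int_0^1(1-x^2)^j\,dx=2^j j!/(2j+1)!!$, i.e.\ the same Wallis identity in disguise. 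In fact your closed form exposes two apparent typos in the paper's printed proof: the denominator in $a_j(i)=a_{j-i}/\prod_{k=1}^{i}k!$ should be $i!$ (the two agree only for $i\leq 2$), and correspondingly $\mathrm{ord}_2(a_j)=-\mathrm{ord}_2(j!)$ rather than $-\sum_{i=1}^{j}\mathrm{ord}_2(i!)$ (e.g.\ $a_3=-9/14$); the lemma's conclusion is unaffected either way. The one point deserving a sentence more care in your write-up is the row $h=0$, where $P_\ell(0)$ is a formal polynomial value rather than a geometric integral; your direct appeal to $n_j(0,m)=\binom{m}{j}$ handles it, at the same level of rigor as the paper.
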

\begin{proof}
For $j=0$, $n_{0}(h,m)=1$ and the claim is trivial. 
Suppose now that 
$$n_i(h,m)=\sum_{p+q\leq i}A_i(p,q)h^pm^q,$$
for $0\leq i<j$. Suppose that $m>3h-1$ is an arbitrary integer and $j>0$. 
From the String equation
\begin{displaymath}
\int_{\Modbar_{h,m-3h+3}}\psi_1^{m-j}\psi_2^j=
\int_{\Modbar_{h,m-3h+2}}\psi_1^{m-j-1}\psi_2^j
+\int_{\Modbar_{h,m-3h+2}}\psi_1^{m-j}\psi_2^{j-1}
\end{displaymath}
we obtain
\begin{equation}\label{eq:String}
\begin{split}
n_{j}(h,m)&=n_{j}(h,m-1)+n_{j-1}(h,m-1)\\
&=...=n_{j}(h,3h-1)+\sum_{k=3h-1}^{m-1}n_{j-1}(h,k)\\
&=P_j(h)+\sum_{p+q<j}A_{j-1}(p,q)h^p\left(\sum_{k=3h-1}^{m-1}k^q\right)
\end{split}
\end{equation}
Equation~(\ref{eq:String})  determines $n_j(h,m)$ as a polynomial 
of degree $j$. The degree $j$ part of $n_j$ may be computed from the degree 
$j-1$ part of $n_{j-1}$. More precisely, let $a_j(i):=A_j(j-i,i)$ and set 
$m_j(h,m)=\sum_{i=0}^ja_j(i)h^{j-i}m^i$. Then 
\begin{displaymath}
\begin{split}
\sum_{i=0}^ja_j(i)&h^{j-i}m^i=A^j_0h^j+\sum_{i=1}^{j}\frac{a_{j-1}(i-1) 
(h^{j-i}m^i-3^ih^j)}{i!}\\
&=\left(\frac{6^{j}}{(2j+1)!!}-\sum_{i=0}^{j-1}
\frac{a_{j-1}(i)3^{i+1}}{(i+1)!}\right)h^j
+\sum_{i=1}^{j}\frac{a_{j-1}(i-1)}{i!}h^{j-i}m^i.
\end{split}
\end{displaymath}
From here 
\begin{displaymath}
\begin{split}
a_j(i)=\frac{a_{j-i}}{\prod_{k=1}^ik!},\ \ \text{where }
a_j=\begin{cases}
\frac{6^{j}}{(2j+1)!!}-\sum_{i=1}^j\frac{3^i}{\prod_{k=1}^ik!}a_{j-i}\ \ 
&\text{if }j>0\\
1 &\text{if }j=0\end{cases}.
\end{split}
\end{displaymath}
For every rational number $x$ and every prime number $p$ let
$\mathrm{ord}_p(x)$ denote the integer 
$k$ such that there are integers $a,b$  such that 
$x=p^k(a/b)$ and $p\nmid ab$. 
Using the above recursive formula and by induction on $j$
\begin{displaymath}
\begin{split}
&\mathrm{ord}_2(a_j)=-\sum_{i=1}^j\mathrm{ord}_2(i!)\ \ \ \forall\ \ j=0,1,2,\hdots\\
\Rightarrow\ \ &0=\mathrm{ord}_2(a_0)=\mathrm{ord}_2(a_1)>
\mathrm{ord}_2(a_2)>\mathrm{ord}_2(a_3)>\mathrm{ord}_2(a_4)>\hdots .
\end{split}
\end{displaymath}
Thus$a_j\neq 0$, and consequently $A_j(p,q)\neq 0$ for $p+q=j$.
\end{proof}
Lemma~\ref{lem:String} implies that we may naturally extend $n_j(h,m)$ and 
define it for the values of $h,m$ which do not necessarily satisfy $m\geq 3h-1$ 
or $m\geq j$. For $m\geq 3h-1$  we trivially have 
$n_j(h,m)=n_{m-j}(h,m)$. 
It happens that the aforementioned symmetry extends
to a slightly larger range of values.\\

\begin{lem}\label{lem:symmetry}
For every $j$ satisfying $j\geq 2h+1$, $n_j(h,j-1)=0$. 
\end{lem}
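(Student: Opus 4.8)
The plan is to prove the vanishing $n_j(h,j-1)=0$ for $j\geq 2h+1$ by leveraging the polynomiality established in Lemma~\ref{lem:String}. The key observation is that $n_j(h,m)$ has been extended to a genuine polynomial in the two variables $h$ and $m$, so we are free to evaluate it at arguments $(h,m)=(h,j-1)$ even when the naive dimension constraint $m\geq 3h-1$ fails. The quantity $n_j(h,j-1)$ is thus a well-defined rational number, and our task is to show it is zero in the stated range. First I would try to set up a recursion in $j$ directly from the String equation~(\ref{eq:String}), which already expresses $n_j(h,m)$ in terms of $n_{j-1}(h,\cdot)$ summed from $k=3h-1$ to $m-1$. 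Specializing $m=j-1$ and using the intended value of the empty or short sum requires care, so the first real step is to pin down the correct interpretation of the telescoping identity $n_j(h,m)=n_j(h,m-1)+n_{j-1}(h,m-1)$ under the polynomial extension.

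The cleanest route I would attempt uses the symmetry $n_j(h,m)=n_{m-j}(h,m)$, which holds whenever $m\geq 3h-1$ and (by the remark preceding the lemma) persists on a slightly larger range. The heuristic is that at $m=j-1$ the symmetry would formally read $n_j(h,j-1)=n_{-1}(h,j-1)$, and the index $-1$ is degenerate; so one expects the value to be forced to vanish once one tracks the polynomial carefully. More concretely, I would proceed by downward induction, combining the recursion~(\ref{eq:String}) with the evaluation at $m=j-1$. Writing
\begin{displaymath}
n_j(h,m)=n_j(h,m-1)+n_{j-1}(h,m-1),
\end{displaymath}
and setting $m=j$, one gets $n_j(h,j)=n_j(h,j-1)+n_{j-1}(h,j-1)$. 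If I can independently show both that $n_j(h,j)=n_{j-1}(h,j-1)$ (a shifted-symmetry statement) and establish the base case, the desired $n_j(h,j-1)=0$ follows. Thus the strategy reduces to proving the auxiliary symmetry $n_j(h,j)=n_{j-1}(h,j-1)$ for $j\geq 2h+1$, together with a base case such as $n_{2h+1}(h,2h)=0$.

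To handle the base case, I would return to the genuine geometric interpretation: when the parameters satisfy the actual dimension constraint, $n_j(h,m)$ is (up to the normalizing factor $24^h h!$) an honest top intersection number of $\psi$ classes on $\Modbar_{h,m-3h+3}$. For $j$ large relative to $h$, one can hope that the relevant $\psi$-integral is forced to vanish for dimension or support reasons, or that the KdV/String bookkeeping of Lemma~\ref{lem:String} makes the polynomial vanish identically at $m=j-1$. I would verify the smallest cases $j=2h+1$ explicitly, possibly using the closed formula from Theorem~\ref{thm:symmetry-2} when $h=1$, to anchor the induction.

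The main obstacle I anticipate is justifying the manipulations of the recursion~(\ref{eq:String}) once we step outside the range $m\geq 3h-1$ where the sum $\sum_{k=3h-1}^{m-1}$ has its literal meaning; under the polynomial extension the sum must be reinterpreted as the polynomial in $m$ that agrees with the honest partial sum for large $m$, and at $m=j-1$ this can produce ``negative-length'' sums whose conventional value must be fixed consistently. Getting the boundary conventions exactly right—so that the telescoping identity and the symmetry $n_j(h,m)=n_{m-j}(h,m)$ both extend to $m=j-1$—is the delicate point, and I expect most of the work to lie in reconciling these two extensions rather than in any single computation. Once the conventions are aligned, the vanishing should drop out of the downward induction on $j$.
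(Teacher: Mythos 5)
There is a genuine gap: your central reduction is circular. By the very String identity you write down, $n_j(h,j)=n_j(h,j-1)+n_{j-1}(h,j-1)$, your auxiliary ``shifted symmetry'' $n_j(h,j)=n_{j-1}(h,j-1)$ is \emph{literally equivalent} to the claim $n_j(h,j-1)=0$ for the same pair $(j,h)$ — so nothing has been reduced, and no independent mechanism for proving either statement is offered (your proposed base case $n_{2h+1}(h,2h)=0$ is itself an instance of the lemma). The two fallbacks you sketch also fail. First, there is no geometric interpretation anywhere on the line $m=j-1$: the exponent of $\psi_1$ is $m-j=-1$ for \emph{every} $h$, so these values exist only through the polynomial extension of Lemma~\ref{lem:String} and cannot be attacked by dimension or support arguments (Theorem~\ref{thm:symmetry-2} settles only $h=1$). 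Second, the symmetry $n_j(h,m)=n_{m-j}(h,m)$ on the range $m\geq 2h$ is Proposition~\ref{prop:symmetry-1}, which the paper deduces \emph{from} this lemma; invoking it here is circular. A telling symptom is that the hypothesis $j\geq 2h+1$ never enters your argument concretely, yet the statement is false below the threshold: from Theorem~\ref{thm:symmetry-2} one gets the closed form $n_j(1,m)=\binom{m}{j}-\binom{m-2}{j-1}$, which satisfies the String recursion identically in $(j,m)$ (Pascal's rule) but has $n_2(1,1)=0-(-1)=1\neq 0$ at $j=2h=2$. Since the String equation only propagates the initial data $P_j(h)=n_j(h,3h-1)$ along $m$ at fixed genus, this example shows that String bookkeeping alone — however carefully the boundary conventions are reconciled — can never produce the vanishing or the threshold.

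The missing ingredient is the KdV equation, which is the engine of the paper's proof. From (\ref{eq:KdV-general}) the paper derives the identity (\ref{eq:KdV-replacement}), valid for \emph{all} $(h,m)$ because both sides are polynomial in $h$ and $m$; specializing at $m=j-1$ (where $2m-2j+1=-1$) gives the genus-mixing recursion $n_j(h,j-1)=(6h+3-j)\,n_{j-1}(h,j-2)-6h\,n_{j-1}(h-1,j-2)-q_j(h,j-1)$, precisely the kind of relation String cannot supply. The hypothesis $j\geq 2h+1$ enters exactly through the inhomogeneous term: $q_j(h,j-1)$ is the coefficient of $x^j$ in $(1+x+x^2)^h(1-x)^{j-1-2h}$, which for $2h+1\leq j<3h+1$ is a polynomial of degree $j-1$, hence $q_j(h,j-1)=0$ in the stated range (the case $j\geq 3h$ is disposed of directly by String). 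The resulting homogeneous recursion is then closed by a double induction (minimal counterexample in $h$, maximal in $j$) anchored at $h=0$, where $n_j(0,j-1)=\binom{j-1}{j}=0$. To salvage your outline you would need to replace the shifted-symmetry step with this KdV input or an equivalent genus-lowering relation; without it your downward induction has neither a base nor an engine.
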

\begin{proof}
If $j\geq 3h$, by the String equation
$$n_j(h,j-1)=n_j(h,j)-n_{j-1}(h,j-1)=1-1=0.$$
The KdV equation implies that for every $i,j\neq 0$ 
\begin{equation}\label{eq:KdV-general}
\begin{split}
(2i+1)\langle \tau_i\tau_j\tau_0^{n+2}\rangle_h
=&(2n+1)\langle \tau_{i-1}\tau_j \tau_0^{n+1}\rangle_h+\frac{1}{4}\langle\tau_{i-1}\tau_j\tau_0^{n+4}\rangle_{h-1}\\
&+\frac{1}{24^h\times h!}\sum_{p=0}^{n+1} {{n+1}\choose{p}}
{g\choose{\frac{i-p}{3}}}
\end{split}
\end{equation}
Setting $i=m-j$ and $n=m-3h-1$, (\ref{eq:KdV-general}) implies
\begin{equation}\label{eq:KdV-replacement}
\begin{split}
n_j(h,m)&=\frac{2m-6h-1}{2m-2j+1}n_{j-1}(h,m-1)\\&\ +
\frac{6h}{2m-2j+1}n_{j-1}(h-1,m-1)+\frac{1}{2m-2j+1}q_j(h,m),\\
q_j(h,m)&:=\sum_{p=0}^{m-3h}{{m-3h}\choose p}{ h\choose {\frac{j-p}{3}}}
=\sum_{p=0}^{j}{{m-3h}\choose p}{ h\choose {\frac{j-p}{3}}}.
\end{split}
\end{equation}
The left-hand-side and the right-hand-side of the first equation 
in (\ref{eq:KdV-replacement}) are rational functions in $h,m$, and 
(\ref{eq:KdV-replacement}) is  
thus satisfied for all values of $h$ and $m$. Setting $m=j-1$ in 
(\ref{eq:KdV-replacement})
\begin{displaymath}
n_j(h,j-1)=(6h+3-j)n_{j-1}(h,j-2)-6hn_{j-1}(h-1,j-2)-q_j(h,j-1).
\end{displaymath}

Fixing $j$, all the terms in  $q_j(h,j-1)$ are zero $j\geq 3h+1$. If $j<3h+1$ 
$$q_j(h,j-1)=\sum_{k=0}^{\left\lfloor\frac{j}{3}\right\rfloor}
(-1)^{j-3k}{{3h-3k}\choose {j-3k}}{{h}\choose {k}}$$
is the coefficient of $x^j$ in 
\begin{displaymath}
\begin{split}
\left(\sum_{k=0}^h(-1)^k{h\choose k}x^{3k}\right)
\left(\sum_{\ell=0}^\infty {{3h-j-\ell}\choose \ell}x^{\ell}\right)&
=(1-x^3)^h\frac{1}{(1-x)^{3h-j+1}}\\
=(1+x+x^2)^h&(1-x)^{j-1-2h}.
\end{split}
\end{displaymath} 
For $2h+1\leq j <3h+1$ the above expression is a polynomial in $x$ of degree $j-1$ and 
the coefficient of $x^j$ in it is thus zero. This implies that $q_j(h,j-1)=0$ for all
$h$ satisfying $j\geq 2h+1$. Thus, for every $h,j$ satisfying  $j\geq 2h+1$ 
\begin{equation}\label{eq:n_j(h,j-1)=0}
n_j(h,j-1)=(6h+3-j)n_{j-1}(h,j-2)-6hn_{j-1}(h-1,j-2).
\end{equation}

Note that $n_j(0,m)={m\choose j}$, and $n_j(0,j-1)$ is thus zero. Let 
$h$ be the smallest genus such that there is some
$j$ with $j\geq 2h+1$ and $n_j(h,j-1)\neq 0$, and let $j$ be the largest such $j$. 
Then,  $j>2h$ and $j-1>2(h-1)$, implying $n_{j+1}(h,j)=n_j(h-1,j-1)=0$ by 
the minimality assumption on $(h,j)$. Then (\ref{eq:n_j(h,j-1)=0})
 gives $n_j(h,j-1)=0$. This contradiction proves the lemma.
\end{proof}
From Lemma~\ref{lem:symmetry}, for every $j\geq 2h$ 
\begin{displaymath}\begin{split}
n_j(h,j)&=n_{j+1}(h,j+1)-n_{j+1}(h,j)\\
&=n_{j+2}(h,j+2)-n_{j+2}(h,j+1)\ \ \ \left(\text{since }n_{j+1}(h,j)=0\right)\\
&=...=n_{j+3h}(h,j+3h)=1.
 \end{split}\end{displaymath}
 Since $n_0(h,m)=1$, this gives the equality 
 $$n_0(h,m)=n_m(h,m)\ \ \ \forall \ m\geq 2h.$$
\begin{prop}\label{prop:symmetry-1}
For every $m\geq 2h\geq 0$ and every $0\leq j\leq m$
$$n_j(h,m)=n_{m-j}(h,m).$$ 
\end{prop}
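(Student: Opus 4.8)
The symmetry $n_j(h,m)=n_{m-j}(h,m)$ is geometrically transparent in the range $m\ge 3h-1$: there $n_j(h,m)$ is a genuine integral $24^h\,h!\int_{\Modbar_{h,m-3h+3}}\psi_1^{m-j}\psi_2^j$, and interchanging the roles of the first two marked points sends the integrand for $j$ to the one for $m-j$. All the content of the proposition therefore lies in extending this equality to the range $2h\le m<3h-1$, where $n_j(h,m)$ carries no enumerative meaning and is defined only through the polynomial extension of Lemma~\ref{lem:String}. The plan is to propagate the symmetry \emph{downward} in $m$, using the String equation as a Pascal-type recursion linking consecutive values of $m$.

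The convenient reformulation is through generating functions. For fixed $h$ set $N_m(x)=\sum_{j\ge 0}n_j(h,m)\,x^j$. The String relation $n_j(h,m)=n_j(h,m-1)+n_{j-1}(h,m-1)$, which by Lemma~\ref{lem:String} is a polynomial identity in $m$ and hence holds for every $m$, translates at once into the factorization
\[
N_m(x)=(1+x)\,N_{m-1}(x).
\]
Moreover the asserted symmetry at a fixed level $m$ is exactly the statement that $N_m$ is a \emph{palindromic} (self-reciprocal) polynomial of degree $m$, i.e. $x^{m}N_m(1/x)=N_m(x)$. Since $1+x$ is itself palindromic of degree $1$, a direct substitution shows that if $N_m=(1+x)N_{m-1}$ is palindromic of degree $m$ and $N_{m-1}$ is a polynomial of degree $m-1$, then cancelling the factor $1+x$ in the integral domain $\Q[x]$ forces $x^{m-1}N_{m-1}(1/x)=N_{m-1}(x)$; that is, $N_{m-1}$ is palindromic of degree $m-1$. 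This cancellation is precisely the tool to descend from level $m$ to level $m-1$.

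For the descent to be legitimate I must know that $N_m$ is an honest polynomial of degree exactly $m$ throughout $m\ge 2h$, equivalently that $n_m(h,m)\neq 0$ and $n_j(h,m)=0$ for all $j>m$. The first is the value $n_m(h,m)=1$ already recorded before the proposition. The second is the heart of the matter and is where Lemma~\ref{lem:symmetry} does the real work: I would prove $n_j(h,m)=0$ for $j>m$, $m\ge 2h$, by induction on the gap $g=j-m\ge 1$. The base case $g=1$ is exactly the vanishing $n_{m+1}(h,m)=0$ supplied by Lemma~\ref{lem:symmetry}, while the inductive step follows from the String identity $n_{m+g}(h,m+1)=n_{m+g}(h,m)+n_{m+g-1}(h,m)$, in which the two flanking terms are gap-$(g-1)$ values and therefore vanish by the inductive hypothesis. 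I expect this support statement to be the main obstacle, as it is what guarantees that dividing by $1+x$ creates no spurious top-degree term that would destroy palindromicity; it is also the only place where the boundary $m=2h$, rather than some larger threshold, enters.

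With these pieces in hand the proof assembles by downward induction on $m$. The base case is the geometric symmetry for $m\ge 3h-1$. Assuming $N_m$ palindromic of degree $m$ with $2h+1\le m\le 3h-1$, the support statement at level $m-1\ge 2h$, together with $n_{m-1}(h,m-1)=1$, shows that $N_{m-1}$ is a genuine polynomial of degree $m-1$; the factorization $N_m=(1+x)N_{m-1}$ and the palindrome cancellation of the previous paragraph then force $N_{m-1}$ to be palindromic of degree $m-1$, which is exactly the symmetry at level $m-1$. Iterating down to $m-1=2h$ yields $n_j(h,m)=n_{m-j}(h,m)$ for every $m\ge 2h$, completing the argument.
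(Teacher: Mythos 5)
Your proof is correct, and it reaches the proposition by a genuinely different organization than the paper's, even though the raw ingredients coincide. The paper argues by a minimal-counterexample induction carried out entirely inside the range $0\le j\le m$: it takes the smallest failing $h$, then the largest failing $m$ (failure is confined to $m\le 3h-2$ by the geometric symmetry), then the largest failing $j$ (the case $j=m$ being excluded by the equality $n_m(h,m)=1=n_0(h,m)$ derived from Lemma~\ref{lem:symmetry}), and obtains a contradiction from two applications of the polynomially extended String equation: $n_j(h,m)=n_{j+1}(h,m+1)-n_{j+1}(h,m)=n_{m-j}(h,m+1)-n_{m-j-1}(h,m)=n_{m-j}(h,m)$. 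You instead package the String recursion as the factorization $N_m(x)=(1+x)N_{m-1}(x)$ and descend in $m$ by cancelling the palindromic factor $1+x$ in $\Q[x]$. This forces you to prove the auxiliary vanishing $n_j(h,m)=0$ for all $j>m\ge 2h$ --- a statement the paper's proof never needs --- and your derivation of it is sound: the base case $n_{m+1}(h,m)=0$ is Lemma~\ref{lem:symmetry} with $m+1\ge 2h+1$, and the gap induction via $n_{m+g}(h,m)=n_{m+g}(h,m+1)-n_{m+g-1}(h,m)$ is valid because both right-hand terms have gap $g-1$ at levels $\ge 2h$; it is even consistent with values the paper computes elsewhere, e.g.\ $n_3(1,2)=0$ in the first rows of $M(2;g)$. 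Both routes rest on the same three pillars --- the geometric symmetry for $m\ge 3h-1$, the String equation as a polynomial identity (Lemma~\ref{lem:String}), and Lemma~\ref{lem:symmetry} --- so the difference is architectural: the paper's extremal argument is leaner, requiring no control of $n_j$ beyond $j\le m$, while your generating-function formulation makes the propagation mechanism transparent (symmetry at each level is palindromicity, and consecutive levels are linked by the palindromic factor $1+x$) and yields the support vanishing as a by-product. The one step worth making fully explicit is that the cancellation requires $x^{m-1}N_{m-1}(1/x)$ to be a polynomial, i.e.\ $\deg N_{m-1}\le m-1$; this is exactly where your support statement at level $m-1\ge 2h$ enters, and together with $n_{m-1}(h,m-1)=1$ (recorded in the paper just before the proposition) the descent closes without gaps.
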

\begin{proof}
Denote the claim of the proposition for $(j,h,m)$ by 
$\Pcal(j,h,m)$, i.e. we claim that for $j=0,...,m$ and 
$m\geq 2h\geq 0$, $\Pcal(j,h,m)$ is true.\\

For $h=0$ and $0\leq j\leq m$, $\Pcal(0,j,m)$ is trivial.
Suppose that $h$ is the smallest genus such that
for some $0\leq j\leq m$ satisfying $m\geq 2h$, $\Pcal(h,j,m)$ is not true.
Take $m$ to be the largest possible value such that there is some 
$j$ with $\Pcal(j,h,m)$ true.
Fix $h,m$ as above and let $j$ be the largest integer with 
$\Pcal(j,h,m)$ true. Lemma~\ref{lem:symmetry}
implies that $j<m$. Moreover, the assumptions on $(j,h,m)$ implies 
that $\Pcal(j+1,h,m+1)$ and $\Pcal(j+1,h,m)$ are true.  The String equation 
$$n_j(h,m)=n_{j+1}(h,m+1)-n_{j+1}(h,m)$$
gives
$$\Pcal(j+1,h,m)\  \ \&\ \ \Pcal(j+1,h,m+1)\ \Rightarrow\ 
\Pcal(j,h,m).$$
This completes the proof of the proposition.
\end{proof}

Since in the column of $M(m;g)$ 
indexed by $j$ the coefficient of $h^j$ is a non-zero constant, 
 subtracting  appropriate multiples of the columns corresponding 
to $i=0,2,...,j-1$ from the column corresponding to $j$, for $j=2,...,g+1$
kills the monomials of degree $2,...,j-1$, while leaving the determinant unchanged.
The determinant 
$$d_g(m)=\mathrm{Det}(M(m;g))$$
is thus equal to the determinant of a matrix of the form
\begin{displaymath}
M'(m;g):=\Big(a_jh^j+b_{j-1}(m)h\Big)_{\substack{h=0,...,g\\ j=0,2,3,...,g+1}},
\end{displaymath}
where the constants $a_j$ are determined in the proof of Lemma~\ref{lem:String}
and $b_j(m)$ is a polynomial of degree at most $i$ in $m$ (and with $b_{-1}(m)=0$)
for $j=1,...,g$.\\

Considering the order of the coefficient $c_j$ of $m^j$ in $b_j(m)$ in the above 
process, one can easily observe that 
$$\mathrm{ord}_2(c_j)=-\sum_{i=1}^j\mathrm{ord}_2(i!),$$
and $c_j$ is thus always non-zero. As a consequence, $b_j(m)$ is a polynomial 
of degree $j$. \\

Subtracting $h$ times the row corresponding to $1$ from the row corresponding to 
$h$  for $h=2,...,g$ keeps the determinant unchanged. We thus have 
\begin{displaymath}
\begin{split}
d_g(m)&=\mathrm{Det}\left(
\begin{array}{cccc}
a_2+b_1(m)&a_3+b_2(m)&...&a_{g+1}+b_g(m)\\
(2^2-2)a_2&(2^3-2)a_3&...&(2^{g+1}-2)a_{g+1}\\
(3^2-3)a_2&(3^3-3)a_3&...&(3^{g+1}-3)a_{g+1}\\
\vdots &\vdots  &\ddots &\vdots\\
(g^2-g)a_2&(g^3-g)a_3&...&(g^{g+1}-g)a_{g+1}\\
\end{array}
\right)
\end{split}
\end{displaymath}
This implies that 
\begin{displaymath}
\begin{split}
d_g(m)&=g!\left(\prod_{h=2}^{g+1}a_h\right)\mathrm{Det}\left(
\begin{array}{ccccc}
1 &-\widehat{b}_1(m)&-\widehat{b}_2(m)&...&-\widehat{b}_g(m)\\
1&1&1&...&1\\
1&2&2^2&...&2^g\\
\vdots &\vdots &\vdots  &\ddots &\vdots\\
1&g&g^2&...&g^g
\end{array}
\right)\\
&=(-1)^{g+1}\left(\prod_{h=2}^{g}(a_hh!)\right)\left(
c_gm^g+\text{lower degree terms}
\right),
\end{split}
\end{displaymath}
where $\widehat{b}_j(m)=b_j(m)/a_{j+1}$. We conclude
\begin{lem}\label{lem-determinant}
The determinant $d_g(m)$ of the matrix $M(m;g)$ is a polynomial 
in $m$ of degree $g$.
\end{lem}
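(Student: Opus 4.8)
The plan is to evaluate $d_g(m)=\mathrm{Det}(M(m;g))$ by a sequence of determinant-preserving column and then row operations, arranged so that in the end every occurrence of $m$ is confined to a single row; the degree in $m$ can then be read off directly. The two inputs I will lean on are already established: by Lemma~\ref{lem:String} each entry $n_j(h,m)$ is a polynomial of degree $j$ in $(h,m)$ whose coefficient of $h^j$ is the nonzero constant $a_j$, and the $2$-adic valuation identity $\mathrm{ord}_2(c_j)=-\sum_{i=1}^{j}\mathrm{ord}_2(i!)$ guarantees that the leading coefficients $c_j$ of the residual polynomials never vanish.

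First I would perform the column reduction. The columns of $M(m;g)$ are indexed by $j\in\{0,2,3,\dots,g+1\}$, so the available powers of $h$ are $\{h^0,h^2,h^3,\dots\}$ — every power \emph{except} $h^1$. Regarding the $j$-th column as the vector of values of the polynomial $n_j(\cdot,m)$ and using that each column's leading coefficient $a_i$ is a nonzero constant, I can successively subtract $\Q(m)$-multiples of columns $i<j$ (coefficients depending on $m$ but not on the row index $h$, hence leaving the determinant unchanged) to eliminate from column $j$ every power of $h$ other than $h^j$ and the one power $h^1$ that has no matching column. What survives is exactly $M'(m;g)=\bigl(a_jh^j+b_{j-1}(m)h\bigr)$, and tracking degrees through this elimination — together with the nonvanishing of $c_j$ — shows $\deg_m b_j=j$.

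Next comes the row reduction. Reading off $M'(m;g)$, the row $h=0$ is $(1,0,\dots,0)$, so expanding along it reduces the problem to the $g\times g$ minor on rows $h=1,\dots,g$ and columns $j=2,\dots,g+1$. Subtracting $h$ times the $h=1$ row from each row $h=2,\dots,g$ cancels the $b$-terms there, leaving $a_j(h^j-h)$; crucially, this isolates all $m$-dependence into the single first row $\bigl(a_j+b_{j-1}(m)\bigr)_j$. Factoring the nonzero constants $a_j$ out of the columns (and the remaining integer factors out of the rows) then exhibits $d_g(m)$ as a fixed nonzero constant times a Vandermonde-type determinant whose only $m$-dependent row carries the entries $-\widehat b_j(m)=-b_j(m)/a_{j+1}$ and whose other rows are the Vandermonde rows $(1,h,h^2,\dots,h^g)$ for $h=1,\dots,g$.

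Finally I would extract the degree. Since all $m$-dependence sits in one row, each term of the Laplace expansion along that row contributes exactly one factor $-\widehat b_j(m)$, so $\deg_m d_g(m)\le \max_j \deg_m b_j=\deg_m b_g=g$, giving ``degree at most $g$.'' For the reverse inequality, the unique source of an $m^g$ term is the entry $-\widehat b_g(m)$ paired with its cofactor, which is a genuine Vandermonde determinant on the distinct nodes $1,\dots,g$ and hence nonzero; the $m^g$-coefficient of $d_g(m)$ is therefore $c_g$ times a nonzero constant, and since $c_g\neq0$ the degree is exactly $g$. The one step where something could go wrong — and thus the place where the earlier work is essential — is precisely this nonvanishing of the top coefficient: a priori the column elimination could degrade the degree of $b_g(m)$, and it is exactly the $\mathrm{ord}_2$ estimate that rules this out. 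Everything else is bookkeeping.
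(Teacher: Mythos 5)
Your proposal is correct and follows essentially the same route as the paper's own argument: the identical column elimination producing $M'(m;g)=\bigl(a_jh^j+b_{j-1}(m)h\bigr)$ with $\deg_m b_j=j$ secured by the $\mathrm{ord}_2(c_j)=-\sum_{i=1}^{j}\mathrm{ord}_2(i!)$ computation, the identical row operations (expansion along the $h=0$ row, then subtracting $h$ times the $h=1$ row) confining all $m$-dependence to a single row, and the same Vandermonde cofactor argument showing the coefficient of $m^g$ is a nonzero constant multiple of $c_g$. There is nothing to correct.
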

In other words, except for at most $g$ values of $m$,
the matrix $M(m;g)$ is a matrix of full rank. 

\begin{thm}\label{thm:determinant}
The determinant $d_g(m)$ is a (non-zero) constant multiple of 
$$(m-2)\prod_{i=g+3}^{2g+1}(m-i).$$
\end{thm}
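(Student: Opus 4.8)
The plan is to pin down $d_g(m)$ by locating all of its roots. By Lemma~\ref{lem-determinant} and the computation preceding it, $d_g(m)$ is a polynomial in $m$ of degree exactly $g$, with non-zero leading coefficient $(-1)^{g+1}\left(\prod_{h=2}^{g}a_hh!\right)c_g$. The proposed product $(m-2)\prod_{i=g+3}^{2g+1}(m-i)$ also has degree $1+(g-1)=g$, and its roots $2,g+3,g+4,\dots,2g+1$ are $g$ \emph{distinct} integers (distinct since $g+3>2$ for $g>1$). Hence it suffices to show that $M(m;g)$ is singular, i.e. $d_g(m)=0$, at each of these $g$ values of $m$; the asserted factorization, with a non-zero constant equal to the leading coefficient, then follows at once. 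Throughout I index the rows of $M(m;g)$ by $h\in\{0,1,\dots,g\}$ and its columns by $j\in\{0,2,3,\dots,g+1\}$, the $(h,j)$-entry being $n_j(h,m)$.

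For $m=2$ I would exhibit a relation among the rows. The row $h=0$ is $\left(n_j(0,2)\right)_j=\left(\binom{2}{j}\right)_j=(1,1,0,\dots,0)$. For the row $h=1$, Theorem~\ref{thm:symmetry-2} (with only $a_1=m-j$ and $a_2=j$ nonzero) gives $n_j(1,m)=\binom{m}{j}\left(1-\tfrac{j(m-j)}{m(m-1)}\right)$, which is a genuine polynomial of degree $j$ in $m$; evaluating at $m=2$ yields $n_0(1,2)=n_2(1,2)=1$ and $n_j(1,2)=0$ for $j\ge 3$, so the row $h=1$ is again $(1,1,0,\dots,0)$. It is precisely the omission of the column $j=1$ that makes these two rows agree, since $n_1(1,2)=1\neq\binom{2}{1}$. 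Two equal rows force $d_g(2)=0$.

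For the remaining values $m\in\{g+3,\dots,2g+1\}$ I would instead produce a relation among the columns from the symmetry $n_j(h,m)=n_{m-j}(h,m)$ of Proposition~\ref{prop:symmetry-1}, which holds exactly for rows with $h\le m/2$. Call a row \emph{good} if $h\le\lfloor m/2\rfloor$ and \emph{bad} otherwise; there are $b=g-\lfloor m/2\rfloor$ bad rows. For every pair $(j,m-j)$ with $j<m-j$ and both indices in $\{0,2,3,\dots,g+1\}$, the difference $\mathrm{col}_j-\mathrm{col}_{m-j}$ vanishes in every good row, hence lies in the $b$-dimensional subspace spanned by the bad-row coordinates. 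A direct count shows the number of such pairs is $p=(g+1)-(\lfloor m/2\rfloor+1)+1=b+1$: the larger index $m-j$ ranges over the integers with $\lfloor m/2\rfloor<m-j\le g+1$, and the hypothesis $m\ge g+3$ guarantees that the complementary index $j=m-(m-j)$ always lands in $\{2,\dots,g\}\subset\{0,2,\dots,g+1\}$. Thus $b+1$ difference vectors lie in a $b$-dimensional space and are linearly dependent; since the smaller indices $\{j\}$ (all below $m/2$) and the larger indices $\{m-j\}$ (all above $m/2$) are disjoint, the dependence is a non-trivial relation among distinct columns of $M(m;g)$, so $d_g(m)=0$.

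The main obstacle is exactly the presence of the bad rows $h>m/2$, where Proposition~\ref{prop:symmetry-1} is unavailable — there $\overline{\mathcal{M}}_{h,m-3h+3}$ has a non-positive number of marked points and only the polynomial extension of $n_j(h,m)$ is defined — so that no single pair of columns need coincide outright. The pigeonhole count $p=b+1$, always one symmetric pair more than the number of bad rows, is what circumvents this and yields the relation uniformly across the whole range, degenerating gracefully to two genuinely equal columns in the extreme cases $m=2g,\,2g+1$, where $b=0$.
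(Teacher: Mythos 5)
Your argument is correct and is essentially the paper's own proof: both establish $d_g(2)=0$ by showing the rows $h=0$ and $h=1$ of $M(2;g)$ coincide (the omitted column $j=1$ being exactly what permits this), and both kill $d_g(m)$ for $g+3\le m\le 2g+1$ using the symmetry $n_j(h,m)=n_{m-j}(h,m)$ of Proposition~\ref{prop:symmetry-1} on the rows $h\le\left\lfloor\frac{m}{2}\right\rfloor$, before invoking the degree-$g$ statement of Lemma~\ref{lem-determinant} to conclude the factorization. Your pigeonhole count of $b+1$ column differences supported on the $b$ bad rows is merely a rephrasing of the paper's step of subtracting column $j$ from column $m-j$ and deriving the zero-block contradiction $\left\lceil\frac{m+1}{2}\right\rceil\ge\left\lfloor\frac{m}{2}\right\rfloor+2$, so the two proofs agree in substance.
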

\begin{proof}
By Lemma~\ref{lem-determinant}, it suffices to show that 
$$d_g(2)=d_g(g+3)=d_g(g+4)=...=d_g(2g+1)=0.$$
The first row of $M(2;g)$ consists of the following numbers
$$(1,{2\choose 2},{2\choose 3},...,{2\choose {g+1}})=(1,1,0,...,0).$$
Theorem~\ref{thm:symmetry-2} implies that the second row of 
$M(2;g)$ consists of the following numbers
$$(1,n_2(1,2),n_3(1,2),...,n_{g+1}(1,2))=(1,{2\choose 2}-{0\choose 1},0,...,0).$$
Thus the first two rows of $M(2;g)$ are equal and $d_g(2)=0$ for all $g\geq 1$.\\

For $g+3\leq m\leq 2g+1$, let $A(m;g)$ denote the sub-matrix of $M(m;g)$ 
which corresponds to the rows $h=0,...,\left\lfloor\frac{m}{2}\right\rfloor$
and the columns
$j=m-g-1,m-g,...,\left\lfloor\frac{m-1}{2}\right\rfloor$. Similarly, let 
$B(m;g)$ denote the sub-matrix of $M(m;g)$ 
which corresponds to the rows $h=0,...,\left\lfloor\frac{m}{2}\right\rfloor$
and the columns 
$j=\left\lceil\frac{m+1}{2}\right\rceil,\left\lceil\frac{m+1}{2}\right\rceil+1,...,g+1$.\\

Proposition~\ref{prop:symmetry-1} implies that the columns of $B(m;g)$
are the same as the columns of $A(m;g)$. Subtraction the $j$th column of 
$M(gm;g)$ from its $(m-j)$ column for 
$j=m-g-1,m-g,...,\left\lfloor\frac{m-1}{2}\right\rfloor$
produces a matrix with the same determinant, and with zeros in the block 
replaced for $B(m;g)$. If the determinant is non-zero the sum of the 
number of columns and the number of rows in $B(m;g)$ is at most $g+1$,
i.e. the total number of rows in $M(m;g)$. Thus, 
\begin{displaymath}
\begin{split}
&g+1\geq \left(\left\lfloor\frac{m}{2}\right\rfloor +1 \right)
+\left(g+2-\left\lceil\frac{m+1}{2}\right\rceil
\right)\\
\Leftrightarrow\ \ &\left\lceil\frac{m+1}{2}\right\rceil\geq 
\left\lfloor\frac{m}{2}\right\rfloor +2 .
\end{split}
\end{displaymath}
This contradiction implies that $d_g(m)=0$ for $m=g+3,g+4,...,2g+1$.
Since $d_g$ has at most $g$ roots, it is a constant multiple of 
$$(m-2)\prod_{i=g+3}^{2g+1}(m-i).$$
\end{proof}
\begin{remark}
 The final computation follows the law
\begin{displaymath}
d_g(m)=\frac{(-3)^{{g+1}\choose 2}}{(2g+1)!!\times g!}
(m-2)\prod_{i=g+3}^{2g+1}(m-i)
\end{displaymath}
for $g\leq 50$ according to our computations.
\end{remark}

\subsection{The rank of $\kappa_c^{3g-4+n}(\Mgnbar)$ for $g>1$}
As another application of Lemma~\ref{lem:rank-general} and Theorem~\ref{thm:determinant}
we examine the kappa ring in codimension one for $g>1$. 
With $d=3g-4+n$, we get $3g-2+n-d=2$. 
Through this section we will assume 
that $g>1$. We choose 
$P$ to be the union of $\PP(d;2)$ of partition of length $2$ with the 
set of all partitions of the form $\q'=(a_1> b_1\geq b_2>0)$ with 
$b_1+b_2\leq a_1$, and define $f(\q')=(a_1\geq a_2=b_1+b_2>0)$.
The order $<$ is defined by setting $(d)$ to be the largest element in $\PP(d,2)$,
and setting $(a_1\geq a_2>0)$ greater than $(b_1\geq b_2>0)$ if $a_1\geq b_1$.
Let us fix $\q=(a_1\geq a_2>0)$
and study the sub-matrices  $R_f(\q;g,n)$.\\

Let us first assume that $a_1>a_2\geq \max\{2,3g-2\}$, while 
$a_2\leq n-2$. For a corresponding partition 
$\p=\{(g_1,m_1),(g_2,m_2)\}\in \QQ(\q;g,n)$, $g_2$ is an arbitrary genus between 
$0$ and $g$, $g_1=g-g_2$, and $m_i=a_i+3-3g_i$, i.e. the matrix $R_f(\q;g,n)$ 
has precisely $g+1$ rows. The columns correspond to 
\begin{displaymath}
\q_0=(a_1\geq a_2),\ \ \ \q_j=(a_1\geq (a_2-j)\geq j)\ \ \ \ \ j=1,...,
\left\lfloor\frac{a_2}{2}\right\rfloor\geq \left\lfloor\frac{3g}{2}\right\rfloor-1.
\end{displaymath} 
The  $(g+1)\times (\lfloor a_2/2\rfloor+1)$  matrix
 $R_f(\q;g,n)$  consists of the entries $m_{hj}$, with $h=0,...,g$ and 
 $j=0,...,\lfloor a_2/2\rfloor$, 
 which are given by 
\begin{displaymath}
m_{hj}=\begin{cases}
\left(\int_{\overline{\Mod}_{g-h,a+3h}}\psi_1^{a_1+1}\right)
\left(\int_{\overline{\Mod}_{h,a_2-3h+4}}\psi_1^{a_2+1}\right)\ \ \ &\text{if }j=0\\
&\\
\left(\int_{\overline{\Mod}_{g-h,a+3h}}\psi_1^{a_1+1}\right)
\left(\int_{\overline{\Mod}_{h,a_2-3h+5}}\psi_1^{a_2-j+1}\psi_2^{j+1}\right)\ \ \ 
&\text{if }j\neq 0,\\
\end{cases}
\end{displaymath}
where $a=a_1-3g+4$. Thus, the rank of $R_f(\q;g,n)$ is equal to the rank of the matrix
$N(a_2+2;g)$, where 
\begin{displaymath}
\begin{split}
N(m;g)&:=\Big(n_{j}(h,m)\Big)_{\substack{h=0,...,g\\ j=0,2,3,...,\lfloor m/2\rfloor}}.
\end{split}
\end{displaymath}

Since $m=a_2+2\geq \max\{3g,4\}$, the matrix $M(g;m)$ is a sub-matrix of 
$N(g;m)$. This sub-matrix is full-rank by Theorem~\ref{thm:determinant}, and the
rank $r_f(\q;g,n)$ of $R_f(\q;g,n)$  is equal to $g+1$. Thus,
\begin{displaymath}
\sum_{\substack{\q=(a_1>a_2)\in P(d)\\ \max\{2,3g-2\}\leq a_2\leq n-2}}
r_f(\q;g,n)\geq \left(\sum_{\substack{\q=(a_1>a_2)\in P(d)\\ \max\{2,3g-2\}
\leq a_2\leq n-2}}(g+1)\right)
\end{displaymath}

When $a_1=a_2\geq 3g-2$, the possible combinatorial cycles correspond to the values 
$0\leq g_2\leq \lfloor g/2\rfloor$, and with a similar argument we have 
\begin{displaymath}
r_f(\q;g,n)=
\left\lfloor \frac{g+2}{2}\right\rfloor.
\end{displaymath}

For arbitrary values of  $a_2>4$, 
for $$\p=\big\{(g_1,m_1),(g_2,m_2)\big\}\in Q(\q;g,n)$$
we have 
$$\max\left\{0,\left\lceil \frac{a_2+2-n}{3}\right\rceil\right\}\leq g_2 
\leq \min\left\{\left\lfloor \frac{a_2+2}{3}\right\rfloor,g\right\}.$$ 
Let $\mathrm{row}_f(\q;g,n)$ denote  the number of rows 
in $R_f(\q;g,n)$. The matrix $R_f(\q;g,n)$ consists of the multiples 
of a subset of the rows in the matrix $N(a_2+2;h)$ where 
$h=\min\{g,\lfloor (a_2+2)/3\}$. Its rank is thus  equal to 
$\mathrm{row}_f(\q;g,n)$.\\

Finally, we gather the rows and the columns corresponding to the partitions
of the form $(a_1\geq a_2>0)$ with $a_2\in\{0,1,2,3,4\}$ in one block (see 
Remark~\ref{remark:1}). The 
corresponding partitions of $d$ consist of the following list:
$$A=\big\{(d),(d-1,1),(d-2,2),(d-3,3),(d-4,4)\big\}.$$
For every $\p\in \QQ(d;g,n)$ and every partition $\q$ in 
\begin{displaymath}
\begin{split}
\Big\{(d),(d-&1,1),(d-2,2),(d-2,1,1),(d-3,3),(d-3,2,1),(d-3,1,1,1),\\
&(d-4,4),(d-4,3,1),(d-4,2,2),(d-4,1,1,1,1)\Big\}
\end{split}
\end{displaymath}
if $\langle \q,\p\rangle \neq 0$ then  $\q(\p)\in A$.  
There are $11$ such $\p\in \QQ(d;g,n)$ which, together with the above $11$ 
partitions of $d$, determine  an $11\times 11$ sub-matrix
of $R_f(d;g,n)$. \\

Using the explicit formulas in Table~\ref{table:psi-integrals} for 
some of the $\psi$ integrals, one may compute the determinant of the above $11\times 11$
matrix. Surprisingly, the determinant is independent of $d$ and equals
\begin{displaymath}
-\frac{(g-1)^2(4928g^4-275516g^3-437138g^2+62924g-334941)}
{12936 \times 10^7}.
\end{displaymath} 
Thus, the aforementioned $11\times 11$ matrix is always of rank $11$.

\begin{table}
\caption{Small $\psi$ integrals} \label{table:psi-integrals}
\centering 
\begin{tabular}{|l|*{2}{c|}}\hline                
$\begin{array}{c}\\ \q=(a_1,...,a_k)\\ \\ \end{array}$&{The integral $\frac{1}{g!\times 24^g} 
\int_{\Modbar_{g,d(\q)+3-3g}}\prod_{i=1}^k\psi_i^{a_i}$} \\ \hline \hline           
 $(d)$ &$\begin{array}{c}\\ 1\\  \end{array}$\\ \hline
$( d,2)$&$\begin{array}{c}\\ {{d+2-g}\choose 2}+\frac{g(2g+3)}{5}\\  \end{array}$\\ \hline
$( d,3)$&$\begin{array}{c}\\ {{d+3-g}\choose 3}+{{d+3-g}\choose 1}\frac{g(2g+3)}{5}-
\frac{g(8g^2+60g+37)}{105}\\ \end{array}$\\ \hline
$( d,4)$&$\begin{array}{c}\\
{{d+4-g}\choose 4}+{{d+4-g}\choose 2}\frac{g(2g+3)}{5}
-{{d+4-g}\choose 1}\frac{g(8g^2+60g+37)}{105}\\
+\frac{g(g+1)(2g+3)(2g+5)}{70}\\ \end{array}$\\ \hline
$( d,5)$&$\begin{array}{c}\\
{{d+5-g}\choose 5}+{{d+5-g}\choose 3}\frac{g(2g+3)}{5}
-{{d+5-g}\choose 2}\frac{g(8g^2+60g+37)}{105}\\
+{{d+5-g}\choose 1}\frac{g(g+1)(2g+3)(2g+5)}{70}
-\frac{g(2g+3)(8g^3+84g^2+55g+84)}{1155}\\ \end{array}$\\ \hline
$( d,2,2 )$&$\begin{array}{c}\\
6{{d+4-g}\choose 4}+{{d+4-g}\choose 2}\frac{2g(2g+3)}{5}
+\frac{g(4g^3-4g^2-41g-9)}{25}\\
\end{array}$\\ \hline
$( d,2,2,2)$&$\begin{array}{c}\\
90{{d+6-g}\choose 6}+{{d+6-g}\choose 4}\frac{18g(2g+3)}{5}
+{{d+6-g}\choose 2}\frac{3g(4g^3-4g^2-41g-9)}{25}\\
+\frac{g(8g^5-60g^4-70g^3+1275g^2+1067g+30)}{125}\\
\end{array}$\\ \hline
$( d,2,2,2,2 )$&$\begin{array}{c}\\
2520{{d+8-g}\choose 8}+{{d+8-g}\choose 6}\frac{360g(2g+3)}{5}
+{{d+8-g}\choose 4}\frac{36g(4g^3-4g^2-41g-9)}{25}\\
+{{d+8-g}\choose 2}\frac{4g(8g^5-60g^4-70g^3+1275g^2+1067g+30)}{125}\\
+\frac{g(16 g^7 - 288 g^6+  1192 g^5+ 7440 g^4- 57671 g^3 - 120522 g^2- 34677 g
-20490)}{625}\\
\end{array}$\\ \hline
( d,3,2)&$\begin{array}{c}\\
10{{d+5-g}\choose 5}+{{d+5-g}\choose 3}\frac{4g(2g+3)}{5}
-{{d+5-g}\choose 2}\frac{g(8g^2+60g+37)}{105}\\
+{{d+5-g}\choose 1}\frac{g(4g^3-4g^2-41g-9)}{25}
-\frac{g(2g+3)(8g^3+12g^2-467g-78)}{525}\\
\end{array}$\\ \hline
$( d,4,2)$&$\begin{array}{c}\\
15{{d+6-g}\choose 6}+{{d+6-g}\choose 4}\frac{7g(2g+3)}{5}
-{{d+6-g}\choose 3}\frac{3g(8g^2+60g+37)}{105}\\
+{{d+6-g}\choose 2}\frac{g(76g^3+44g^2-419g-51)}{350}
-{{d+6-g}\choose 1}\frac{g(2g+3)(8g^3+12g^2-467g-78)}{525}\\
+\frac{g(g+2)(2g+1)(2g+3)(2g^2-11g-61)}{350}\\
\end{array}$\\ \hline
$( d,3,3)$&$\begin{array}{c}\\
20{{d+6-g}\choose 6}+{{d+6-g}\choose 4}\frac{8g(2g+3)}{5}
-{{d+6-g}\choose 3}\frac{2g(8g^2+60g+37)}{105}\\
+{{d+6-g}\choose 2}\frac{2g(4g^3-4g^2-41g-9)}{25}
-{{d+6-g}\choose 1}\frac{2g(2g+3)(8g^3+12g^2-467g-78)}{525}\\
+\frac{g(64g^5+384g^4-13376g^3-76224g^2-71315g-15933)}{11025}\\
\end{array}$\\ \hline
\end{tabular}
\label{tab:hresult}
\end{table}

Gathering the above information one arrives at the following theorem.
\begin{thm} For $g>1$ and $d=3g-4+n$, 
the rank of $\kappa^{d}_c(\Mgnbar)$ is equal to
\begin{displaymath}
\left\lceil \frac{(n+1)(g+1)}{2}\right\rceil -1.
\end{displaymath}
\end{thm}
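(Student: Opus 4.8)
The plan is to assemble the results of this subsection. By Proposition~\ref{prop:main-1} the rank equals $r(d;g,n)=\mathrm{rank}\,R(d;g,n)$; since $d=3g-4+n$ gives $3g-2+n-d=2$, Proposition~\ref{prop:rank-of-combinatorial-kappa-ring} tells us that the only columns which can contribute are refinements of partitions in $\PP(d,2)$. I would fix the fine assignment $f:P\ra\PP(d,2)$ and total order $<$ introduced above and invoke Lemma~\ref{lem:rank-general}, so that $r(d;g,n)\geq\sum_{\q\in\PP(d,2)}r_f(\q;g,n)$. The crux is to prove that every diagonal block $R_f(\q;g,n)$ has full row rank; then, since these blocks partition the rows of $R(d;g,n)$, this lower bound already equals the total number of rows, which in turn trivially bounds the rank from above, forcing equality.

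First I would evaluate $r_f(\q;g,n)$ for $\q=(a_1\geq a_2)$ according to the size of $a_2$. The rows of each block are indexed by the admissible genus splittings $g=g_1+g_2$ obeying $0\leq g_i\leq\lfloor(a_i+2)/3\rfloor$, with $g_2$ and $g-g_2$ identified when $a_1=a_2$. When $a_1>a_2\geq\max\{2,3g-2\}$ all $g+1$ splittings occur and, writing $m=a_2+2$, the block reduces to $N(m;g)$, which contains the square matrix $M(m;g)$; as $m\geq 3g>2g+1$ lies beyond every root $\{2,g+3,\dots,2g+1\}$ of $d_g$, Theorem~\ref{thm:determinant} gives $\det M(m;g)\neq0$, so $r_f(\q;g,n)=g+1$. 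When $a_1=a_2$ the same argument through the symmetry yields $r_f(\q;g,n)=\lfloor(g+2)/2\rfloor$. For the intermediate range $4<a_2<3g-2$ only the splittings with $0\leq g_2\leq h:=\lfloor(a_2+2)/3\rfloor<g$ remain, the block is a selection of rows of the full-rank matrix $N(a_2+2;h)$ (full rank again by Theorem~\ref{thm:determinant} in genus $h$, since $a_2+2\geq 3h>2h+1$), and therefore has rank equal to its number of rows. Finally the partitions with $a_2\in\{0,1,2,3,4\}$ are collected into one block as in Remark~\ref{remark:1}: the eleven admissible data $\p$ and the eleven columns listed above form an $11\times11$ matrix whose determinant, computed from the explicit $\psi$-integrals of Table~\ref{table:psi-integrals}, is the nonzero polynomial in $g$ recorded above (independent of $d$), so this block is full rank as well.

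Granting full row rank of each block, Lemma~\ref{lem:rank-general} gives $r(d;g,n)\geq\sum_{\q}r_f(\q;g,n)=|\QQ(d;g,n)|$, the total number of codimension-one combinatorial cycles, and since a rank never exceeds the number of rows we get $r(d;g,n)=|\QQ(d;g,n)|$. It remains to count these cycles: the irreducible self-node $\{(g-1,n+2)\}$, together with the reducible ones carrying data $\{(g_1,m_1),(g_2,m_2)\}$ with $g_1+g_2=g$, $m_1+m_2=n+2$ and $2g_i+m_i\geq3$, counted as unordered pairs. Enumerating these, separating the split $\{0,g\}$ (which forces $m\geq3$ on the genus-zero side), the splits with both genera positive, and the self-paired configuration occurring only when $g$ and $n$ are both even, gives
\begin{displaymath}
|\QQ(d;g,n)|=\left\lceil\frac{(n+1)(g+1)}{2}\right\rceil-1,
\end{displaymath}
which is the claimed rank. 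For $g=1$ this count would exceed the rank by one, reflecting the relation of Theorem~\ref{thm:divisor-relation}; for $g>1$ no such relation survives, and the rank attains the full count.

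The main obstacle is securing full row rank for all blocks, not merely the generic one. The generic and intermediate blocks are governed by Theorem~\ref{thm:determinant}, whose hypotheses I would verify by checking that $m=a_2+2$ always clears the root interval $\{2,g+3,\dots,2g+1\}$ (and its genus-$h$ analogue), using $a_2\geq 3g-2$ respectively $h=\lfloor(a_2+2)/3\rfloor$. The genuinely delicate step is the exceptional block for $a_2\leq4$, where the clean structure of $M(m;g)$ is unavailable and one must compute the $11\times11$ determinant explicitly from Table~\ref{table:psi-integrals} and confirm that the resulting degree-six polynomial in $g$ is nonzero for every integer $g>1$. The remaining effort is the parity bookkeeping in the final enumeration, in particular correctly handling the symmetric splittings and the small-$n$ ranges in which the constraint $a_2\leq n-2$ becomes binding.
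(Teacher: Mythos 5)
Your proposal follows the paper's own proof essentially step for step: the same fine assignment $f:P\to\PP(d,2)$ with Lemma~\ref{lem:rank-general}, the same case analysis on $a_2$ (the generic block reduced to $M(a_2+2;g)$ inside $N(a_2+2;g)$ and handled by Theorem~\ref{thm:determinant}, the symmetric case giving $\lfloor (g+2)/2\rfloor$, the intermediate range via rows of $N(a_2+2;h)$, and the exceptional $a_2\le 4$ partitions gathered into the $11\times 11$ block with determinant computed from Table~\ref{table:psi-integrals}), concluding by matching the lower bound against the total row count $\left\lceil (n+1)(g+1)/2\right\rceil-1$. Your explicit enumeration of $\QQ(d;g,n)$ (irreducible cycle plus unordered stable splittings, with the parity correction) is a correct verification of the row count that the paper states without detail, so the argument is sound and essentially identical to the paper's.
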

\begin{proof}
For $d=3g-4+n$, the lower bound on the rank is given by
the sum of the rank of the above $11\times 11$ 
matrix, plus the following sum 
\begin{displaymath}
\sum_{\substack{\q=(a_1\geq a_2>0)\\
5\leq a_2\leq (3g-4+n)/2}}r_f(\q;g,n).
\end{displaymath}
The above computations may then be used to compute this lower bound explicitly.
Let $\mathrm{row}(d;g,n)$ denote the number of rows in $R(d;g,n)$.
For $g>1$ we thus have
\begin{equation}\label{eq:rank-inequality}
\begin{split}
R_f(d;g,n)&\geq 11+ \sum_{\substack{\q=(a_1\geq a_2>4)}}r_f(\q;g,n)\\
&=\mathrm{row}(d;g,n)=\left\lceil\frac{(n+1)(g+1)}{2}\right\rceil -1.
\end{split}
\end{equation}
Since $R_f(d;g,n)$ can not be larger than $\mathrm{row}(d;g,n)$, the 
inequality in (\ref{eq:rank-inequality}) is in fact an equality.
\end{proof}

\section{The asymptotic behaviour of the ranks }
The behaviour of the rank of $\kappa_c^d(\Mgnbar)$ 
for arbitrary values of $g,n$ and $d$ 
seems to be more complicated. Table~\ref{table:codim} illustrates the computations 
for genus $1,2,3$ in codimensions $2,3,4,5$ and $6$ when the number $n$ of the 
marked points is less than or equal to $10$. The second author has a computer 
program for computing the relevant kappa integrals, as well as the rank of the matrix
$R(d;g,n)$. Computations beyond these 
tables require large memory and are relatively time consuming even over very 
fast computers.  \\
\begin{table}
\caption{The tables illustrate the rank of $\kring$ in codimensions
$2,3,4,5$ and $6$ respectively.}\label{table:codim}
\vspace{4mm}
\begin{tabular}{|l|*{10}{c|}}\hline
\multicolumn{11}{|c|}{Codimension=2}\\\hline
\backslashbox{Genus}{Points}& 1 & 2 & 3 & 4 & 5 & 6 & 7 & 8 & 9 & 10 \\\hline
0 & 0 & 0 & 0 & 0 & 1 & 1 & 2 & 3 & 4 & 5 \\\hline
1 & 0 & 1 & 1 & 2 & 3 & 5 & 7 & 10 & 13 & 17 \\\hline
2 & 2 & 3 & 5 & 7 & 11 & 15 & 21 & 28 & 36 & 45 \\\hline
\end{tabular}

\vspace{7mm}
\begin{tabular}{|l|*{10}{c|}}\hline
\multicolumn{11}{|c|}{Codimension=3}\\\hline
\backslashbox{Genus}{Points}& 1 & 2 & 3 & 4 & 5 & 6 & 7 & 8 & 9 & 10 \\\hline
0 & 0 & 0 & 0 & 0 & 0 & 1 & 1 & 2 & 3 & 5 \\\hline
1 & 0 & 0 & 1 & 1 & 2 & 3 & 5 & 7 & 11 & 15 \\\hline
2 & 1 & 2 & 3 & 5 & 7 & 11 & 15 & 22 & 30 & 42 \\\hline
\end{tabular}

\vspace{7mm}
\begin{tabular}{|l|*{10}{c|}}\hline
\multicolumn{11}{|c|}{Codimension=4}\\\hline
\backslashbox{Genus}{Points}& 1 & 2 & 3 & 4 & 5 & 6 & 7 & 8 & 9 & 10 \\\hline
0 & 0 & 0 & 0 & 0 & 0 & 0 & 1 & 1 & 2 & 3 \\\hline
1 & 0 & 0 & 0 & 1 & 1 & 2 & 3 & 5 & 7 & 11 \\\hline
2 & 1 & 1 & 2 & 3 & 5 & 7 & 11 & 15 & 22 & 30 \\\hline
\end{tabular}

\vspace{7mm}
\begin{tabular}{|l|*{10}{c|}}\hline
\multicolumn{11}{|c|}{Codimension=5}\\\hline
\backslashbox{Genus}{Points}& 1 & 2 & 3 & 4 & 5 & 6 & 7 & 8 & 9 & 10 \\\hline
0 & 0 & 0 & 0 & 0 & 0 & 0 & 0 & 1 & 1 & 2 \\\hline
1 & 0 & 0 & 0 & 0 & 1 & 1 & 2 & 3 & 5 & 7 \\\hline
2 & 0 & 1 & 1 & 2 & 3 & 5 & 7 & 11 & 15 & 22 \\\hline
\end{tabular}

\vspace{7mm}
\begin{tabular}{|l|*{10}{c|}}\hline
\multicolumn{11}{|c|}{Codimension=2}\\\hline
\backslashbox{Genus}{Points}& 1 & 2 & 3 & 4 & 5 & 6 & 7 & 8 & 9 & 10 \\\hline
0 & 0 & 0 & 0 & 0 & 0 & 0 & 0 & 0 & 1 & 1\\\hline
1 & 0 & 0 & 0 & 0 & 0 & 1 & 1 & 2 & 3 & 5\\\hline
2 & 0 & 0 & 1 & 1 & 2 & 3 & 5 & 7 & 11 & 15 \\\hline
\end{tabular}
\end{table}
We apply the strategy of the previous section, and study the 
asymptotic behaviour of the rank of  $\kappa^d_c(\Mgnbar)$ instead, 
when the genus $g$ and the codimension $e=3g-3+n-d$ are fixed. 
The number of elements in $P(d,e+1)$, as $d$ grows large,
is asymptotic to ${d+e\choose e}/(e+1)!$.  
The number of rows in the 
matrix $R(d;g,n)$, i.e. the number of elements in $\QQ(d;g,n)$, 
is thus asymptotic to either of  
$$|\PP(d,e+1)|{g+e \choose e}\ \ \ \text{and}\ \ \ 
\frac{{n+e\choose e}{g+e\choose e}}{(e+1)!}.$$ 

Set $h=g+2$.
As $n$ grows large, the asymptotic growth of $\PP(d,e+1)$ is the same as the 
growth of the number 
of partitions $\q=(a_0\geq a_1\geq ...\geq a_e)$ satisfying 
$$a_e> 2eh,\ \ \ \ a_i\leq a_{i-1}-2h\ \ \ \ \text{for }i=1,...,e-1.$$
Denote the set of all such partitions by $\PP_h(d;e+1)$.\\

For $\q=(a_0\geq a_1\geq \hdots \geq a_e>0)\in \PP_h(d;e+1)$ 
let $P(\q)\subset \PP(d,2e+1)$ denote the set of partitions 
\begin{displaymath}
\q'=(a_0>a_1-b_1>...,a_e-b_e>b_e>b_{e-1}>\hdots >b_1),\ \ \ (i-1)h<b_{i}\leq ih.
\end{displaymath}
For every $\q'\in P(\q)$ define $f(\q')=\q$. This gives a function 
$$f:P=\bigcup_{\q\in \PP_h(d;e+1)}P(\q)\lra \PP_h(d;e+1).$$
Equip $\PP_h(d;e+1)$ with the lexicographic order, setting $\q=(a_0> ...> a_e>0)$
less than $\q=(a_0'> ...> a_e'>0)$ if there is some $i\geq 0$ such that 
$a_j=a_j'$ for $j=0,...,i-1$ and $a_i<a_i'$ (while $P$ is partially ordered 
with $\lhd$).\\

Although $f$ is not a fine assignment and the second condition in 
Definition~\ref{def:fine-assignment} may fail, 
it differs from a fine assignment in a controllable 
way, as will be discussed below. Let 
$$\q'=(a_0,a_1-b_1,...,a_e-b_e,b_e,...,b_1)$$ and 
suppose that $\q''\in \PP_h(d;e+1)$ refines $\q'$ while $\q''<\q=(a_0,...,a_e)$. Then 
\begin{displaymath}
\q''=(a_0\geq a_1\geq ...\geq a_{i-1}\geq a_i-b_i \geq c_{i+1}\geq ...\geq c_e>0)
\end{displaymath} 
for some $i>0$ and some positive integers $c_{i+1},...,c_e$. 
Associated with every $\q, \q'$ and  $\q''$ as above, 
and every $\p\in\QQ(\q'';g,n)$ we put  
$\frac{1}{\Lambda(\p)}\langle \q',\p\rangle$ in a matrix 
$E_i(d;g,n)$ as the entry corresponding to the row indexed by $\p$ and the column 
indexed by $\q'$. For $i=1,...,e$ the rows of the matrix $E_i(d;g,n)$ are labelled 
by 
$$Q=\bigcup_{\q\in\PP_h(d;e)}\QQ(\q;g,n),$$ 
while its columns are labelled by $P$.\\
 
The sub-matrix $S(d;g,n)$ of 
$R(d;g,n)$ determined by the columns corresponding to $P\subset \PP(d)$ and the rows 
corresponding to $Q\subset \QQ(d;g,n)$ is thus a sum
\begin{displaymath}
S(d;g,n)=T(d;g,n)+\sum_{i=1}^eE_i(d;g,n),
\end{displaymath}
where $T(d;g,n)$ is an upper triangular matrix with respect to the total 
order $<$. Lemma~\ref{lem:rank-general} implies that 
\begin{displaymath}
\rank(T(d;g,n))\geq \sum_{\q\in \PP(d;e)}r_f(\q;g,n)
\end{displaymath}

\begin{prop}\label{prop:asymptotic-lemma}
There is a subset $A\subset \PP_h(d;e+1)$ of size $\frac{(eh)^2}{2}|\PP(d;e+1)|$ 
such that for every $\q\in \PP_{h}(d;e+1)\setminus A$
$$r_f(\q;g,n)={g+e\choose e}.$$
\end{prop}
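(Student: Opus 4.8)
The plan is to identify $R_f(\q;g,n)$, after a positive rescaling of its rows and the discarding of a lower-order family of columns, with the restriction to a simplex of lattice points of a Kronecker product of matrices of the type studied in Section~5, and then to invoke a variant of Theorem~\ref{thm:determinant} to see that each factor has full row rank. First I would pin down the rows. Writing $\q=(a_0\geq\cdots\geq a_e)$, so that $\q$ has length $k=e+1$, the lower bound of Lemma~\ref{lem:p-cycles}(1) becomes $(d+k+2)-(2g+n)=g$ after substituting $n=d-3g+3+e$, which together with the upper bound $\sum_i g_i\leq g$ forces $\sum_{i=0}^e g_i=g$. Since every part obeys $a_i>2eh$ with $h=g+2$, the ceilings $\lfloor(a_i+2)/3\rfloor$ exceed $g$ (immediate from $a_i>2e(g+2)$ when $e\geq2$), so the remaining inequalities of Lemma~\ref{lem:p-cycles}(1) are inactive and $\QQ(\q;g,n)$ is in bijection with the $\binom{g+e}{e}$ non-negative solutions of $\sum_{i=0}^e g_i=g$; as the $a_i$ are distinct outside a lower-order family, this bijection is honest even though $\p$ is a multiset. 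Thus $r_f(\q;g,n)\leq\binom{g+e}{e}$, and the task is to exhibit that many independent rows.

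Next I would factor the entries. For a column $\q'=(a_0,a_1-b_1,\ldots,a_e-b_e,b_e,\ldots,b_1)\in P(\q)$ and a row $\p\leftrightarrow(g_0,\ldots,g_e)$, formula~(\ref{eq:int-over-cycles}) writes $\langle\q',\p\rangle$ as a sum over distributions of the $2e+1$ legs among the $e+1$ vertices, and the degree bookkeeping there shows a distribution contributes only when the parts sent to the vertex of genus $g_i$ sum to $a_i$. The decisive combinatorial step is that the spacing built into $\PP_h(d;e+1)$ and $P(\q)$ forces this distribution to be unique for all $\q$ outside a lower-order family: $a_0$ must form its own block, and, processing the ``big'' parts $a_i-b_i$ in decreasing order, each can only be completed to the sum $a_i$ by the single small part $b_i$. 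The $\q$ for which a coincidence between a gap $a_i-a_{i+1}$ and a value $b_i$ permits a second grouping are placed in $A$. For the surviving $\q$ the entry collapses to the single diagonal term, and dividing by $\Lambda(\p)=\prod_i(24^{g_i}g_i!)^{-1}$ gives
\[
\frac{1}{\Lambda(\p)}\langle\q',\p\rangle=M_0(g_0)\prod_{i=1}^e n_{b_i+1}(g_i,a_i+2),\qquad
M_0(g_0)=24^{g_0}g_0!\int_{\Modbar_{g_0,a_0-3g_0+4}}\psi_1^{a_0+1}>0,
\]
where $n_j(h,m)$ is the integral of Section~5.

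Now the rank becomes transparent. The factor $M_0(g_0)>0$ depends only on $g_0$, so it is a positive rescaling of the rows and may be discarded; and since $g_0=g-\sum_{i\geq1}g_i$ the rows are indexed bijectively by the simplex $\Sigma=\{(g_1,\ldots,g_e):g_i\geq0,\ \sum g_i\leq g\}$. The rescaled matrix is then exactly the restriction to the rows $\Sigma$ of the Kronecker product $\bigotimes_{i=1}^e N_i$, where $N_i=\big(n_{b_i+1}(g_i,a_i+2)\big)$ has rows $g_i=0,\ldots,g$ and columns indexed by $b_i\in((i-1)h,ih]$. If each $N_i$ has full row rank $g+1$, then $\bigotimes_i N_i$ has full row rank $(g+1)^e$ on the whole box $\{0,\ldots,g\}^e$, so all of its rows — in particular the $\binom{g+e}{e}$ rows lying in $\Sigma$ — are independent, whence $r_f(\q;g,n)=\binom{g+e}{e}$.

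It remains to prove that $N_i$ has full row rank, and this is the main obstacle. Each column of $N_i$ is, as a function of the genus row-index, the polynomial $n_{b_i+1}(\cdot,a_i+2)$, which by Lemma~\ref{lem:String} has degree exactly $b_i+1$ with leading coefficient independent of $a_i$, so the $g+2$ columns have distinct degrees. I would select a $(g+1)\times(g+1)$ submatrix and compute its determinant, which by Lemma~\ref{lem:String} is a polynomial in $m_i=a_i+2$, and then run the reduction of Theorem~\ref{thm:determinant} (column and row operations driven by the String, Dilaton and KdV recursions, together with the symmetry of Proposition~\ref{prop:symmetry-1}) to show this polynomial is not identically zero. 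The difficulty, and the reason this cannot simply be quoted from Theorem~\ref{thm:determinant}, is that the relevant columns now sit at the high indices $b_i+1\approx ih$ rather than in the range $\{0,2,\ldots,g+1\}$, so the determinant computation must be redone for the shifted column block; its finitely many roots then give, for each slot $i$, a bounded set of bad values of $a_i$. Finally, a $\q$ fails the conclusion only if some $a_i$ is bad, or $a_e<3g-2$, or a grouping coincidence occurs; each such event pins one part to one of $O(eh)$ values and leaves the remaining parts free, so a union bound over the $e$ slots yields the stated bound on $|A|$.
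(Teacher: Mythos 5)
Your overall architecture is the same as the paper's: you identify the rows of $R_f(\q;g,n)$ with genus vectors $(g_0,\dots,g_e)$ summing to $g$ (your computation that the lower bound of Lemma~\ref{lem:p-cycles}(1) equals $g$ when $k=e+1$ is correct and matches what the paper uses implicitly), you factor the entries as $M_0(g_0)\prod_{i=1}^e n_{b_i+1}(g_i,a_i+2)$, and you reduce the claim to invertibility of the square blocks $N_i$, restricting the rows of the Kronecker product from the box $\{0,\dots,g\}^e$ to the simplex. The paper does exactly this via the auxiliary matrix $S(\q;g,n)$ and the comparison $\rank S(\q;g,n)\geq \rank(N_1\otimes\cdots\otimes N_e)$. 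Your explicit attention to the uniqueness-of-grouping issue (coincidences such as $b_i=a_i-a_{i+1}$ creating a second admissible distribution of the legs, to be swept into $A$ or handled by discarding those columns) is in fact more careful than the paper, which asserts the rank comparison without comment; this repair is legitimate and costs nothing asymptotically.

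The genuine gap is exactly where you write ``this is the main obstacle'': you never prove that the shifted blocks $M_{ih-h+1}(a_i+2;g)$ have nonvanishing determinant, you only promise to ``run the reduction of Theorem~\ref{thm:determinant}'' for the new column set. That reduction does not transfer: it exploited the presence of the low columns $j=0,2,\dots,g+1$, whose staircase of degrees permits the successive column eliminations producing entries $a_jh^j+b_{j-1}(m)h$; for a block of columns $j=k,\dots,k+g$ with $k\approx ih$ all column degrees are large and no analogous normal form is available. Moreover you do not need the full root structure of Theorem~\ref{thm:determinant}, only nonvanishing together with a degree bound. The paper's actual device, which your plan omits, is a one-line specialization: evaluate $d^k_g(m)=\mathrm{Det}\bigl(M_k(m;g)\bigr)$ at $m=k+g$. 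Since $k=(i-1)h+1\geq g+3$, we have $k+g\geq 2h'$ for every row genus $h'\leq g$, so Proposition~\ref{prop:symmetry-1} converts the columns $n_j(h',k+g)$, $j=k,\dots,k+g$, into $n_{k+g-j}(h',k+g)$, i.e.\ into $n_0,\dots,n_g$ evaluated at a fixed $m$; by Lemma~\ref{lem:String} these are polynomials in $h'$ of exact degrees $0,1,\dots,g$ with nonzero leading coefficients, so $d^k_g(k+g)$ is a Vandermonde-by-degree determinant and is nonzero. Combined with $\deg d^k_g\leq \frac{(g+1)(2k+g)}{2}$ (again Lemma~\ref{lem:String}), this bounds the bad values of $a_i$ per slot by $(i-\frac{1}{2})h^2$, which is precisely where the stated size $\frac{(eh)^2}{2}$ of $A$ comes from; your ``pins one part to one of $O(eh)$ values'' undercounts this (it is $O(eh^2)$ per slot), though that does not affect the asymptotics. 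Without the specialization step or some substitute proof of nonvanishing, your argument for $r_f(\q;g,n)=\binom{g+e}{e}$ is incomplete at its decisive point.
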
 
\begin{proof}
Consider the matrix $S(\q;g,n)$ (containing $R_f(\q;g,n)$ as a sub-matrix) 
whose rows are in correspondence with all 
assignments $(g_0,g_1,...,g_e)$ to the $(e+1)$ components of the combinatorial
cycle, without any restriction on their sum, i.e. we consider all tuples 
$\p=((m_0,g_0),...,(m_e,g_e))$ such that $a_i=3g_i-3+m_i$ and $0\leq g_i\leq g$. 
The matrix $R_f(\q;g,n)$ is a sub-matrix of $S(\q;g,n)$ while they both have the 
same number of columns. As a result, if the row rank of 
$S(\q;g,n)$ is full, so is the row rank of $R(\q;g,n)$.\\

Let $M_k(m;g)$ denote the sub-matrix of $N(m;g)$ which consists of the columns 
corresponding to the values $j=k,k+1,...,k+g$.
The matrix $S(\q;g,n)$ has at least the same rank as the matrix 
$N_1\otimes N_2\otimes ...\otimes N_e$,
where $N_1=M(m;g)$ and for $i>1$ 
$N_i$ is the matrix $M_{ih-h+1}(a_i+2;g)$.  The determinant 
$d^k_g(m)=\Det(M_{k}(m;g))$ is a polynomial with 
$$\deg\left(d^k_g(m)\right)\leq \frac{(g+1)(2k+g)}{2}$$
by Lemma~\ref{lem:String}. Moreover, Proposition~\ref{prop:symmetry-1}
implies that 
\begin{displaymath}
d^k_g(k+g)=(-1)^{g\choose 2}\Det\left(\begin{array}{cccc}
n_0(0,k+g)&n_1(0,k+g)&\hdots &n_g(0,k+g)\\
n_0(1,k+g)&n_1(1,k+g)&\hdots &n_g(1,k+g)\\
\vdots       &\vdots       &\ddots &\vdots\\
n_0(0,k+g)&n_1(0,k+g)&\hdots &n_g(0,k+g)\\
\end{array}\right).
\end{displaymath}
Since the degree of $n_i(h,m)$ is $i$, the right-hand-side of the above equality 
is a Van-der-Monde matrix, and the determinant is non-zero. The polynomial 
$d^k_g(m)$ is thus non-trivial and has at most $\frac{(g+1)(2k+g)}{2}$ roots.
Let $A_g^k$ denote the set of integer roots of $d_g^k(m)$. 
Thus, $N_i$ is invertible unless $a_i+2$ belongs $A_g^{ih-h+1}$, and
$$|A_g^{ih-h+1}|=\frac{(g+1)(2ih-2h+2+g)}{2}\leq (i-\frac{1}{2})h^2.$$ 
The set of partitions $(a_0>a_1>...>a_e)\in\PP_h(d;e+1)$ such that 
$a_i\in A_g^{ih-h+1}$ is at most of size 
$$|\PP_h(d;e)||A_g^{ih-h+1}|\leq |\PP(d;e)|h^2(i-\frac{1}{2}).$$
Consequently, for $\q$ outside a set of size
$$\sum_{i=2}^e(i-\frac{1}{2})|\PP(d;e)|h^2=\frac{(eh)^2}{2}|\PP(d;e)|$$
every $N_i$ is a full-rank matrix and the rank of $R_f(\q;g,n)$ is  equal 
to the number of its rows, i.e. ${g+e\choose e}$.
\end{proof} 

Since $|\PP(d;e)|$ is asymptotic to ${n+e-1\choose e-1}/e!$,  
Proposition~\ref{prop:asymptotic-lemma} implies that the rank of $T(d;g,n)$ is 
asymptotic to 
$$|\PP_h(d;e+1)|{g+e\choose e}\simeq \frac{{n+e\choose e}{g+e\choose e}}{(e+1)!}.$$
In order to complete a computation of the asymptotic behaviour of $r(d;g,n)$
it suffices to study the difference between the rank of $T(d;g,n)$ and the rank of 
$S(d;g,n)$.\\

\begin{prop}\label{prop:asymptotic-lemma-2}
With the above notation
$$\lim_{n\ra \infty}\frac{\rank(E_i(d;g,n))}{{n+e\choose e}}=0.$$
\end{prop}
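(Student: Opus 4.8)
The goal is to show that the off-diagonal contributions $E_i(d;g,n)$ to $S(d;g,n)$ are asymptotically negligible compared to the main term, so that the rank of $S(d;g,n)$ agrees asymptotically with the rank of the upper-triangular part $T(d;g,n)$. The plan is to bound $\rank(E_i(d;g,n))$ by a quantity that grows strictly more slowly than ${n+e\choose e}$ as $n\to\infty$, while $e$ and $g$ stay fixed.

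\medskip

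First I would recall the combinatorial constraint that makes the entry of $E_i(d;g,n)$ in row $\p\in\QQ(\q'';g,n)$ and column $\q'\in P(\q)$ nonzero. By construction $\q'=(a_0,a_1-b_1,\ldots,a_e-b_e,b_e,\ldots,b_1)$ refines $\q(\p)=\q''$, where $\q''$ is a partition \emph{shorter} than $\q'$ arising from a genuine refinement collision: $\q''=(a_0\geq\cdots\geq a_{i-1}\geq a_i-b_i\geq c_{i+1}\geq\cdots\geq c_e)$ for some index $i$ and positive integers $c_{i+1},\ldots,c_e$. The key structural point is that $\q''$ is indexed by a partition in $\PP_h(d;e)$ (one fewer part than $\PP_h(d;e+1)$) together with the residual data recording how the last few entries were merged. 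Because the rows of $E_i(d;g,n)$ are labelled by $Q=\bigcup_{\q\in\PP_h(d;e)}\QQ(\q;g,n)$, the number of rows is controlled by $|\PP_h(d;e)|$ times the number ${g+e\choose e}$ of genus assignments, and the former is asymptotic to a constant times ${n+e-1\choose e-1}$.

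\medskip

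The central estimate is therefore a counting one: the rank of $E_i(d;g,n)$ is bounded above by its number of rows, which is asymptotic to
\begin{displaymath}
|\PP_h(d;e)|\,{g+e\choose e}\ \simeq\ \frac{{n+e-1\choose e-1}}{e!}\,{g+e\choose e}.
\end{displaymath}
Since $e\geq 1$ is fixed, ${n+e-1\choose e-1}$ is a polynomial in $n$ of degree $e-1$, whereas the normalizing denominator ${n+e\choose e}$ is a polynomial of degree $e$. Dividing, one obtains
\begin{displaymath}
\frac{\rank(E_i(d;g,n))}{{n+e\choose e}}\ \leq\ \frac{|\PP_h(d;e)|\,{g+e\choose e}}{{n+e\choose e}}\ \longrightarrow\ 0
\end{displaymath}
as $n\to\infty$, because the ratio of a degree-$(e-1)$ polynomial to a degree-$e$ polynomial tends to zero. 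This immediately yields the proposition, and summing over the finitely many indices $i=1,\ldots,e$ shows that $\rank\big(\sum_i E_i(d;g,n)\big)\leq\sum_i\rank(E_i(d;g,n))$ is likewise negligible, so $\rank(S(d;g,n))$ and $\rank(T(d;g,n))$ differ by a term that is $o\big({n+e\choose e}\big)$.

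\medskip

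The main obstacle I anticipate is verifying cleanly that every nonzero row of $E_i(d;g,n)$ really is indexed by a partition of length $e$ rather than $e+1$, i.e. that the refinement $\q''$ genuinely drops a part. This is exactly the content of the displayed description $\q''=(a_0\geq\cdots\geq a_i-b_i\geq c_{i+1}\geq\cdots\geq c_e)$ preceding the statement, where the collision $\q''<\q$ forces one of the split pieces to recombine; I would want to confirm that this merging cannot accidentally preserve length $e+1$ while still giving $\q''<\q$ in the lexicographic order, which follows because any length-$(e+1)$ refinement of $\q'$ that is dominated by $\q$ must coincide with $\q$ itself by the ordering conventions. Once the row-count is pinned to $|\PP_h(d;e)|\cdot{g+e\choose e}$, the remaining polynomial-degree comparison is routine and the limit is forced.
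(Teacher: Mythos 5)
Your proof breaks at the row count for $E_i(d;g,n)$, which is the entire content of the proposition. The rows of $E_i$ are the rows of $S(d;g,n)$, i.e.\ they are indexed by the multi-sets $\p$ with $\q(\p)\in\PP_h(d;e+1)$ --- partitions of length $e+1$, not $e$. (In the line defining $Q$ the symbol ``$\PP_h(d;e)$'' must be read as $\PP_h(d;e+1)$: otherwise $S(d;g,n)$ could not contain the diagonal blocks $R_f(\q;g,n)$ for $\q\in\PP_h(d;e+1)$, and $\rank\,T(d;g,n)$, which by Proposition~\ref{prop:asymptotic-lemma} is asymptotically $\binom{n+e}{e}\binom{g+e}{e}/(e+1)!$, would be bounded by your row count of order $\binom{n+e-1}{e-1}$ --- a contradiction internal to the setup.) Moreover, the rows on which $E_i$ is genuinely nonzero are plentiful: the bad row blocks are the partitions $\q''=(a_0\geq\cdots\geq a_{i-1}\geq a_i-b_i\geq c_{i+1}\geq\cdots\geq c_e>0)$, which have $e+1$ parts (the pieces $b_i,a_{i+1}-b_{i+1},\dots,a_e-b_e,b_e,\dots$ are merged into the $c_j$'s, they do not disappear) and sweep out roughly $n^{e}$ many length-$(e+1)$ partitions of $d$. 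Your attempted repair in the last paragraph --- that a length-$(e+1)$ coarsening of $\q'$ lying below $\q$ must equal $\q$ --- is false, and $\q''$ itself is the counterexample: it is coarsened from $\q'$, it is lexicographically smaller than $\q$ because its entry in position $i$ is $a_i-b_i<a_i$, and it differs from $\q$ whenever $b_i>0$. So the trivial estimate of $\rank\,E_i$ by the number of rows only yields $O\bigl(\binom{n+e}{e}\bigr)$, which is useless after dividing by $\binom{n+e}{e}$.

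The missing idea --- and the actual content of the paper's proof --- is that although $E_i$ has on the order of $\binom{n+e}{e}$ rows, it has very few \emph{distinct} rows. One defines the merge map $f_i$ sending $\p=\{(g_j,m_j)\}_{j=0}^{e}$ to the multi-set in $\QQ(d;g,n-1)$ obtained by replacing $(g_0,m_0)$ and $(g_i,m_i)$ with $(g_0+g_i,m_0+m_i-3)$. In any nonzero entry of $E_i$, the components carrying the parts $a_0$ and $a_i-b_i$ contribute only one-point top $\psi$-integrals, which after the $\frac{1}{\Lambda(\p)}$ normalization equal $1$; hence the normalized row indexed by $\p$ depends only on $f_i(\p)$, and two rows with $f_i(\p)=f_i(\p')$ are identical. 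This gives $\rank\,E_i(d;g,n)\leq |\QQ(d;g,n-1)|$, which is asymptotic to $\binom{g+e-1}{e-1}\binom{n+e}{e-1}/e!$, a polynomial of degree $e-1$ in $n$. Only at this point does your (correct) closing degree comparison apply: the degree-$(e-1)$ bound on the number of \emph{distinct} rows, divided by the degree-$e$ quantity $\binom{n+e}{e}$, tends to $0$.
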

\begin{proof}
Define a function $f_i:Q\subset \QQ(d;g,n)\ra \QQ(d;g,n-1)$ as follows. Let 
$\p=\{(g_i,m_i)\}_{i=0}^{e}\in Q$ be a multi-set with $3g_i+m_i>3g_j+m_j$ if 
$i<j$. Define 
\begin{displaymath}
\begin{split}
&f_i:Q\subset \QQ(d;g,n)\ra \QQ(d;g,n-1)\\
&f_i(\p):=\left((g_0+g_i,m_0+m_i-3),(g_1,m_1),...,\widehat{(g_{i},m_{i})},\hdots,(g_e,m_e)
\right),
\end{split}
\end{displaymath}
where the hat over $(g_i,m_i)$ means that it is omitted from the sequence.
Let $Q_i=f_i(Q)\subset \QQ(d;g,n-1)$. Suppose that  
$$\p=\Big\{(g_i,m_i)\Big\}_{i=0}^{e+1}, \p'=\Big\{(g_i',m_i')\Big\}_{i=0}^{e+1},\ \ 
\text{and }\   f_i(\p)=f_i(\p').$$
 Furthermore, let 
$3g_j+m_j\geq 3g_k+m_k$ and $3g_j'+m_j'\geq 3g_k'+m_k'$ if $j<k$. For every 
$\q=(a_0,a_1-b_1,...,a_e-b_e,b_e,...,b_1)$ 
such that $\langle \q,\p\rangle$ is non-zero in $E_i(d;g,n)$,
$$\q(\p)=(a_0\geq a_1\geq ...\geq a_{i-1}\geq a_i-b_i\geq c_{i+1}\geq ...\geq c_e>0),$$
for some integers $c_{i+1},...,c_e$. Let 
\begin{displaymath}
\ov\q=\left (a_{i+1}-b_{i+1},...,a_e-b_e, b_e, ..., b_{i}\right),
\ \ \ \ov\p=\left((g_{i+1},m_{i+1}),\hdots (g_e,m_e)\right).
\end{displaymath} 
Then, 
\begin{displaymath}
\begin{split}
\langle \q,\p\rangle&=\frac{1}{24^{g_0+g_i}\times 
g_0!\times g_i!}
\left(\prod_{j=1}^{i-1}\int_{\Modbar_{g_j,m_j+2}}\psi_1^{b_j+1}\psi_2^{a_j-b_j+1}\right)
\left\langle \ov\q,\ov\p
\right\rangle\\
&=\langle \q',\p\rangle.
\end{split}
\end{displaymath} 
Thus, the rows in $E_i(d;g,n)$
which correspond to $\p$ and $\p'$ are identical. Consequently, 
the rank of $E_i(d;g,n)$ is 
bounded above by $|Q_i|$, which is in turn less than or equal to the cardinality 
of $\QQ(d;g,n-1)$. But the latter cardinality is asymptotic to 
\begin{displaymath}
\frac{{g+e-1\choose e-1}{n+e\choose e-1}}{e!}.
\end{displaymath}
The proposition follows immediately.
\end{proof}

\begin{thm}
The  rank of 
the kappa ring $\kappa_c^*(\Mgnbar)$ in codimension $e$, as the number 
$n$ of the marked points becomes large,
is asymptotic to 
$$\frac{{n+e \choose e}.{g+e\choose e}}{(e+1)!}$$
\end{thm}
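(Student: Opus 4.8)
The plan is to assemble the asymptotic rank of $\kappa_c^*(\Mgnbar)$ from the three structural results established in this section: the block-triangular decomposition $S(d;g,n)=T(d;g,n)+\sum_{i=1}^e E_i(d;g,n)$, the count of full-rank diagonal blocks in Proposition~\ref{prop:asymptotic-lemma}, and the negligibility of the off-diagonal correction in Proposition~\ref{prop:asymptotic-lemma-2}. First I would fix $g$ and the codimension $e=3g-3+n-d$ and recall that, by Lemma~\ref{lem:rank-general}, the rank of the triangular part obeys $\rank(T(d;g,n))\geq \sum_{\q\in\PP_h(d;e+1)}r_f(\q;g,n)$, so the main contribution to the rank comes from summing the diagonal block ranks $r_f(\q;g,n)$ over $\q\in\PP_h(d;e+1)$.

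Next I would extract the leading asymptotics. By Proposition~\ref{prop:asymptotic-lemma}, all but a subset $A$ of size $\frac{(eh)^2}{2}|\PP(d;e+1)|$ of the partitions $\q\in\PP_h(d;e+1)$ have $r_f(\q;g,n)=\binom{g+e}{e}$. Since $|\PP_h(d;e+1)|$ is itself asymptotic to $\binom{n+e}{e}/(e+1)!$ and the exceptional set $A$ has strictly smaller order in $n$ (it scales like $|\PP(d;e+1)|$, one power of $n$ below $|\PP_h(d;e+1)|$ up to constants tied to $g,e$), the contribution of $A$ washes out in the limit. Hence
\begin{displaymath}
\rank(T(d;g,n))\sim |\PP_h(d;e+1)|\binom{g+e}{e}\sim \frac{\binom{n+e}{e}\binom{g+e}{e}}{(e+1)!}.
\end{displaymath}
This already supplies the desired lower bound on $r(d;g,n)=\rank(S(d;g,n))$, since the triangular part is a submatrix obtained by keeping diagonal blocks.

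For the matching upper bound I would use Proposition~\ref{prop:asymptotic-lemma-2}. Writing $S=T+\sum_{i=1}^e E_i$ and using subadditivity of rank, $\rank(S)\leq \rank(T)+\sum_{i=1}^e\rank(E_i)$. Each $\rank(E_i(d;g,n))$ is $o\!\left(\binom{n+e}{e}\right)$ by that proposition, and there are only $e$ such terms with $e$ fixed, so the entire correction is lower order. Combining the two bounds forces
\begin{displaymath}
\rank(S(d;g,n))\sim \frac{\binom{n+e}{e}\binom{g+e}{e}}{(e+1)!}.
\end{displaymath}
Finally, since $r(d;g,n)$ computes the rank of $\kappa_c^d(\Mgnbar)$ by Proposition~\ref{prop:main-1} and $S$ is the relevant submatrix of $R(d;g,n)$ capturing the dominant behaviour, the rank of the combinatorial kappa ring in codimension $e$ has the stated asymptotic as $n\to\infty$.

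I expect the main obstacle to be confirming that the lower bound from $T$ and the upper bound are genuinely squeezing the same quantity, that is, verifying that the submatrix $S(d;g,n)$ actually captures the full asymptotic rank of $R(d;g,n)$ rather than merely a sub-block. The delicate point is bookkeeping the orders of growth in $n$: one must check that $|\PP_h(d;e+1)|$ and $|\PP(d;e+1)|$ differ by exactly one power of $n$ (so $A$ is negligible), and that the cardinality of $\QQ(d;g,n-1)$ controlling $\rank(E_i)$ grows like $\binom{n+e}{e-1}$, one power of $n$ below the target. Once these leading-order comparisons are pinned down, the squeeze argument closes immediately; the combinatorial rank estimates themselves are already furnished by the two preceding propositions.
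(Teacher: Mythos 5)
Your outline assembles the right ingredients but wires both halves of the squeeze to the wrong inequalities, and each wiring hides a genuine gap. For the lower bound you assert $\rank(S(d;g,n))\geq \rank(T(d;g,n))$ ``since the triangular part is a submatrix obtained by keeping diagonal blocks'' --- but $T(d;g,n)$ is \emph{not} a submatrix of $S(d;g,n)$. The decomposition $S=T+\sum_{i=1}^e E_i$ is an additive splitting: the $E_i$ collect the below-diagonal entries $\frac{1}{\Lambda(\p)}\langle \q',\p\rangle$ with $\p\in\QQ(\q'';g,n)$, $\q''<f(\q')$, which exist precisely because $f$ violates condition (2) of Definition~\ref{def:fine-assignment}, and $T$ is what remains after these are subtracted off. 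A large rank of one summand implies nothing about the rank of the sum. The correct deduction --- and the actual reason Proposition~\ref{prop:asymptotic-lemma-2} is proved at all --- is $\rank(S)\geq \rank(T)-\sum_{i=1}^e\rank(E_i)$, obtained from $T=S-\sum_i E_i$ by subadditivity; that is, the negligibility of the $\rank(E_i)$ belongs to the \emph{lower} bound, whereas you deploy it only on the upper-bound side, where it does no work.

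On the upper-bound side the gap is the one you yourself flag as ``the main obstacle'' and then leave unresolved: your inequality $\rank(S)\leq \rank(T)+\sum_i\rank(E_i)$ bounds the wrong matrix, since the theorem concerns $r(d;g,n)=\rank(R(d;g,n))$ (via Proposition~\ref{prop:main-1}) and $S$ is a proper submatrix of $R$ (rows restricted to $Q$, columns to $P$); no upper bound on $\rank(S)$ controls $r(d;g,n)$, and in any case you have no upper bound on $\rank(T)$, since Lemma~\ref{lem:rank-general} is one-sided. The paper closes this trivially and bypasses $S$ entirely: $\rank(R(d;g,n))$ is at most its number of rows, $|\QQ(d;g,n)|$, which is computed at the start of Section 6 to be asymptotic to ${n+e\choose e}{g+e\choose e}/(e+1)!$, matching the lower bound exactly. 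One further caution: the negligibility of the exceptional set in Proposition~\ref{prop:asymptotic-lemma} holds because (per its proof) that set has size $O(|\PP(d;e)|)=O\left({n+e-1\choose e-1}\right)$, one power of $n$ below the main term; taken at face value, a set of size $\frac{(eh)^2}{2}|\PP(d;e+1)|$ is of the \emph{same} order as $|\PP_h(d;e+1)|$ and would not wash out on its own, so your parenthetical justification needs this correction. With these repairs --- Proposition~\ref{prop:asymptotic-lemma-2} moved to the lower bound and the row count of the full matrix $R(d;g,n)$ supplying the upper bound --- your outline becomes the paper's proof.
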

\begin{proof}
 Proposition~\ref{prop:asymptotic-lemma}
implies that  asymptotically, the rank is greater than or equal to 
$$|\PP_h(d;e+1)|{g+e\choose e}-\sum_{i=1}^e \rank(E_i(d;g,n)).$$
By Proposition~\ref{prop:asymptotic-lemma-2}, the matrices $E_i(d;g,n)$ do not 
change the asymptotic, and $r(3g-3+n-e;g,n)$ is asymptotically greater than or equal to 
$$\frac{{n+e \choose e}.{g+e\choose e}}{(e+1)!}.$$
 Since  the number of rows in
$R(d;g,n)$ follows the same asymptotic behaviour the proof is complete.
\end{proof}

\end{document}